\documentclass[a4paper,twoside,10pt]{article}
\usepackage[a4paper,left=2cm,right=2cm, top=3cm, bottom=3cm]{geometry}
\usepackage{cuted}

\usepackage{enumerate}
\usepackage{amsthm}%
\usepackage{mathrsfs}
\usepackage{mathtools}
\usepackage{accents}
\usepackage{orcidlink}
\usepackage{graphicx}
\usepackage{amscd,amsmath,amssymb,mathrsfs,bbm,listings}
\usepackage{cancel}
\usepackage{tikz}
\usetikzlibrary{decorations.pathreplacing}
\usepackage{epstopdf}
\epstopdfsetup{
    suffix=,
}
\graphicspath{{Figs/}}
\usepackage{amsmath}
\usepackage{footmisc}
\usepackage{amsthm}
\usepackage{amssymb}
\usepackage{stmaryrd}
\SetSymbolFont{stmry}{bold}{U}{stmry}{m}{n}
\usepackage{float}
\usepackage{bigints}
\usepackage{cite}
\usepackage{color}
\usepackage[abs]{overpic}
\usepackage[font=footnotesize,labelfont=bf]{caption}
\usepackage{cases}
\usepackage{tikz}
\usepackage{algorithmic}
\usepackage{rotating}
\usepackage{blkarray}
\usetikzlibrary{matrix,calc,arrows}
\usepackage{pdfcomment}
\usepackage{filecontents}
\usepackage{pgfplots}
\usepackage{subcaption}
\usepackage{soul,xcolor}
\usepackage{enumitem}  
\usepackage{verbatim}
\usepackage{graphicx}
\usepackage{alphabeta}
\usepackage{mwe}
\usepackage{algorithm}
\usepackage{array}

\newtheorem{theorem}{Theorem}[section]
\newtheorem{lemma}[theorem]{Lemma}
\newtheorem{proposition}[theorem]{Proposition}
\newtheorem{assumption}{Assumption}[section]

\newtheorem{remark}[theorem]{Remark}

\newcommand{\eremk}{\hbox{}\hfill\rule{0.8ex}{0.8ex}}

\newcolumntype{C}{>{\centering\arraybackslash}p{2.5cm}}
\numberwithin{equation}{section}

\definecolor{darkblue}{rgb}{0.0, 0.0, 0.8}
\hypersetup{
    colorlinks=true,
    citecolor=red,
    linkcolor=darkblue,
    pdfborder={0 0 0}
}

\DeclareMathOperator*{\essinf}{ess\,inf}
\newcommand{\Ccoer}{C_{\mathrm{coer}}}
\newcommand{\Csob}{C_{\mathrm{Sob}}}

\newcommand{\N}{\mathbb{N}}
\newcommand{\R}{\mathbb{R}}
\newcommand{\bx}{\boldsymbol{x}}
\newcommand{\by}{\boldsymbol{y}}
\newcommand{\bn}{\boldsymbol{n}}

\newcommand{\Norm}[2]{\|#1\|_{#2}}
\newcommand{\Seminorm}[2]{|#1|_{#2}}

\newcommand{\dpt}{\partial_t}
\newcommand{\dptau}{\partial_\tau}
\newcommand{\QT}{Q_T}

\newcommand{\bnOmega}{\bn_{\Omega}}
\newcommand{\bnK}{\bn_{K}}

\newcommand{\D}{\boldsymbol{D}}

\newcommand{\dext}{\text{d}_{\mathrm{upd}}}
\newcommand{\Dmax}{D_{\mathrm{max}}}

\newcommand{\vz}{\boldsymbol{z}}

\newcommand{\vsigma}{\boldsymbol{\sigma}}

\newcommand{\vr}{\boldsymbol{r}}

\newcommand{\brho}
{\boldsymbol{\rho}}
\newcommand{\bff}{\boldsymbol{f}}

\newcommand{\up}{u_p}
\newcommand{\uq}{u_q}

\newcommand{\fp}{f_p}
\newcommand{\fq}{f_q}

\renewcommand{\sp}{s_p}
\newcommand{\sq}{s_q}

\renewcommand{\wp}{w_p}
\newcommand{\wq}{w_q}

\newcommand{\In}{I_n}
\newcommand{\tn}{t_n}
\newcommand{\tnmo}{t_{n - 1}}
\newcommand{\tnpo}{t_{n + 1}}
\newcommand{\Th}{\mathcal{T}_h}
\newcommand{\Thm}{\mathcal{T}_{h_m}}
\newcommand{\Tt}{\mathcal{T}_{\tau}}
\newcommand{\nmo}{(n - 1)}
\newcommand{\n}{(n)}
\newcommand{\npo}{(n + 1)}

\newcommand{\kpo}{(k + 1)}

\newcommand{\LDG}{{\mathrm{LDG}}}
\newcommand{\nablaLDG}{\nabla_{\LDG}}
\newcommand{\divLDG}{\mathrm{div}_{\LDG}}
\newcommand{\DG}{{\mathrm{DG}}}

\newcommand{\dx}{\, \mathrm{d}\boldsymbol{x}}
\newcommand{\dtau}{\, \mathrm{d}\tau}

\newcommand{\dt}{\, \mathrm{d}t}
\newcommand{\dV}{\, \mathrm{d}V}

\newcommand{\Wh}{\mathcal{W}^{\,\ell}(\Th)}
\newcommand{\Whm}{\mathcal{W}^{\,\ell}(\mathcal{T}_{h_m})}
\newcommand{\Zh}{\boldsymbol{\mathcal{Z}}^{\,\ell}(\Th)}
\newcommand{\Rh}{\boldsymbol{\mathcal{R}}^\ell(\Th)}
\newcommand{\Rhm}{\boldsymbol{\mathcal{R}}^\ell(\Thm)}
\newcommand{\vh}{v_h}
\newcommand{\wh}{w_{h}}

\newcommand{\whp}{w_{p, h}}
\newcommand{\whq}{w_{q, h}}
\newcommand{\zh}{\boldsymbol{z}_{h}}

\newcommand{\zhp}{\boldsymbol{z}_{p,h}}
\newcommand{\zhq}{\boldsymbol{z}_{q,h}}
\newcommand{\sigmah}{\boldsymbol{\sigma}_{h}}

\newcommand{\sigmahp}{\boldsymbol{\sigma}_{p,h}}
\newcommand{\sigmahq}{\boldsymbol{\sigma}_{q,h}}

\newcommand{\rh}{\boldsymbol{r}_{h}}
\newcommand{\rhp}{\boldsymbol{r}_{p,h}}
\newcommand{\rhq}{\boldsymbol{r}_{q,h}}
\newcommand{\phih}{\boldsymbol{\phi}_h}

\newcommand{\phiqh}{\boldsymbol{\phi}_{q,h}}

\newcommand{\lambdah}{\lambda_h}
\newcommand{\psim}{\psi_m}
\newcommand{\psihm}{\psi_{h_m}}
\newcommand{\bphi}{\boldsymbol{\phi}}
\newcommand{\bphim}{\boldsymbol{\phi}_{m}}

\newcommand{\psih}{\psi_h}

\newcommand{\uph}{u_{p,h}}
\newcommand{\uqh}{u_{q,h}}

\newcommand{\whstar}{w_{\star, h}}
\newcommand{\zhstar}{\boldsymbol{z}_{\star,h}}
\newcommand{\sigmahstar}{\boldsymbol{\sigma}_{\star,h}}
\newcommand{\sigmahmstar}{\boldsymbol{\sigma}_{\star,m}}
\newcommand{\rhstar}{\boldsymbol{r}_{\star,h}}

\newcommand{\ustar}{u_{\star}}
\newcommand{\ustarh}{u_{\star,h}}
\newcommand{\fstar}{f_{\star}}
\newcommand{\sstar}{s_{\star}}

\newcommand{\vhp}{v_{p, h}}
\newcommand{\vhq}{v_{q, h}}
\newcommand{\vhstar}{v_{\star, h}}

\newcommand{\cvW}{\mathbf{W}}
\newcommand{\cvWstar}{\mathbf{W}_{\star,h}}
\newcommand{\cvWp}{\mathbf{W}_{p,h}}
\newcommand{\cvWq}{\mathbf{W}_{q,h}}
\newcommand{\cvZstar}{\mathbf{Z}_{\star,h}}

\newcommand{\cvSigmastar}{\mathbf{\Sigma}_{\star,h}}
\newcommand{\cvSigmap}{\mathbf{\Sigma}_{p,h}}
\newcommand{\cvSigmaq}{\mathbf{\Sigma}_{q,h}}
\newcommand{\cvRstar}{\mathbf{R}_{\star,h}}

\newcommand{\cvVstar}{\mathbf{V}_{\star,h}}
\newcommand{\cvVp}{\mathbf{V}_{p,h}}
\newcommand{\cvVq}{\mathbf{V}_{q,h}}

\newcommand{\MM}{\mathbb{M}}
\newcommand{\MMstar}{\MM_\star}
\newcommand{\GG}{\mathcal{G}}

\newcommand{\diffsigma}{\mathcal{D}_{\mathbf{\Sigma}_{\star}}\!\!}
\newcommand{\diffw}{\mathcal{D}_{\mathbf{W}_{\star}}\!\!}
\newcommand{\diffwp}{\mathcal{D}_{\mathbf{W}_{p}}\!\!}
\newcommand{\diffwq}{\mathcal{D}_{\mathbf{W}_{q}}\!\!}

\newcommand{\equilp}{{\mathcal E}_p}
\newcommand{\equilq}{{\mathcal E}_q}

\newcommand{\PiW}{\Pi_{\mathcal{W}}}
\newcommand{\PiR}{\boldsymbol{\Pi}_{\boldsymbol{\mathcal{R}}}}
\newcommand{\Pp}[2]{\mathbb{P}^{#1}(#2)}

\def\Fh{\mathcal{F}_h}
\def\Fho{\mathcal{F}_h^\mathcal{I}}
\def\Fhmo{\mathcal{F}_{h_m}^\mathcal{I}}
\def\FhN{\mathcal{F}_h^{\mathcal{N}}}

\newcommand{\mvl}[1]{\{ \!\!\{#1\}\!\!\}}  
\newcommand{\jump}[1]{\llbracket #1\rrbracket}

\newcommand{\mvlparamF}{\gamma_F}

\definecolor{newred}{rgb}{0.8, 0.0, 0.0}

\title{Structure-preserving local discontinuous Galerkin discretization \\ of conformational conversion systems}

\author{Paola F. Antonietti\,\orcidlink{0000-0002-2138-3878}\thanks{MOX-Dipartimento di Matematica, Politecnico di Milano, Piazza Leonardo da Vinci 32, Milan, 20133, Italy (\href{mailto:paola.antonietti@polimi.it}{paola.antonietti@polimi.it}, \href{mailto:mattia.corti@polimi.it}{mattia.corti@polimi.it})} 
\and
Mattia Corti\,\orcidlink{0000-0002-7014-972X}\footnotemark[1]
\thanks{Faculty of Mathematics, University of Vienna, Oskar-Morgenstern-Platz 1, 1090 Vienna, Austria (\href{mailto:mattia.corti@univie.ac.at}{mattia.corti@univie.ac.at},
\href{mailto:ilaria.perugia@univie.ac.at}{ilaria.perugia@univie.ac.at})}
\and
Sergio G\'omez\,\orcidlink{0000-0001-9156-5135}\thanks{Department of Mathematics and Applications, University of Milano-Bicocca, 20125 Milan, Italy (\href{mailto:sergio.gomezmacias@unimib.it}{sergio.gomezmacias@unimib.it})}
\thanks{IMATI-CNR ``E. Magenes", Via Ferrata 5, 27100 Pavia, Italy}
\and 
Ilaria Perugia\,\orcidlink{0000-0003-1368-2883}\footnotemark[2]
}

\date{}

\begin{document}
\maketitle

\begin{abstract}
\noindent We investigate a two-state conformational conversion system and introduce a novel structure-preserving numerical scheme that couples a local discontinuous Galerkin space discretization with the backward Euler time-integration method. The model is first reformulated in terms of auxiliary variables involving suitable nonlinear transformations, which allow us to enforce positivity and boundedness at the numerical level.
Then, we prove a discrete entropy-stability inequality, which we use to show the existence of discrete solutions, as well as to establish the convergence of the scheme by means of some discrete compactness arguments. 
As a by-product of the theoretical analysis, we also prove the existence of global weak solutions satisfying the system's physical bounds. 
Numerical results validate the theoretical results and assess the capabilities of the proposed method in practice.
\end{abstract}

\paragraph{Keywords.} Conformational conversion systems, semilinear reaction--diffusion system, structure-preserving discretizations, local discontinuous Galerkin method, molecule/particle dynamics.

\paragraph{Mathematics Subject Classification.} 
65M60, %
65M12, %
35K57, %
35Q92 %

\section{Introduction}

Conformational conversion systems are a class of coupled %
(semilinear) reaction–diffusion systems of partial differential equations (PDEs) that describe how multiple conformational states of molecules or particles change over space and time, including their ability to interconvert and diffuse. The population associated with each conformation is represented by one variable governed by its own PDE, where the diffusion terms model the spatial spreading, and the reaction terms describe production, elimination, and interconversion between conformations. 
Conformational conversion systems %
take into account three main mechanisms: i) state transitions, representing chemical or physical changes; ii) spatial dynamics, describing diffusion of each state; and iii) external forces, accounting for external inputs. 
These systems are commonly used across numerous biological, chemical, and physical processes, where elements of a spatially distributed system can change their internal structure (i.e., their conformational state), while simultaneously undergoing %
diffusion through space. 
For instance, in cell biology, they are used to describe protein conformational changes and how these propagate within a cell. In neuroscience, they are used to model the spread of misfolded proteins in neurodegenerative diseases \cite{fornari_spatially-extended_2020,matthaus_diffusion_2006,weickenmeier_physics-based_2019}. In chemical kinetics, they appear as spatially extended catalytic reaction models, while in materials science, they are used to describe phase transitions in innovative materials. Similar formulations have also been proposed for ecosystem modeling, such as resource–consumer systems with negligible resource competition, known as ``MacArthur-type" models \cite{cui_effect_2020,marsland_minimum_2020}, or for plant–water interaction dynamics in soil models, where they appear as two-component reaction–diffusion systems \cite{kabir_numerical_2022}.

Due to the nonlinear coupling between reaction kinetics, multiple interacting states, and diffusion occurring at different scales, the mathematical analysis of conformational conversion systems poses significant challenges. A key point is that, under suitable assumptions on the model's data, it is often possible to prove the positivity and boundedness of solutions, which are essential for ensuring physical consistency. 
Approximating the solution to conformational conversion systems adds its own challenges, as it demands 
schemes that preserve, at the discrete level, the key structural properties of the corresponding continuous model. In particular, positivity preservation is not automatically ensured by standard numerical schemes, even when it holds for the continuous PDE system. 
Consequently, the development of structure-preserving numerical methods is an active area of research. Notable contributions in the framework of numerical discretization approaches that can preserve physical bounds at the discrete level, within a variational setting,
are, e.g., the nodally bound-preserving finite element method~\cite{Barrenechea-Volker-Knobloch-2024,Amiri-Barrenechea-Pryer-2025}, the proximal Galerkin method~\cite{Keith-Surowiec-2024,Keith-Masri-Zeinhofer-2025,Fu_Keith_Masri:2025}, and structure-preserving schemes for cross-diffusion~\cite{Braukhoff-Perugia-Stocker2022,Gomez-Jungel-Perugia:2024} and  %
(advection)--diffusion--reaction problems~\cite{BonizzoniBraukhoffJungelPerugia_2020,Corti_Bonizzoni_Antonietti:2024,Antonietti-Corti-Gomez-Perugia_2026,Lemaire_Moatti:2024,Moatti:2023}.
In the literature, structure-preserving schemes %
for reaction--diffusion systems %
distinguish between \emph{reversible} (satisfying detailed balance or possessing a gradient-flow structure) and \emph{irreversible} systems. 
Recent works~\cite{Liu_Structure_2021,Liu_Convergence_2022,Fu_High_2023} focus on reversible reaction--diffusion systems, 
making use of an associated Lyapunov functional to reformulate the model as a minimization problem.
Such a structure is not available in our model, due to the presence of an irreversible conversion term in the reaction.

\paragraph{Novelty.} In this paper, we consider a two-state conformational conversion system and propose a novel structure-preserving scheme based on a local discontinuous Galerkin (LDG) space discretization coupled with the %
backward Euler time-integration scheme. 
The key point is to reformulate (via a suitable change of variables involving nonlinear transformations) the model problem as a system in terms of entropy variables, thereby ensuring solution positivity, boundedness of one of the two components, and an entropy-stability inequality at the discrete level. The entropy densities underlying these changes of variables are related to Legendre functions in the framework of proximal Galerkin methods~\cite{Keith-Surowiec-2024}.

Conformational conversion systems typically feature coupled dynamics across multiple states, whose solutions may develop sharp, time-dependent fronts. This motivates the use of a discontinuous Galerkin (DG) framework, which offers greater flexibility in mesh design and local approximation spaces than the standard $H^1$-conforming finite elements. 
More specifically, DG methods inherently support high-order, possibly elementwise varying, local polynomial approximations on nonmatching meshes of arbitrary shape, and are well-suited for adaptive refinement strategies, without imposing interelement continuity constraints. This flexibility enables accurate and efficient approximation of the underlying dynamics and effective front tracking.
In particular, among DG methods, the LDG approach~\cite{Cockburn_Shu:1998,Castillo_etal:2000} is especially well suited to the reformulation in terms of entropy variables, where the diffusion becomes nonlinear. Indeed, through the introduction of auxiliary variables, it allows us to design a method with the following properties: \emph{i)} nonlinearities are not embedded in spatial differential operators and interface terms, and can therefore be evaluated elementwise in a simple and efficient manner, endowing the method with an outstanding parallelizable structure; \emph{ii)} the discretization matrices for the resulting linear spatial differential operators are the standard ones, and need to be assembled only once at the beginning of the simulation; \emph{iii)} the chain rule required for entropy stability can be weakly imposed; and \emph{iv)} the block-diagonal mass matrices, which are typical of DG methods, allow 
for the algebraic elimination of all auxiliary variables, so the number of degrees of freedom does not increase with respect to the original problem. These properties result in a significant reduction in computational cost, particularly in three-dimensional problems, compared to other DG methods.

For the  proposed proposed structure-preserving backward Euler-LDG scheme, we
establish convergence of the discrete scheme (up to subsequences) under minimal regularity assumptions. As a byproduct of our analysis, we also prove the existence of global weak solutions satisfying the physical bounds of the model. 
A distinctive novelty of this work, compared to the framework in~\cite{Gomez-Jungel-Perugia:2024} for cross-diffusion systems, is that we get a ``degeneracy" in the entropy estimate, which provides
a bound in the~$L^2$ norm on~$\nabla \sqrt{q}$, instead of~$\nabla q$, being~$q$ one of the conformational variables. The ideas used to address the theoretical challenges resulting from this ``degeneracy" are key to extend the framework in~\cite{Gomez-Jungel-Perugia:2024} to a broader class of nonlinear models.

\paragraph{Structure of the manuscript.} The remainder of the paper is organized as follows: Section~\ref{sec:2} presents the model problem and the structure-preserving numerical method, and Sections~\ref{SEC::ANALYSIS} and~\ref{sec:convergence} present its theoretical analysis. More precisely, in Section~\ref{SEC::MODEL}, we introduce the conformational conversion system under investigation. 
In Section~\ref{SEC::REFORMULATION}, we reformulate the model using a suitable change of variables, which arises from the underlying entropy structure of the system,  and present its discretization using the proposed structure-preserving backward Euler-LDG method in Section~\ref{SEC::METHOD}. In Section~\ref{SEC::ENTROPY_CONT}, we prove an entropy stability bound of the system. Then, we derive a discrete analogue for our structure-preserving scheme in Section~\ref{SEC::ENTROPY_DISCR}, and use it to prove the existence of discrete solutions in Section~\ref{sec:EXISTENCE-DISCRETE}. Section~\ref{sec:convergence} is devoted to establishing the convergence of the structure-preserving scheme. First, we prove that the scheme converges to a regularized semidiscrete-in-time formulation as the mesh size~$h$ goes to zero; then, we analyze the convergence of such a formulation as~$(\varepsilon, \tau) \to (0, 0)$, being $\varepsilon$ and $\tau$ a suitable penalty term and the time integration step, respectively. Section~\ref{sec:results} discusses numerical results aimed at validating the theoretical results and assessing the proposed structure-preserving method in practice.  
In Section~\ref{sec: Conclusions}, we draw some conclusions and discuss further developments. Finally, Appendix~\ref{SEC::NEWTON} presents a detailed derivation of the linear systems that arise from applying Newton's method to the backward Euler-LDG method.

\section{Model problem and numerical approximation}\label{sec:2}

In this section, we first introduce the model problem (Section~\ref{SEC::MODEL}) and then reformulate it in a convenient form (Section~\ref{SEC::REFORMULATION}). Next, we define the structure-preserving backward Euler-LDG method to discretize it (Section~\ref{SEC::METHOD}), and conclude by presenting the matrix formulation of the fully discrete method (Section~\ref{SEC::MATRIX}). An explicit derivation of the linear systems that arise from applying Newton's method is postponed to Appendix~\ref{SEC::NEWTON}. Here and in the following, we use standard notation for~$L^p$, Sobolev, and Bochner spaces.

\subsection{The conformational conversion system}\label{SEC::MODEL}
We define the space--time cylinder~$\QT \coloneqq  \Omega \times (0, T)$, where~$\Omega \subset \R^d$ ($d\in \{2, 3\}$) is a polytopic domain with Lipschitz boundary~$\Gamma\coloneqq \partial \Omega$ and outward-pointing normal unit vector~$\bnOmega$, and~$T>0$ is some final time.
We consider the following system: find~$p:\QT \rightarrow \R$ and~$q : \QT \rightarrow \R$ such that
\begin{subequations}
\label{EQN::HETERODIMER}
\begin{alignat}{3}
\dpt p - \nabla \cdot (\D \nabla p) & = - p(\lambda_p + \mu_{pq} q) + \kappa_p & & \quad \text{ in } \QT,\\
\dpt q - \nabla \cdot (\D \nabla q) & = -q(\lambda_q - \mu_{pq} p) & & \quad \text{ in } \QT,\\
(\D \nabla p) \cdot \bnOmega = 0  \ \text{ and } \ (\D \nabla q) \cdot \bnOmega & = 0 & & \quad  \text{ on } \Gamma \times (0, T),\\
p(\cdot, 0) = p_{0} \ \text{ and } \ 
q(\cdot, 0) & = q_{0} & & \quad \text{ in } \Omega.
\end{alignat}
\end{subequations}
The unknowns~$p$ and~$ q$ represent the populations associated with the two conformations. As such, they must be nonnegative. 
We assume that the tensor~$\D= \D(\bx)\in \R^{d\times d}$, which characterizes the diffusion of both~$p$ and~$q$, belongs to~$L^{\infty}(\Omega)^{d\times d}$ and is uniformly positive definite in~$\Omega$: there exist~$\dext, \Dmax>0$ such that
\begin{equation}\label{eq:D}
\Norm{\D}{L^{\infty}(\Omega)^{d\times d}}=D_{\max}<\infty\qquad \text{and}\qquad
\forall \vz\in\R^d, \quad \vz^\top \D\vz
\ge\dext|\vz|^2 \quad \text{a.e. in}\ \Omega.
\end{equation}
The parameter~$\kappa_p>0$ is the production rate of~$p$, $\lambda_p>0$ and~$\lambda_q> 0$ are the clearance rates of~$p$ and~$q$, respectively, and~$\mu_{pq}>0$ is the conversion rate from~$p$ to~$q$.
For convenience, we set
\[
\Upsilon_{pq}\coloneqq \kappa_p \mu_{pq} - \lambda_p\lambda_q, \qquad
\equilp\coloneqq \frac{\kappa_p}{\lambda_p},\qquad \equilq\coloneqq \frac{\Upsilon_{pq}}{\lambda_q\mu_{pq}}.
\]
Traveling wave solutions exist under the assumption~$\Upsilon_{pq}>0$;~\cite[\S2.2]{matthaus_diffusion_2006}. For convenience, we set
\begin{equation}\label{EQ::HETER_FPFQ}
\fp(p, q) \coloneqq  -p(\lambda_p + \mu_{pq} q) + \kappa_p,  \qquad \fq(p, q) \coloneqq  -q(\lambda_q - \mu_{pq} p).
\end{equation}
For the initial conditions, we assume that
\begin{equation}\label{eq:ic}
\begin{split}
p_0 & \in \big\{\mu \in L^{\infty}(\Omega) \ : \ 0\le \mu
\le \equilp\quad \text{a.e.~in}\ \Omega \big\} \quad \text{and} \quad 
q_0 \in \big\{\mu\in L^\infty(\Omega) \ : \ \mu \geq 0 \quad \text{a.e.~in}\ \Omega \big\},
\end{split}
\end{equation}
so that 
\begin{equation}\label{EQ:HETER_BOUNDSCQ}
0\le p \le \equilp\quad \text{and} \quad q \ge 0 \qquad 
 \text{a.e.~in}\ Q_T;
\end{equation}
see~\cite[\S2.1]{Mattia}.

\begin{remark}[Initial datum~$p_0$]\label{REM::INITIAL_DATA}
The assumption~$p_0\le \equilp=\kappa_p/\lambda_p$ in $\Omega$ is motivated by the fact that, in this system, $\equilp$ is assumed to be the maximum of the population $p$ (which is associated with the unstable equilibrium).
\eremk
\end{remark}

\begin{remark}[The Fisher-Kolmogorov equation]\label{rem:FK}
For~$p\gg q$, neglecting the time derivative and the diffusion of~$p$, and using a Taylor approximation,
system~\eqref{EQN::HETERODIMER} reduces to the following Fisher-Kolmogorov equation in the rescaled variable~$c\coloneqq  q/q_M$, with~$q_M\coloneqq \Upsilon_{pq}/(\equilp\mu_{pq}^2)$: 
\begin{subequations}
\label{EQN::FISHER-KPP}
\begin{alignat}{3}
\dpt c - \nabla \cdot (\D \nabla c) & = \alpha\, c (1 - c) & & \quad \text{ in } \QT,\\
(\D \nabla c) \cdot \bnOmega & = \mathbf{0}  & & \quad  \text{ on } \Gamma \times (0, T),\\
c(\cdot, 0) & = c_0 & & \quad \text{ in } \Omega,
\end{alignat}
\end{subequations}
with~$\alpha\coloneqq \Upsilon_{pq}/{\lambda_p}$ and~$c_0 \coloneqq  q_0/q_M$; see~\cite[\S2]{Goriely}.
\eremk
\end{remark}
\begin{remark}[The heterodimer model]\label{rem:Heterodimer}
An example of a system of the form \eqref{EQN::HETERODIMER} is the so-called heterodimer model, which describes the spatial and temporal dynamics of protein conformational changes. The heterodimer model is relevant in the modeling of neurodegenerative diseases, e.g.~proteinopathies such as Alzheimer's, Parkinson's, and prion diseases~\cite{fornari_spatially-extended_2020,matthaus_diffusion_2006,weickenmeier_physics-based_2019}.
In this context, the unknowns~$p$ and~$q$ represent the quantities of healthy and misfolded proteins, respectively.
\eremk
\end{remark}

The following change of variables enforces the bounds in~\eqref{EQ:HETER_BOUNDSCQ} on %
$p$ and~$q$:
\begin{equation}\label{EQ::HETER_UPUQ}
p=u_p(w_p)\coloneqq \equilp\left(\frac{e^{w_p}}{1+e^{w_p}}\right), \qquad
q=u_q(w_q)\coloneqq \equilq e^{w_q}, 
\qquad w_p,w_q:Q_T\mapsto\R.
\end{equation}

This choice is naturally motivated by the underlying entropy structure of the system, which is at the core of the boundedness-by-entropy setting of~\cite{Jungel:2015}; see Remark~\ref{rem:math_entropy} below. Indeed, it corresponds to writing~$u_p=(s_p')^{-1}$, $u_q=(s_q')^{-1}$,
with the \emph{entropy density functions}~$s_p$ and~$s_q$ defined, respectively, by 
\begin{equation}\label{EQ::HETER_SPSQ}
s_p(p)\coloneqq  p\log p+\left(\equilp-p\right)\log\left(\equilp-p\right)+\max\{\equilp\log(2\equilp^{-1}),0\}\ge 0,\qquad s_q(q)\coloneqq  q(\log{} (\equilq^{-1} q) -1) + \equilq \ge 0,
\end{equation}
where~$\log=\log_e$, for which
\[
s_p'(p)=\log p -\log\left(\equilp-p\right)=\log\left(\frac{p}{\equilp- p}\right),
\qquad
s_q'(q)=\log \frac{q}{\equilq},
\] 
and
\[
s_p''(p)=\frac{1}{p(1-\equilp^{-1} p)},
\qquad
s_q''(q)=\frac{1}{q}.
\]

The changes of variables in~\eqref{EQ::HETER_UPUQ} can be rigorously defined using the functional setting in~\cite[App.~1]{Keith-Surowiec-2024}. To this end, we recall the following spaces (see~\cite[Prop.~A.7]{Keith-Surowiec-2024}):
\begin{alignat*}{3}
\mathrm{int}\, L_{(0,\equilp)}^\infty(\Omega) & \coloneqq  \big\{\mu \in L^{\infty}(\Omega) \ : \ \text{there exists~$\epsilon > 0$ such that~$\epsilon < \mu < \equilp - \epsilon$} \big\} = \up(L^{\infty}(\Omega)), \\
\mathrm{int}\, L_+^{\infty}(\Omega) & \coloneqq  \big\{\mu \in L^{\infty}(\Omega) \ : \ \text{there exists~$\epsilon > 0$ such that~$\mu > \epsilon$} \big\} = \uq(L^{\infty}(\Omega)). 
\end{alignat*}
Due to~\cite[Prop.~A.8]{Keith-Surowiec-2024}, we have that~$\up: H^1(\Omega) \cap L^{\infty}(\Omega) \to H^1(\Omega) \cap \mathrm{int}\, L_{(0,\equilp)}^\infty(\Omega)$ and~$\uq: H^1(\Omega) \cap L^{\infty}(\Omega) \to H^1(\Omega) \cap \mathrm{int}\, L_+^\infty(\Omega)$ are isomorphisms, and the following identities hold:
\begin{equation}\label{EQ::HETER_CHAINRULE}
\nabla w_p=s_p''(p)\nabla p, \quad
\nabla w_q=s_q''(q)\nabla q \qquad \forall \wp, \wq \in H^1(\Omega) \cap L^{\infty}(\Omega),
\end{equation}
which follow from the chain rule and the relations~$\wp=s_p'(p)$ and~$\wq=s_q'(q)$. 
Local versions of the identities in~\eqref{EQ::HETER_CHAINRULE} are employed in the definition of the proposed LDG method.

From assumption~\eqref{eq:ic} on the initial data, it follows that the initial entropy is bounded, namely
\begin{equation}\label{eq:initial_entropy}
\int_\Omega s_p(p_0)\dx +\int_\Omega s_q(q_0)\dx <+\infty.
\end{equation}

\begin{remark}[The boundedness-by-entropy setting]\label{rem:math_entropy}
Define~$\brho\coloneqq (p,q)$, $\bff(\brho)\coloneqq (f_p(p,q),f_q(p,q))$, $s(\brho)\coloneqq s_p(p)+s_q(q)$, and the set of physically meaningful values~$\mathcal{R}\coloneqq (0,\equilp)\times(0,+\infty)$.
With the specific entropy densities defined in~\eqref{EQ::HETER_SPSQ}, the problem in~\eqref{EQN::HETERODIMER} lies within the boundedness-by-entropy framework of~\cite{Jungel:2015} in a suitably relaxed sense as in~\cite{Jungel_Zurek:2021}.
In particular, the following properties hold:
\begin{enumerate}[label = \roman*), ref = \roman*)]
\item \label{prop:i} $s\in C^2(\mathcal{R};[0,+\infty))\cap C^0(\overline{\mathcal{R}};[0,+\infty))$,  with~$s':\mathcal{R}\to \R^2$ invertible;
\item \label{prop:ii} $s_p$ and~$s_q$ are convex functions and, for a positive constant~$c_q$, $s_q(q)\ge q-c_q$ for all~$q\in (0,+\infty)$;
\item \label{prop:iii} the initial condition~$\brho_0\coloneqq (p_0,q_0)$ is such that
$\int_\Omega s(\brho_0)\dx <+\infty$,
see~\eqref{eq:initial_entropy};
\item\label{prop:iv} there exist positive constants~$C_p$ and~$C_q$ such that
\[
\begin{split}
\vz^\top \left(s_p''(p)\D\right)\vz
&\ge C_p|\vz|^2 \qquad \forall\vz\in\R^d,\\
(\nabla q)^\top \left(s_q''(q)\D\right)\nabla q&\ge C_q|\nabla \sqrt{q}|^2,
\end{split}
\]
see~\eqref{EQ::HETER_UPD1} and~\eqref{EQ::HETER_SQRTQ} below;
\item \label{prop:v} there exists a positive constant~$C$ such that
$\bff(\brho)\cdot s'(\brho)\le C\left(1+s(\brho)\right)$,
see~Proposition~\ref{PROP::HETER_FBOUND}.
\end{enumerate}
From~\ref{prop:i}--\ref{prop:v}, we derive an entropy stability estimate for system~\eqref{EQN::HETERODIMER}; see Theorem~\ref{TH::HETER_STAB}.
\par
As discussed in~\cite[\S1.1]{Jungel:2015}, when condition~\ref{prop:v} is satisfied with~$C=0$, then~$S[\brho](t)\coloneqq \int_\Omega s\left(\brho(\bx,t)\right)\dx$ is a Lyapunov functional along the solutions to the system. 
In contrast, Proposition~\ref{PROP::HETER_FBOUND} below shows that, for our model, property~\ref{prop:v} holds with a strictly positive constant~$C$, since the production rate~$\kappa_p$, the conversion rate~$\mu_{pq}$, and~$\Upsilon_{pq}$ are assumed to be strictly positive.
As a consequence, $S[\brho]$ cannot be interpreted as a classical Lyapunov functional. Instead, it primarily serves  as a mathematical tool to guarantee boundedness and stability of solutions. 
\eremk
\end{remark}

\begin{remark}[Irreversibility and gradient-flow structure]
System~\eqref{EQN::HETERODIMER} does not exhibit a simple gradient-flow structure. Indeed, the reaction mechanism contains the irreversible conversion~$p\overset{\mu_{pq}}{\rightharpoonup}q$ without a reverse reaction. 
Consequently, compared to~\cite{Liu_Convergence_2022,Fu_High_2023}, the system does not satisfy a detailed-balance or reversibility condition that would typically allow one to 
identify a Lyapunov functional whose variational derivative reproduces the full dynamics.
 Specifically, since~$\partial_q f_p(p,q)=-\mu_{pq}p$ and~$\partial_p f_q(p,q)=\mu_{pq}q$, we have~$\partial_q f_p\ne \partial_p f_q$. %
 As a result, while the diffusion component alone lies within the classical gradient-flow framework, the full reaction--diffusion system~\eqref{EQN::HETERODIMER} cannot be expressed as a gradient flow with respect to the~$L^2$-metric. Additionally, the  conversion term $(-\mu_{pq} p q, \mu_{pq} p q)$ does not correspond to detailed balance, %
 which may prevent the possibility of a gradient-flow formulation in a metric space of Wasserstein- or Hellinger--Kantorovic-type measures\cite{maas_modeling_2020}.
\eremk
\end{remark}

\subsection{Auxiliary variables and problem reformulation}\label{SEC::REFORMULATION}
In view of the discretization introduced in Section~\ref{SEC::METHOD} below, we introduce the following auxiliary variables in~$Q_T$, whose motivation is discussed in detail in Remark~\ref{RMK::EXPLAIN} below:
\begin{subequations}
\label{EQ::HETERODIMER_VARIABLES}
\begin{align}
\label{EQ::HETER_PQ}
(w_p,w_q)\ \text{s.t.}\  (p,q) &\,= \left(u_p(w_p),u_q(w_q)\right), \\
\label{EQ::HETER_Z}
(\vz_p,\vz_q)  &\coloneqq  -(\nabla w_p,\nabla w_q), \\
\label{EQ::HETER_SIGMA1}
    \D s_p''(p)  \vsigma_p  &\coloneqq  - \D s_p''(p)  \nabla p = \D \vz_p, \quad \\
\label{EQ::HETER_SIGMA2}    
    \D s_q''(q) \vsigma_q  &\coloneqq  - \D s_q''(q)  \nabla q = \D \vz_q, \quad \\
    \label{EQ::HETER_Q}
    (\vr_p,\vr_q)  &\coloneqq  (\D \vsigma_p,\D\vsigma_q).
\end{align}
\end{subequations}
As~$s_p''$ is uniformly bounded away from zero by~$4\equilp^{-1}$,~$s_p''(p)\D$ is uniformly positive definite:
\begin{equation}\label{EQ::HETER_UPD1}
\vz^\top \left(s_p''(p)\D\right)\vz
\ge\left(\inf_{(\bx,t)\in Q_T}s_p''\left(p(\bx,t)\right)\right)
\,\dext|\vz|^2\ge 4\,\equilp^{-1}\dext|\vz|^2 \qquad \forall\vz\in\R^d.
\end{equation}
This is not the case for~$s_q''(q)\D$, since $s_q''$ is not bounded away from zero. However, we have
\begin{equation}\label{EQ::HETER_SQRTQ}
(\nabla q)^\top \left(s_q''(q)\D\right)\nabla q
=\frac{1}{q}(\nabla q)^\top \D\nabla q
\ge \dext\frac{|\nabla q|^2}{q}=4\, \dext |\nabla \sqrt{q}|^2,
\end{equation}
where, in the last step, we have used the identity~$|\nabla \sqrt{q}|^2 = \frac14 \frac{|\nabla q|^2}{q}$. A similar situation occurs for the Shigesada--Kawasaki--Teramoto (SKT) cross-diffusion system when the so-called~\emph{detailed-balance condition} does not hold; see, e.g., \cite{Chen_Jungel_Wang:2023}. 

\begin{remark}[Possible degeneracy]\label{RMK::SIGMA}
The first identities in equations~\eqref{EQ::HETER_SIGMA1} and~\eqref{EQ::HETER_SIGMA2} impose~$\vsigma_p= -\nabla p$ and~$\vsigma_q=-\nabla q$. This follows from the invertibility of $\D$ and the strict positivity of~$s_p''$ and~$s_q''$. However, while~$s_p''(p)\D$ is uniformly positive definite, see~\eqref{EQ::HETER_UPD1}, this is not true for~$s_q''(q)\D$ and, for large values of $q=u_q(w_q)$, the equation in~\eqref{EQ::HETER_SIGMA2} may degenerate. 
The lack of uniform positive definiteness of~$s_q''(q)\D$ is due to the choice of the entropy. This aspect is elaborated in Remark~\ref{rem:virtual_entropy} below.
\eremk
\end{remark}
\begin{remark}[Auxiliary variables]\label{RMK::EXPLAIN}
After the change of variables in~\eqref{EQ::HETER_PQ}, the definition of the auxiliary variables follows the standard LDG approach, with an adjustment to prevent nonlinearities from appearing under differential operators. 
This is done by explicitly imposing the chain rule~\eqref{EQ::HETER_CHAINRULE}, so that it is preserved in a weak sense at the discrete level.
More precisely, we define~$\vr_p\coloneqq \D\vsigma_p=-\D\nabla p$ and~$\vr_q\coloneqq \D\vsigma_q = -\D\nabla q$ (equation~\eqref{EQ::HETER_Q} and first identities in equations~\eqref{EQ::HETER_SIGMA1} and~\eqref{EQ::HETER_SIGMA2}). The second identities in equations~\eqref{EQ::HETER_SIGMA1} and~\eqref{EQ::HETER_SIGMA2}, together with equation~\eqref{EQ::HETER_Z}, impose the chain rule~\eqref{EQ::HETER_CHAINRULE}, avoiding the appearance of~$\nabla p=\nabla\left(u_p(w_p)\right)$ and~$\nabla q=\nabla\left(u_q(w_q)\right)$ in the formulation.
\eremk
\end{remark}
With the variables defined in~\eqref{EQ::HETERODIMER_VARIABLES}, %
problem~\eqref{EQN::HETERODIMER} can be rewritten as follows: find~$w_p,w_q: \QT \rightarrow \R$ and~$\vr_p,\vr_q: \QT \rightarrow \R^d$ such that
\begin{subequations}
\label{EQN::HETERODIMER_REWRITTEN}
\begin{alignat}{3}
\dpt p + \nabla \cdot \vr_p & = f_p\left(p,q\right) & & \quad \text{ in } \QT,\\
\dpt q + \nabla \cdot \vr_q & = f_q\left(p,q\right) & & \quad \text{ in } \QT,\\
\vr_p \cdot \bnOmega = 0  \ \text{ and } \ \vr_q \cdot \bnOmega & = 0 & & \quad  \text{ on } \Gamma \times (0, T), \\
p(\cdot, 0) = p_{0} \text{ and } q(\cdot, 0) & = q_{0} & & \quad \text{ in } \Omega,
\end{alignat}
\end{subequations}
where, in~$Q_T=\Omega\times (0,T)$,~$p=u_p(w_p)$ and~$q=u_q(w_q)$ are understood.
The unknowns~$\vr_p$ and~$\vr_q$, which appear explicitly in formulation~\eqref{EQN::HETERODIMER_REWRITTEN}, are the fluxes of~$p$ and~$q$, respectively; see Remark~\ref{RMK::EXPLAIN}.

\subsection{The structure-preserving backward Euler-LDG discretization}\label{SEC::METHOD}

In this section, we introduce our structure-preserving discretization of system~\eqref{EQN::HETERODIMER} based on the reformulation in~\eqref{EQN::HETERODIMER_REWRITTEN} in terms of the auxiliary variables in~\eqref{EQ::HETERODIMER_VARIABLES}.

\paragraph{Meshes.}
Let~$\{\Th\}_{h>0}$ be a family of conforming, locally quasi-uniform, simplicial partitions of the space domain~$\Omega$ with shape-regular elements. For any element~$K\in\Th$, we denote by~$h_K$ its diameter and by~$\bnK$ the unit normal~$d$-dimensional vector to~$\partial K$, pointing %
outward from~$K$. 
Moreover, we denote by~$(\partial K)^{\circ}$ the union of the facets of~$K$ that belong to~$\Fho$.
The index~$h$ in~$\Th$ represents the mesh size defined as~$h\coloneqq \max_{K\in\Th}h_K$. 
Let also~$\Tt$ be a partition of the time interval~$(0, T)$ of the form~$0 \coloneqq  t_0 < t_1 < \ldots < t_{N_t} \coloneqq  T$. For~$n = 1, \ldots, N_t$, we define the time interval~$\In \coloneqq  (\tnmo, \tn)$ and the time step~$\tau_n \coloneqq  \tn - \tnmo$. 
The subscript~$\tau$ in~$\Tt$ represents the mesh size defined as~$\tau\coloneqq \max_{1\le n\le N_t}\tau_n$.

\paragraph{Piecewise polynomial spaces.}
Given %
a degree of approximation in space~$\ell \in \N$ with~$\ell \geq 1$, we define the following (discontinuous) piecewise polynomial spaces of uniform degree:
\begin{equation*}
\Wh \coloneqq  \prod_{K \in \Th} \Pp{\ell}{K} \qquad \text{ and } \qquad \Rh \coloneqq  \prod_{K \in \Th} \Pp{\ell}{K}^d,
\end{equation*}
where~$\Pp{\ell}{K}$ denotes the space of scalar-valued polynomials of degree at most~$\ell$ defined on~$K$. Moreover, we denote by~$\PiW$ and~$\PiR$ the~$L^2(\Omega)$- and~$L^2(\Omega)^d$-orthogonal projections in~$\Wh$ and~$\Rh$, respectively.

\begin{remark}[Space of $d$-vector-valued polynomials]
In combination with~$\Wh$, the space~$\boldsymbol{\mathcal{R}}^{\ell-1}(\Th)$ can be used in place of~$\Rh$, thereby reducing the number of degrees of freedom without compromising accuracy~\cite{Castillo_etal:2002}. 
However, using the same polynomial basis for both spaces simplifies the computation of the discrete operators involved in the method. Moreover, some numerical studies suggest that both versions yield comparable efficiency (see, e.g., \cite{Cockburn_etal:2002}).
\eremk
\end{remark}

\paragraph{Mesh size function, stability parameters, jumps, and averages.}
We denote the set of all the mesh facets in~$\Th$ by~$\Fh = \Fho \cup \FhN$, where~$\Fho$ and~$\FhN$ are the sets of internal and (Neumann) boundary facets, respectively. We define the mesh size function~$\mathsf{h} \in L^{\infty}(\Fho)$ as 
\begin{equation}
\label{EQN::DEF-h}
\mathsf{h}(\bx) \coloneqq  \min\{h_{K_1}, h_{K_2}\} \quad \text{ if }\bx \in F, \text{ and~$F\in \Fho$ is shared by~$K_1, K_2\in \Th$},
\end{equation}

and the stability parameters
\[
\eta_F \coloneqq  \eta_0 \ell^2  \dfrac{2(\boldsymbol{n}_{K_1}^T\mathbf{D}_{|_{K_1}}\boldsymbol{n}_{K_1})(\boldsymbol{n}_{K_2}^T\mathbf{D}_{|_{K_2}}\boldsymbol{n}_{K_2})}{\boldsymbol{n}_{K_1}^T\mathbf{D}_{|_{K_1}}\boldsymbol{n}_{K_1}+\boldsymbol{n}_{K_2}^T\mathbf{D}_{|_{K_2}}\boldsymbol{n}_{K_2}} > 0,
\]
where~$\ell$ is the polynomial degree in the space discretization, and~$\eta_0>0$ is a constant independent of the problem coefficients and discretization parameters (in practice,~$\eta_0=\mathcal{O}(1)$). 

For any piecewise smooth, scalar-valued function~$\mu$, and any~$d$-vector-valued function~$\boldsymbol{\mu}$, we define the normal jumps and weighted mean values as follows:  
on each facet $F\in \Fho$ shared by $K_1, K_2 \in \Th$,
\begin{alignat*}{3}
\jump{{\mu}}_{\sf N} & \coloneqq  {{\mu}}_{|_{K_1}} \bn_{K_1} + 
{{\mu}}_{|_{K_2}} \bn_{K_2}, & \qquad 
\mvl{\mu}_{\mvlparamF}  & \coloneqq   (1 - \mvlparamF) \mu_{|_{K_1}} + \mvlparamF \mu_{|_{K_2}},\\
\jump{\boldsymbol{\mu}}_{\sf N} & \coloneqq  {\boldsymbol{\mu}}_{|_{K_1}} \cdot \bn_{K_1} + 
{\boldsymbol{\mu}}_{|_{K_2}} \cdot \bn_{K_2},
& \qquad
\mvl{\boldsymbol{\mu}}_{1-\mvlparamF} & \coloneqq   \mvlparamF \boldsymbol{\mu}_{|_{K_1}} + (1-\mvlparamF) \boldsymbol{\mu}_{|_{K_2}},
\end{alignat*}
with~$\mvlparamF\in [0,1]$. %
\paragraph{Discrete gradient and divergence operators in space.}
The discrete gradient operator~$\nablaLDG : \Wh \to \Rh$ is defined by 
\begin{equation}\label{EQ::DGGRAD}
\left(\nablaLDG \vh , \phih \right)_\Omega
=
\left(\nabla_h \vh - \mathcal{L}(\vh),  \phih \right)_\Omega
\quad\forall  \phih \in \Rh,
\end{equation}
where~$\nabla_h$ denotes the piecewise gradient operator defined element-by-element, and the jump lifting operator $\mathcal{L}:
\Wh \to \Rh$ is given by
\[
\left(\mathcal{L}(\vh), \phih \right)_\Omega
=\sum_{F\in\Fho} \left(\jump{\vh}_{\sf N}, \mvl{\phih}_{1-\gamma_F}\right)_F
\quad\forall  \phih \in \Rh.
\]
Accordingly, the discrete divergence operator~$\divLDG: \Rh \to \Wh$ is defined by
\begin{equation}\label{EQ::DGdiv}
\left(\divLDG \rh, \psih\right)_\Omega
=
- \left(\rh, \nablaLDG \psih\right)_{\Omega}
\quad\forall  \psih \in \Wh.
\end{equation}
These definitions correspond to choosing the numerical fluxes in the LDG method using weighted averages, with weight~$\gamma_F$ for the scalar unknowns and~$(1-\gamma_F)$ for vector-valued unknowns, thus preserving the symmetry of the discretization of the second-order differential operator.

\paragraph{The backward Euler-LDG method.}
We fix a penalty parameter~$\varepsilon > 0$, whose role in the method is described in Remark~\ref{RMK:PENALTY-TERM} below. The method is defined as follows: for~$n = 0, \ldots, N_t - 1$, find~$\whp^{\npo}, \whq^{\npo} \in \Wh$ and~$\zhp^{\npo}$, $\zhq^{\npo}$, $\sigmahp^{\npo}$, $\sigmahq^{\npo}$, $\rhp^{\npo}$, $\rhq^{\npo} \in \Rh$ (where the dependence on~$\varepsilon$ has been omitted for brevity), such that, for~$\star=p,q$,
\begin{subequations}
\label{EQ::VARIATIONAL}
\begin{alignat}{3}
\zhstar^{\npo} &= -\nablaLDG \whstar^{\npo}, \\
\label{EQ::VARIATIONAL_SIGMA}
\left(\D\sstar''\big(\ustar(\whstar^{\npo})\big)  \sigmahstar^{\npo},\, \phih\right)_{\Omega} & = \left(\D \zhstar^{\npo},\, \phih\right)_{\Omega} & & 
\qquad \forall \phih \in \Rh, \\
\rhstar^{\npo} &= \PiR \big(\D \sigmahstar^{\npo}\big),\\
\nonumber
\varepsilon \left(\whstar^{\npo},\, \psih \right)_{\LDG} + \frac{1}{\tau_{n + 1}}\big(\ustar(\whstar^{\npo}) & - \ustarh^{\n},\, \psih \big)_{\Omega}  &  \\
\nonumber
+ \left(\divLDG \rhstar^{\npo},\, \psih\right)_{\Omega} 
+  \sum_{F \in \Fho} \big(& \eta_F \mathsf{h}^{-1} \jump{\whstar^{(n + 1)}}_{\sf N}, \jump{\psih}_{\sf N} \big)_{F} \\
\label{EQ::VARIATIONAL_EQ}
& = \left(\fstar\big(\up(\whp^{\npo}), \uq(\whq^{\npo})\big),\, \psih \right)_{\Omega} & & 
\qquad \forall \psih \in \Wh.
\end{alignat}
\end{subequations}

In~\eqref{EQ::VARIATIONAL_EQ}, the quantities~$\ustarh^{\n}$ transmitted from the previous time step are defined as follows: 
\begin{alignat*}{6}
\uph^{(n)} & \coloneqq  \begin{cases}
\PiW p_0 & \text{ if } n = 0, \\
\PiW \up(\whp^{\n}) & \text{ otherwise},
\end{cases}
\quad \text{ and } \quad
\uqh^{(n)} & \coloneqq  \begin{cases}
\PiW q_0 & \text{ if } n = 0, \\
\PiW \uq(\whq^{\n}) & \text{ otherwise},
\end{cases}
\end{alignat*}
namely, at the first time step, they are the~$L^2(\Omega)$ projections of the initial conditions for the \emph{original} variables~$p$ and~$q$; at the subsequent time steps, they are the~$L^2(\Omega)$ projections of the \emph{transformed} variables computed at the previous time step.
Additionally, $(\cdot, \cdot)_{\LDG}$ is a bilinear form in the space~$\Wh$, which is coercive with respect to a norm~$\Norm{\cdot}{\DG}$. While the existence of discrete solutions is guaranteed, for instance, with the choice~$\Norm{\cdot}{\DG}=\Norm{\cdot}{L^2(\Omega)}$ and~$(\cdot, \cdot)_{\LDG}=(\cdot, \cdot)_{L^2(\Omega)}$, specific assumptions on~$\Norm{\cdot}{\DG}$ are required for the convergence analysis; see Section~\ref{sec:convergence} below.

\begin{remark}[Role of the penalty term]
\label{RMK:PENALTY-TERM}
The penalty term with parameter~$\varepsilon > 0$ in~\eqref{EQ::VARIATIONAL_EQ} is introduced to prevent~$u_p(w_p)$ from approaching the extreme values~$0$ and~$\equilp$, and~$u_q(w_q)$ from approaching~$0$. This is necessary because 
$\sp''(\cdot)$ and $\sq''(\cdot)$ in~\eqref{EQ::VARIATIONAL_SIGMA} become singular at these limits. %
The penalty term thus plays a crucial role in the analysis of the method: first, in proving the existence of discrete solutions in Theorem~\ref{thm:existence_discrete}, and subsequently in the compactness argument 
for the~$h$-convergence result in Theorem~\ref{thm:h-convergence}. From a numerical perspective, it 
enhances the stability and convergence of nonlinear solvers, such as the Newton method described in Appendix~\ref{SEC::NEWTON} below.
\eremk
\end{remark}

\begin{remark}[Equation~\eqref{EQ::VARIATIONAL_SIGMA} uniquely determines~$\sigmahstar^{\npo}$]\label{RMK:unique_sigma}
As~$s_p''(p)\D$ is uniformly positive definite, see~\eqref{EQ::HETER_UPD1}, given~$\whp^{\npo}$ and~$\zhp^{\npo}$, equation~\eqref{EQ::VARIATIONAL_SIGMA} with~$\star=p$ determines~$\sigmahp^{\npo}$ in a unique way; see also~Remark~\ref{RMK::SIGMA}.
Given~$\whq^{\npo}$, and choosing~$\phiqh = \sigmahq^{\npo}$ in the term on the left-hand side of~\eqref{EQ::VARIATIONAL_SIGMA} with~$\star=q$, we get
\begin{equation}\label{EQ::COER_SIGMA}
\left(\D \sq''\big(\uq(\whq^{\npo})\big) \sigmahq^{\npo},\, \sigmahq^{\npo} \right)_{\Omega} = \int_{\Omega} \frac{1}{\uq(\whq^{\npo})} \D \sigmahq^{\npo} \cdot \sigmahq^{\npo} \dx \geq \dext \int_{\Omega} \frac{|\sigmahq^{\npo}|^2}{\uq(\whq^{\npo})} \dx.
\end{equation}
Therefore, since~$\dext > 0$ and~$\uq(\whq^{\npo}) > 0$, if the right-hand side of~\eqref{EQ::VARIATIONAL_SIGMA} with~$\star=q$ is equal to zero, then~$\sigmahq = 0$, which implies that~\eqref{EQ::VARIATIONAL_SIGMA} with~$\star=q$ determines~$\sigmahq^{\npo}$ in a unique way. However, as observed in Remark~\ref{RMK::SIGMA}, for large values of~$\uq(\whq^{\npo})$, the linear system may degenerate. Such a degeneracy must be prevented using the penalty term~$\varepsilon (\cdot,\, \cdot)_{\LDG}$; see Remark~\ref{RMK:PENALTY-TERM}.
\eremk
\end{remark}

\begin{remark}[Alternative changes of variable]\label{rem:virtual_entropy}
For the design of method~\eqref{EQ::VARIATIONAL}, the knowledge of an explicit expression of the entropy densities~$s_p$ and~$s_q$ is not actually required, and only the transformations~$u_p$ and~$u_q$ need to be known in order to implement the method.
Indeed, the construction of the scheme only involves~$s_{\star}''(\ustar(w_{\star}))$, $\star=p,q$, which can be computed in terms of the change-of-variable functions~$u_\star$ as~$s_{\star}''(\ustar(w_{\star}))=1/\ustar'(w_{\star})$, since
\[
\ustar'(w_{\star}) = ((\sstar')^{-1})'(w_{\star}) = \frac{1}{s_{\star}''((s_{\star}')^{-1}(w_\star))} = \frac{1}{s_{\star}''(\ustar(w_{\star}))}.
\]
This opens the possibility of using alternative changes of variable, whose corresponding entropy densities do not have a closed-form expression.
For instance, for the unbounded unknown~$q$, one could use the softplus transformation~$u_q(w_q) = \log(1+e^{\wq})$, or the Kaniadakis~$\kappa$-exponential transformation~$u_q(w_q) = (\sqrt{1+\kappa^2 x^2} + \kappa x)^{1/{\kappa}}$ with~$k \in (0, 1]$, see~\cite{Kaniadakis:2013}. %
Using the softplus transformation would lead to the advantage that~$\sq''(q)$ becomes uniformly bounded from below with
\[
s''(u_q(w_q))=1+e^{-{w_q}}>1,
\]
thus overcoming the degeneracy in~\eqref{EQ::HETER_SQRTQ} of the diffusive term, and providing a direct estimate on~$\Norm{\nabla q}{L^2(\Omega)^d}^2$ rather than~$\Norm{\nabla \sqrt{q}}{L^2(\Omega)^d}^2$.

Nevertheless, in the analysis of the method, the explicit forms of~$s_p$ and~$s_q$ appear to play an important role due to the presence of nonlinear reaction terms. 
As summarized in Remark~\ref{rem:math_entropy}, although most of the required properties only involve derivatives of~$s_p$ and~$s_q$, 
property~\ref{prop:v} depends on the specific structure of the underlying entropy functions~$s_p$ and~$s_q$. For this reason, this paper considers the transformations in~\eqref{EQ::HETER_UPUQ}, whose corresponding entropy densities~\eqref{EQ::HETER_SPSQ} are shown to satisfy this property in Proposition~\ref{PROP::HETER_FBOUND}.
\eremk
\end{remark}

\subsection{Matrix form}\label{SEC::MATRIX}

For~$\star=p,q$, we define the following forms and functionals: for all~$\wh,\psih\in\Wh$ and~$\zh,\sigmah,\phih\in\Rh$,
\begin{subequations}
\label{EQ::FORMS}
\begin{alignat}{2}
{\sf m}_h(\zh,\phih)&\coloneqq \left(\zh,\phih\right)_\Omega,\\
{\sf b}_h(\wh,\phih)&\coloneqq \left(\nabla_h \wh,  \phih \right)_\Omega-\sum_{F\in\Fho} \left(\jump{\wh}_{\sf N}, \mvl{\phih}_{1-\gamma_F}\right)_F,\\
{\sf j}_h(\wh,\psih)&\coloneqq \sum_{F \in \Fho} \big(\eta_F \mathsf{h}^{-1} \jump{\wh}_{\sf N}, \jump{\psih}_{\sf N} \big)_{F},\\
{\sf d}_h(\wh,\phih)&\coloneqq \sum_{K\in\Th}\left(\D\zh,\phih\right)_K,\\
{\sf n}_{\star,h}(\wh;\sigmah,\phih)&\coloneqq \sum_{K\in\Th}\left(\D s_\star''\left(\ustar(\wh)\right)\sigmah, \phih\right)_K,\\
{\sf u}_{\star,h}(\wh,\psih)&\coloneqq \left(\ustar(\wh),\psih\right)_\Omega,\\
{\sf f}_{\star,h}(\whp,\whq;\psih)&\coloneqq \left(f_\star\left(u_p(\whp),u_q(\whq)\right),\psih\right)_\Omega.
\end{alignat}
\end{subequations}
We fix the bases of~$\Wh$ and~$\Rh$. For~$\star=p,q$, we denote by~$\cvWstar$, $\cvZstar$, $\cvSigmastar$, and~$\cvRstar$ the vectors of expansion coefficients expressing~$\whstar$, $\zhstar$, $\sigmahstar$, $\rhstar$, respectively, in the chosen bases. 
Additionally, we denote by~$M_I$, $B$, $J$, $M_D$, and~$A_{\LDG}$ the matrices associated with the bilinear forms~${\sf m}_h(\cdot,\cdot)$, ${\sf b}_h(\cdot,\cdot)$, ${\sf j}_h(\cdot,\cdot)$, ${\sf d}_h(\cdot,\cdot)$, and~$(\cdot,\cdot)_\LDG$, respectively, and by~${\mathcal N}_\star$,\, ${\mathcal U}_\star$, and~${\mathcal F}_\star$ the operators associated with the nonlinear functionals~${\sf n}_{\star,h}(\cdot\,;\cdot,\cdot)$, ${\sf u}_{\star,h}(\cdot,\cdot)$, and~${\sf f}_{\star,h}(\cdot,\cdot\,;\cdot)$, respectively. Since the nonlinear functional~${\sf n}_{\star,h}(\cdot\,;\cdot,\cdot)$ is linear with respect to its second argument, we write~$\mathcal{N}_\star(\cvWstar^{\npo}) \cvSigmastar^{\npo}$ instead of~$\mathcal{N}_\star\big(\cvWstar^{\npo}; \cvSigmastar^{\npo}\big)$.

With the above notation, the scheme in~\eqref{EQ::VARIATIONAL} can be written in matrix form as follows:
\begin{subequations}
\label{EQ::MATRICIAL}
\begin{alignat}{3}
M_I \cvZstar^{\npo} &= -B \cvWstar^{\npo}, \\
\mathcal{N}_{\star}\big(\cvWstar^{\npo}\big) \cvSigmastar^{\npo}  & = M_D \cvZstar^{\npo},\\
M_I \cvRstar^{\npo} & = M_D \cvSigmastar^{\npo},\\
\label{EQ::MATRICIAL-WH}
\varepsilon A_{\LDG} \cvWstar^{\npo} + \frac{1}{\tau_{n + 1}}\mathcal{U}_{\star}(\cvWstar^{\npo})  - B^T \cvRstar^{\npo} +  J \cvWstar^{(n + 1)} & = \mathcal{F}_{\star} \big(\cvWp^{\npo}, \cvWq^{\npo}) +\frac{1}{\tau_{n + 1}}\mathcal{U}_{\star, h}^{(n)},
\end{alignat}
\end{subequations}
where, in~\eqref{EQ::MATRICIAL-WH}, for~$\star=p,q$, 
\[
\text{$\mathcal{U}_{\star, h}^{(n)}$
is the coefficient vector of~$\PiW (\star)_0$ if~$n = 0$, or the coefficient vector of
$\PiW \ustar(\whstar^{\n})$ if~$n > 0$.}
\]

Since the mass matrix~$M_I$ is symmetric, positive definite, and block diagonal, it is easy to invert. Therefore, representing~$\cvZstar^{\npo}$ and~$\cvRstar^{\npo}$ as
\[
\cvZstar^{\npo}=-M_I^{-1}B\cvWstar^{\npo}
\quad\text{and}\quad
\cvRstar^{\npo} = M_I^{-1}M_D \cvSigmastar^{\npo},
\]
system~\eqref{EQ::MATRICIAL} becomes
\begin{subequations}
\label{EQ::MATRICIAL_COMPACT}
\begin{alignat}{3}
\label{EQ::MATRICIAL_COMPACT_SIGMA}
\mathcal{N}_{\star}\big(\cvWstar^{\npo}\big) \cvSigmastar^{\npo}  & = -M_D M_I^{-1} B \cvWstar^{\npo},\\
\nonumber
\varepsilon A_{\LDG} \cvWstar^{\npo} + \frac{1}{\tau_{n + 1}}\mathcal{U}_{\star}(\cvWstar^{\npo})   - B^T M_I^{-1} M_D \cvSigmastar^{\npo} & +  J \cvWstar^{(n + 1)} \\
\label{EQ::MATRICIAL_COMPACT_W}
& = \mathcal{F}_{\star} \big(\cvWp^{\npo}, \cvWq^{\npo}) +\frac{1}{\tau_{n + 1}}\mathcal{U}_{\star, h}^{(n)}.
\end{alignat}
\end{subequations}

\begin{remark}[Formulation in terms of the~$\cvW$ unknowns only]\label{RMK::UNIQUESIGMA}
Given~$\cvWstar^{\npo}$, equation~\eqref{EQ::MATRICIAL_COMPACT_SIGMA} determines~$\cvSigmastar^{\npo}$ in a unique way; see Remark~\ref{RMK:unique_sigma}. Then, using
$\cvSigmastar^{\npo}=-\big(\mathcal{N}_{\star}\big(\cvWstar^{\npo}\big)\big)^{-1} M_D M_I^{-1}B \cvWstar^{\npo}$,
system~\eqref{EQ::MATRICIAL_COMPACT} can be reformulated in terms of the~$\cvWstar^{\npo}$ unknowns only as
\begin{equation}\label{EQ::MATRICIAL_W}
\begin{split}
\varepsilon A_{\LDG} \cvWstar^{\npo} + \frac{1}{\tau_{n + 1}}\mathcal{U}_{\star}(\cvWstar^{\npo})+B^TM_I^{-1}M_D \big(\mathcal{N}_{\star}\big(\cvWstar^{\npo}\big)\big)^{-1} M_D M_I^{-1}B \cvWstar^{\npo}+J\cvWstar^{\npo}
\\
=\mathcal{F}_{\star} \big(\cvWp^{\npo}, \cvWq^{\npo}) +\frac{1}{\tau_{n + 1}}\mathcal{U}_{\star, h}^{(n)},
\end{split}
\end{equation}
thus reducing the size of the global nonlinear system.
\eremk
\end{remark}

\section{Stability and existence of discrete solutions}\label{SEC::ANALYSIS}

In this section, we start by establishing a stability property of the system in~\eqref{EQN::HETERODIMER} (Section~\ref{SEC::ENTROPY_CONT}). Then, we derive a discrete analogue of this stability property for the backward Euler-LDG method (Section~\ref{SEC::ENTROPY_DISCR}), which we use to prove the existence of discrete solutions (Section~\ref{sec:EXISTENCE-DISCRETE}). 

\subsection{Entropy stability of the continuous problem}\label{SEC::ENTROPY_CONT}

We now derive an entropy stability estimate for problem~\eqref{EQN::HETERODIMER}. Before doing so, we prove the following bound for the reaction terms, which we write using the notation introduced in~\eqref{EQ::HETER_FPFQ}. 

\begin{proposition}[A bound for the reaction term]\label{PROP::HETER_FBOUND}
For all~$p\in \mathrm{int}\, L_{(0,\equilp)}^\infty(\Omega)$ and~$q\in \mathrm{int}\, L_+^{\infty}(\Omega)$,
we have
\begin{equation}\label{EQ::HETER_BOUNDF}
\fp(p,q)s_p'(p)+\fq(p,q)s_q'(q)\le
C_f\left(\equilq+s_q(q)\right),
\end{equation}
with~$C_f \coloneqq  \frac{1+\log 2}{\log 2}\,\equilp\left(\frac{\lambda_p}{\equilq}+2\mu_{pq}\right) =\frac{1+\log 2}{\log 2}\,\frac{\kappa_p\mu_{pq}}{\lambda_p\Upsilon_{pq}}\left(\kappa_p\mu_{pq}+\Upsilon_{pq}\right)$. \end{proposition}
\begin{proof}
We compute 
\[
\begin{split}
\fp(p,q)s_p'(p)+\fq(p,q)s_q'(q)=&
(-\lambda_p p+\kappa_p)\log\left(\frac{p}{\equilp- p}\right)
-\mu_{pq} pq\, \log\left(\frac{p}{\equilp- p}\right)
-q(\lambda_q-\mu_{pq} p)\log \frac{q}{\equilq}\\
=&: J_1+J_2+J_3,
\end{split}
\]
and proceed to estimate~$J_1$, $J_2$, and~$J_3$.
Recalling that~$0 < p <\equilp=\kappa_p/\lambda_p$, we have
\[
J_1=\lambda_p p\left(1-\frac{\kappa_p}{\lambda_p p}\right)\log\left(\frac{\equilp-p}{p}\right) =-\lambda_p p\left(\frac{\equilp-p}{p}\right)\log\left(\frac{\equilp-p}{p}\right)
\le \frac{\lambda_p p}{e}
< \frac{\kappa_p}{e},
\]
where we have used the inequality~$-\rho\log\rho\le 1/e$ for all~$\rho> 0$.

For~$J_2$, using again that $0 < p <\equilp$ and~$-\rho\log\rho \le 1/e$ for all~$\rho > 0$, and the inequality~$\rho \log(1 - \rho) \le 0$ for all~$0 < \rho < 1$,
we get
\begin{alignat*}{3}
J_2 
= -\mu_{pq} pq \log\Big(\frac{\equilp^{-1} p}{1 - \equilp^{-1} p} \Big) 
&   = \mu_{pq} q\equilp  \Big[\equilp^{-1} p \log(1-\equilp^{-1} p) - \equilp^{-1} p \log(\equilp^{-1} p) \Big] \\
& \le \frac{\equilp\mu_{pq}}{e}q \le \frac{\equilp\mu_{pq}}{e\log 2}
\left(\equilq+s_q(q)\right),
\end{alignat*}
where, in the last inequality, we have used the bound~$q\le \left(\equilq+s_q(q)\right)/\log 2$, %
which follows from~$(x-1)\ge \log x$ for all~$x>0$.

Finally, for~$J_3$, recalling the definition of~$s_q(q)$ in~\eqref{EQ::HETER_SPSQ}, we have
\[
\begin{split}
J_3 %
=(\mu_{pq}p-\lambda_q)\big(s_q(q)+q-\equilq\big)
\le \frac{\Upsilon_{pq}}{\lambda_p}\,\big(s_q(q)+q\big)
+\equilq\lambda_q-\equilq\mu_{pq} p\\
\le \frac{1+\log 2}{\log 2}\,\frac{\Upsilon_{pq}}{\lambda_p}\left(\equilq+s_q(q)\right)+\equilq\lambda_q,
\end{split}
\]
where, in the last inequality, we have used again~$q\le \left(\equilq+s_q(q)\right)/\log 2$.
Collecting the previous estimates of~$J_1$, $J_2$, and~$J_3$ gives
\[
\begin{split}
\fp(p,q)s_p'(p)+\fq(p,q)s_q'(q) &\le
\equilq\left(\frac{\kappa_p}{\equilq e}+\lambda_q\right)+
\left(\frac{\equilp\mu_{pq}}{e\log 2}+\frac{1+\log 2}{\log 2}\,\frac{\Upsilon_{pq}}{\lambda_p}
\right)
\left(\equilq+s_q(q)\right)\\
&\le \left(\frac{\kappa_p}{\equilq e}+\lambda_q+\frac{\equilp\mu_{pq}}{e\log 2}+\frac{1+\log 2}{\log 2}\,\frac{\Upsilon_{pq}}{\lambda_p}\right)\left(\equilq+s_q(q)\right)\\
&\le \frac{1+\log 2}{\log 2}\left(\frac{\kappa_p}{\equilq}+\lambda_q+\equilp\mu_{pq}+\frac{\Upsilon_{pq}}{\lambda_p}\right)\left(\equilq+s_q(q)\right)\\
&=\frac{1+\log 2}{\log 2}\,\equilp\left(\frac{\lambda_p}{\equilq}+2\mu_{pq}\right)\left(\equilq+s_q(q)\right)
=:C_f\left(\equilq+s_q(q)\right),
\end{split}
\]
and the proof is complete.
\end{proof}

We prove the following \emph{local} entropy-stability 
estimate. 

\begin{theorem}[Local entropy stability]\label{TH::HETER_STAB}
Let~$t\in [0,T]$ be such that~$t<1/C_f$, with~$C_f$ as in Proposition~\ref{PROP::HETER_FBOUND}. Then, every solution~$(p,q)$ to~\eqref{EQN::HETERODIMER} satisfies the following stability estimate: 
\begin{equation}\label{EQ::HETER_STAB}
\begin{split}
\int_\Omega 
\left(s_p\left(p(\cdot, t)\right)+
s_q\left(q(\cdot,t)\right)\right)\dx
+4 \dext
\left(
\equilp^{-1}\int_0^{t}\Norm{\nabla p (\cdot, \tau)}{L^2(\Omega)^d}^2 \dtau 
+\int_0^{t}\Norm{\nabla \sqrt{q(\cdot, \tau)}}{L^2(\Omega)^d}^2 \dtau
\right)\\
\le\left(\int_\Omega \left(s_p\left(p_0\right)+s_q\left(q_0\right)\right)\dx + t \equilq C_f|\Omega| \right)e.
\end{split}
\end{equation}
\end{theorem}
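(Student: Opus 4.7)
The plan is to derive~\eqref{EQ::HETER_STAB} formally by testing each PDE in~\eqref{EQN::HETERODIMER} with the corresponding entropy derivative, and then closing the estimate via Grönwall's inequality.

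First, I would multiply the $p$-equation by $s_p'(p)=\log\left(p/(\equilp-p)\right)$ and the $q$-equation by $s_q'(q)=\log(q/\equilq)$, integrate over $\Omega$, and sum. By the chain rule the time-derivative contributions collapse to $\tfrac{d}{dt}\int_\Omega\left(s_p(p)+s_q(q)\right)\dx$. Integration by parts on the diffusion terms, using the no-flux boundary conditions together with the chain-rule identities~\eqref{EQ::HETER_CHAINRULE}, produces $\int_\Omega s_p''(p)\,\D\nabla p\cdot\nabla p\dx + \int_\Omega s_q''(q)\,\D\nabla q\cdot\nabla q\dx$. For the first of these terms I would invoke~\eqref{EQ::HETER_UPD1} to extract $4\equilp^{-1}\dext\Norm{\nabla p}{L^2(\Omega)^d}^2$, and for the second---where $s_q''(q)\D$ is \emph{not} uniformly positive definite---I would invoke~\eqref{EQ::HETER_SQRTQ} to extract $4\dext\Norm{\nabla\sqrt{q}}{L^2(\Omega)^d}^2$. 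The reaction contribution is controlled by Proposition~\ref{PROP::HETER_FBOUND}, giving $\int_\Omega\left(\fp(p,q)s_p'(p)+\fq(p,q)s_q'(q)\right)\dx\le C_f\left(\equilq|\Omega|+\int_\Omega s_q(q)\dx\right)$.

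Integrating in time over $(0,t)$, setting $E(t):=\int_\Omega\left(s_p(p(\cdot,t))+s_q(q(\cdot,t))\right)\dx$ and letting $D(t)\ge 0$ denote the cumulative gradient dissipation (i.e., the left-hand side of~\eqref{EQ::HETER_STAB} minus $E(t)$), the preceding steps give
$$E(t)+D(t)\le F(t)+C_f\int_0^t E(\tau)\dtau\le F(t)+C_f\int_0^t\bigl(E(\tau)+D(\tau)\bigr)\dtau,\qquad F(t):=E(0)+C_f\equilq|\Omega|t,$$
where I have used $D(\tau)\ge 0$ in the last step. Since $F$ is nondecreasing, standard Grönwall applied to $H:=E+D$ yields $E(t)+D(t)\le F(t)e^{C_f t}$, and the hypothesis $t<1/C_f$ gives $e^{C_f t}<e$, producing exactly~\eqref{EQ::HETER_STAB}.

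The main obstacle is the degeneracy of $s_q''(q)\D$ for large $q$: the uniform coercivity argument used for the $p$-diffusion term is not available, and must be replaced by the identity recorded in~\eqref{EQ::HETER_SQRTQ}, which delivers an $L^2$ bound on $\nabla\sqrt{q}$ rather than on $\nabla q$---this is precisely why the statement is framed in terms of $\nabla\sqrt{q(\cdot,\tau)}$. A secondary technical caveat is that the derivation above is formal, since $s_p'(p)$ and $s_q'(q)$ are used as test functions despite potentially being unbounded at the endpoints $p=0,\equilp$ and $q=0$; a fully rigorous argument would require a standard regularization/density procedure, which I would only sketch rather than carry out in detail.
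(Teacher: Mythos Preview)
Your proposal is correct. It differs from the paper's argument in one technical respect: instead of applying Gr\"onwall at the end, the paper multiplies the test functions~$s_p'(p)$ and~$s_q'(q)$ by an exponential weight~$e^{-\tau/t}$ from the outset and integrates over~$(0,t)$. The product rule then produces an extra term~$\frac{1}{t}\int_0^t\int_\Omega(s_p(p)+s_q(q))\,e^{-\tau/t}\dx\dtau$ on the left, and the hypothesis~$t<1/C_f$ allows this term to absorb the reaction contribution~$C_f\int_0^t\int_\Omega(s_p(p)+s_q(q))\,e^{-\tau/t}\dx\dtau$ directly; the factor~$e$ in the final estimate then comes from the bound~$e^{-\tau/t}\ge 1/e$ on~$[0,t]$. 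Your route via Gr\"onwall is equally valid and arguably more transparent, yielding~$H(t)\le F(t)e^{C_f t}<eF(t)$, which is exactly~\eqref{EQ::HETER_STAB}. The two approaches are standard equivalents: the weighted test function is essentially Gr\"onwall unwound into the energy identity. Your remark about the formal nature of the testing (unbounded~$s_\star'$) applies equally to the paper's derivation.
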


\begin{proof}
By testing the first equation in~\eqref{EQN::HETERODIMER} with~$w_p\,e^{-\tau/t}=s_p'(p)\,e^{-\tau/t}$, and using the chain rule~\eqref{EQ::HETER_CHAINRULE}, we obtain
\begin{equation}\label{EQ::HETER_WEAK1}
\begin{split}
\int_0^{t}\int_\Omega f_p(p,q)\,s_p'(p)\,e^{-\tau/t}\dx\dtau & =
\int_0^{t}\int_\Omega f_p(p,q)\,w_p\,e^{-\tau/t}\dx\dtau \\
&=
\int_0^{t}\int_\Omega \dptau p\, w_p\,e^{-\tau/t}\dx\dtau- \int_0^{t}\int_\Omega\nabla \cdot (\D \nabla p)\,w_p\,e^{-\tau/t}\dx\dtau\\
&=\int_0^{t}\int_\Omega\dptau p\,s_p'(p)\,e^{-\tau/t}\dx\dtau+\int_0^{t}\int_\Omega (\D \nabla p)\cdot\nabla w_p\,e^{-\tau/t}\dx\dtau\\
& = \int_0^{t}\int_\Omega\dptau \left(s_p(p)\,e^{-\tau/t}\right)\dx\dtau
+\frac{1}{t}\int_0^{t}\int_\Omega s_p(p)\,e^{-\tau/t}\dx\dtau\\
&\qquad +\int_0^{t}\int_\Omega \left(s_p''(p)\D \nabla p\right)\cdot\nabla p\,e^{-\tau/t}\dx\dtau\\
& = \frac{1}{e}\int_\Omega s_p\left(p(\cdot,t)\right) \dx -\int_\Omega s_p\left(p_0 \right) \dx +\frac{1}{t}\int_0^{t}\int_\Omega s_p(p)\,e^{-\tau/t}\dx\dtau\\
&\qquad + \int_0^{t}\int_\Omega \left(s_p''(p)\D \nabla p\right)\cdot\nabla p \,e^{-\tau/t}\dx \dtau.
\end{split}
\end{equation}
Similarly, from the second equation in~\eqref{EQN::HETERODIMER} tested with~$w_q\,e^{-\tau/t}=s_q'(q)\,e^{-\tau/t}$, we get
\begin{equation}\label{EQ::HETER_WEAK2}
\begin{split}
\int_0^{t}\int_\Omega f_q(p,q)\,s_q'(q)\,e^{-\tau/t} \dx \dtau &=
\frac{1}{e}\int_\Omega s_q\left(q(\cdot,t)\right) \dx -\int_\Omega s_q\left(q_0\right) \dx +\frac{1}{t}\int_0^{t}\int_\Omega s_q(q)\,e^{-\tau/t}\dx\dtau\\
& \qquad + \int_0^{t}\int_\Omega \left(s_q''(q)\D \nabla q\right)\cdot\nabla q \,e^{-\tau/t}\dx \dtau.
\end{split}
\end{equation}
Summing~\eqref{EQ::HETER_WEAK1} and~\eqref{EQ::HETER_WEAK2}, and using the bound in~\eqref{EQ::HETER_BOUNDF}, we obtain
\begin{equation}\label{EQ::HETER_EST1}
\begin{split}
&\frac{1}{e}\int_\Omega 
\left(s_p\left(p(\cdot,t)\right)+s_q\left(q(\cdot,t)\right)\right)\dx
 +\int_0^{t}\int_\Omega
\left[\left(s_p''(p)\D \nabla p\right)\cdot\nabla p+
\left(s_q''(q)\D \nabla q\right)\cdot\nabla q
\right]\,e^{-\tau/t}\dx\dtau\\
&\qquad +\frac{1}{t}\int_0^{t}\int_\Omega \left(s_p(p)+s_q(q)\right)\,e^{-\tau/t}\dx\dtau\\
&\qquad\qquad \le\int_\Omega \left(s_p\left(p_0\right)  + s_q\left(q_0\right)\right) \dx 
+ C_f\int_0^{t}\int_\Omega \left(\equilq+s_q\left(q\right)\right) \,e^{-\tau/t}\dx \dtau \\
& \qquad\qquad\le\int_\Omega \left(s_p\left(p_0\right)  + s_q\left(q_0\right)\right) \dx 
+ t\equilq C_f |\Omega|
+ C_f\int_0^{t}\int_\Omega \left(s_p\left(p\right)+s_q\left(q\right)\right) \,e^{-\tau/t}\dx \dtau.
\end{split}
\end{equation}
Thus, estimate~\eqref{EQ::HETER_STAB} is obtained by inserting~\eqref{EQ::HETER_UPD1} and~\eqref{EQ::HETER_SQRTQ} into~\eqref{EQ::HETER_EST1}, and using the inequalities~$\frac{1}{t}-C_f>0$
and~$e^{-\tau/t}\ge 1/e$ for all~$\tau\in[0,t]$.
\end{proof}

\subsection{Discrete entropy stability}\label{SEC::ENTROPY_DISCR}

For~$\star=p,q$, recall that~$\cvWstar$, $\cvZstar$, $\cvSigmastar$, and~$\cvRstar$ denote the coefficient vectors expressing~$\whstar$, $\zhstar$, $\sigmahstar$, $\rhstar$, respectively, in terms of fixed, given bases. 
Similar to~\cite [Thm.~3.1]{Gomez-Jungel-Perugia:2024}, we prove a discrete entropy stability estimate and existence of solutions to the discrete problem~\eqref{EQ::MATRICIAL_COMPACT}. Before doing so, we establish the following discrete Gr\"onwall-type inequality.

\begin{lemma}[A discrete Gr\"onwall inequality]
\label{lemma:gronwall}
Assume that the time mesh~$\Tt$ is quasi-uniform, i.e., there exists a constant~$C_{\mathrm{qu}} > 0 $ such that~$\tau \le C_{\mathrm{qu}} \tau_{\min}$, and that its mesh size satisfies~$\tau < 1/(2C_f)$ for some positive constant~$C_f$. 
Let~$\{\alpha_n\}_{n = 0}^{N_t} \subset \R^+$ and~$\beta\in \R^+$ be such that
\begin{equation}
\label{eq:recursive-ineq}
(1 - C_f \tau_{n+1}) \alpha_{n+1} \le \alpha_n + \beta C_f \tau_{n+1}  \qquad \text{ for } n = 0, \ldots, N_t - 1.
\end{equation}
Then, the following bound holds for~$n = 0, \ldots, N_t - 1$:
\begin{equation}
\label{eq:gronwall}
\alpha_{n+1} \le \exp\Big(\frac{3}{2} C_f C_{\mathrm{qu}} \tnpo\Big) \big(\alpha_0 + 2\beta C_f \tnpo\big).
\end{equation}
\end{lemma}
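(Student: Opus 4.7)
The plan is to reduce the inhomogeneous recursion~\eqref{eq:recursive-ineq} to a homogeneous one via a simple additive substitution, and then iterate. The key algebraic observation is that adding~$(1 - C_f \tau_{n+1})\beta$ to both sides of~\eqref{eq:recursive-ineq} produces~$(1 - C_f \tau_{n+1})(\alpha_{n+1} + \beta) \le \alpha_n + \beta$. Setting~$\gamma_n := \alpha_n + \beta$, this rewrites as
\[
(1 - C_f \tau_{n+1})\, \gamma_{n+1} \le \gamma_n \qquad \text{for } n = 0, \ldots, N_t - 1.
\]

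Next, the assumption~$\tau_{n+1} \le \tau < 1/(2 C_f)$ guarantees~$1 - C_f \tau_{n+1} > 1/2 > 0$, so I may divide and iterate to obtain
\[
\gamma_{n+1} \le \gamma_0 \prod_{k=1}^{n+1} \frac{1}{1 - C_f \tau_k}.
\]
At this stage I would invoke the elementary inequality~$(1-x)^{-1} \le e^{(3/2)x}$ for~$x \in [0,1/2]$, which follows by checking that~$x \mapsto (1-x)\,e^{(3/2)x}$ attains its minimum on~$[0,1/2]$ at~$x = 0$, with value~$1$. Applying this termwise and using~$\sum_{k=1}^{n+1} \tau_k = \tnpo$ yields
\[
\gamma_{n+1} \le \gamma_0 \, e^{(3/2) C_f \tnpo}.
\]

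Finally, I would unwind~$\gamma_n = \alpha_n + \beta$ to rewrite the last estimate as~$\alpha_{n+1} \le \alpha_0 \, e^{(3/2)C_f \tnpo} + \beta\bigl(e^{(3/2) C_f \tnpo} - 1\bigr)$, and use the standard calculus bound~$e^y - 1 \le y\, e^y$ (valid for~$y \ge 0$) applied to~$y = (3/2) C_f \tnpo$ to arrive at
\[
\alpha_{n+1} \le e^{(3/2) C_f \tnpo}\Big(\alpha_0 + \tfrac{3}{2}\,\beta\, C_f \tnpo\Big).
\]
Since~$C_{\mathrm{qu}} \ge 1$ (which is forced by~$\tau_{\min} \le \tau \le C_{\mathrm{qu}}\tau_{\min}$) and~$3/2 \le 2$, the claimed bound~\eqref{eq:gronwall} follows immediately.

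The argument is essentially elementary; the only non-routine step is spotting the additive shift~$\gamma_n = \alpha_n + \beta$ that linearizes the recursion into a purely multiplicative one. The quasi-uniformity hypothesis does not actually enter the chain of inequalities above, and its appearance in the exponent of~\eqref{eq:gronwall} merely weakens the estimate to a form that is presumably convenient in the applications later in the paper.
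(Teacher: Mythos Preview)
Your proof is correct, and it takes a genuinely different route from the paper's. The paper argues more directly: it first bounds each $1-C_f\tau_{n+1}$ from below by $1-C_f\tau$, obtaining $\alpha_{n+1}\le (1-C_f\tau)^{-1}\alpha_n + 2\beta C_f\tau_{n+1}$, then iterates the inhomogeneous recursion to reach $\alpha_{n+1}\le (1-C_f\tau)^{-(n+1)}(\alpha_0+2\beta C_f\tnpo)$, and finally uses the Taylor-type bound $(1-\delta)^{-1}\le e^{\delta+\delta^2}$ together with the quasi-uniformity estimate $(n+1)\tau\le C_{\mathrm{qu}}\tnpo$ to convert the power into the stated exponential. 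Your additive shift $\gamma_n=\alpha_n+\beta$ is cleaner: it kills the inhomogeneity at the outset, lets you keep the exact product $\prod_k (1-C_f\tau_k)^{-1}$, and telescopes directly to $e^{(3/2)C_f\tnpo}$ via $\sum_k\tau_k=\tnpo$. As you correctly note, this bypasses quasi-uniformity altogether and in fact yields the sharper estimate with $C_{\mathrm{qu}}$ replaced by~$1$ in the exponent; the paper itself remarks after the lemma that quasi-uniformity is ``not essential'' and merely simplifies the presentation, which your argument confirms. One minor quibble: your justification that $(1-x)e^{(3/2)x}$ attains its minimum at $x=0$ should note that the unique interior critical point $x=1/3$ is a maximum, so the minimum must lie at an endpoint, and then compare $f(0)=1$ with $f(1/2)=\tfrac12 e^{3/4}>1$.
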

\begin{proof}
Let~$n \in \{0, \ldots, N_t - 1\}$. Since~$1 - C_f \tau_{n + 1}  \geq 1 - C_f \tau > 1/2$, we have
\begin{equation*}
\alpha_{n + 1} 
\le \frac{1}{1-C_f\tau}\alpha_n+\frac{1}{1-C_f\tau}\beta C_f\tau_{n+1}
\le \frac{1}{1 - C_f \tau} \alpha_n +  2 \beta C_f \tau_{n+1}.
\end{equation*}
This, together with~$1/(1 - C_f \tau)^{\ell} < 1/(1 - C_f \tau)^{n + 1}$ for all~$\ell\le n$, implies
\begin{equation}
\label{eq:aux-gronwall}
\alpha_{n + 1} \le \Big(\frac{1}{1 - C_f \tau} \Big)^{n + 1} \alpha_0 + 2\beta C_f \sum_{k = 0}^{n} \Big(\frac{1}{1 - C_f \tau} \Big)^k \tau_{n + 1 - k} \le \Big( \frac{1}{1 - C_f \tau}\Big)^{n + 1} \big( \alpha_0 + 2 \beta C_f \tnpo).
\end{equation}
Moreover, the following inequality can be easily proven using Taylor expansions:
\begin{equation*}
\label{eq:taylor-inequality}
0 < - \log( 1 - \delta) \le \delta + \delta^2 \quad \text{ for all } 0 < \delta < \frac{1}{2},
\end{equation*}
which implies
\begin{equation}
\label{eq:delta-formula}
1 < \frac{1}{1-\delta} \le \exp\big( \delta + \delta^2\big) \quad \text{ for all } 0 < \delta < \frac12. 
\end{equation}
Then, combining~\eqref{eq:delta-formula} for~$\delta = C_f \tau$ with the inequalities~$(n + 1) \tau \le C_{\mathrm{qu}} \tnpo$ and~$C_f \tau < 1/2$, 
we get
\begin{equation*}
\Big(\frac{1}{1 - C_f \tau}\Big)^{n+1} \le \exp\Big( C_f (n + 1) \tau\big( 1  + C_f \tau) \Big) \le \exp \Big(\frac{3}{2} C_f C_{\mathrm{qu}} \tnpo \Big),
\end{equation*}
which, together with~\eqref{eq:aux-gronwall}, leads to the desired result. 
\end{proof}

The quasi-uniformity assumption on~$\Tt$ 
\begin{equation} \label{eq:qu_t}
\text{there exists a constant~$C_{\mathrm{qu}} > 0 $ such that~$\tau \le C_{\mathrm{qu}} \tau_{n}$, \quad  $n=1,\ldots,N_t$}.
\end{equation}
is used in the last step of the proof of Lemma~\ref{lemma:gronwall}, where it simplifies the argument and the expression of the final estimate. 
While not essential, such an assumption simplifies the notation and presentation, so we adopt it from here on.

Recalling from Section~\ref{SEC::METHOD} that~$\Norm{\cdot}{\DG}$ denotes a norm in~$\Wh$, in which the bilinear form~$(\cdot, \cdot)_{\LDG}$ is coercive (see Section~\ref{SEC::h-convergence} below for details), we state the following theorem.

\begin{theorem}[Discrete entropy stability]\label{TH::DISCRETE_ENTROPY}
Assume that the time mesh~$\Tt$ satisfies the quasi-uniformity assumption~\eqref{eq:qu_t}, and that its mesh size satisfies~$\tau < 1/(2C_f)$,
where~$C_f$ is the constant in Proposition~\ref{PROP::HETER_FBOUND}. 
Then, any solution
$\big\{(\whp^{\n}, \whq^{\n}, \sigmahp^{\n}, \sigmahq^{\n})\big\}_{n = 1}^{N_t}$ to~\eqref{EQ::VARIATIONAL} (recall that~$\zhstar^{\n}$ and~$\rhstar^{\n}$ are defined in terms of~$\whstar^{\n}$ and~$\sigmahstar^{\n}$, respectively) satisfies
\begin{align}
\nonumber
\varepsilon\tau_{n+1} \sum_{\star=p,q}\Norm{\whstar^{\npo}}{\DG}^2 + \sum_{\star=p,q} \int_\Omega s_\star(u_\star(\whstar^{\npo}))\dx 
+ 4\,\equilp^{-1}\dext\tau_{n+1}\Norm{\sigmahp^{\npo}}{L^2(\Omega)^d}^2 \\
\label{eq:entropy1_nosigmaq}
+ \equilq^{-1}\dext \tau_{n+1}\int_\Omega \frac{|\sigmahq^{\npo}|^2}{e^{\whq^{\npo}}}\dx
+\tau_{n+1}\sum_{\star=p,q}\Norm{\eta_F^{\frac12} \mathsf{h}^{-1/2} \jump{\whstar^{\npo}}_{\sf N}}{L^2(\Fho)^d}^2\\
\nonumber
\le 
\sum_{\star=p,q}\int_\Omega s_\star(\ustar(\whstar^{\n}))\dx +
C_f\tau_{n+1}\left(\equilq|\Omega|+C^\ast\right),
\end{align}
and
\begin{alignat}{2}
\nonumber
\sum_{n=0}^{N_t-1}\tau_{n+1} & 
\Bigg(\varepsilon  \sum_{\star=p,q} \Norm{\whstar^{\npo}}{\DG}^2  + 4\,\equilp^{-1}\dext \Norm{\sigmahp^{\npo}}{L^2(\Omega)^d}^2  \\
\nonumber
& + \equilq^{-1}\dext \int_\Omega \frac{|\sigmahq^{\npo}|^2}{e^{\whq^{\npo}}}\dx
+\sum_{\star=p,q}\Norm{\eta_F^{\frac12}\mathsf{h}^{-\frac12} \jump{\whstar^{\npo}}_{\sf N}}{L^2(\Fho)^d}^2\Bigg)
 \\
\label{eq:entropy2_nosigmaq}
& 
+ \sum_{\star = p, q} \int_{\Omega} s_{\star}(u_{\star}(\whstar^{N_t})) \dx  \le 
\int_{\Omega} s_p(p_0) \dx + \int_{\Omega} s_q(q_0) \dx + C_f T\left(\equilq|\Omega|+C^\ast\right),
\end{alignat}
where %
\begin{equation}
\label{def:C-ast}
C^\ast\coloneqq \exp \Big(\frac{3}{2} C_f C_{\mathrm{qu}} T \Big) \Big( \int_{\Omega} s_q(q_0) \dx + 2 \equilq |\Omega| C_f T\Big).
\end{equation}
\end{theorem}

\begin{proof}
Multiplying~\eqref{EQ::MATRICIAL_COMPACT_W} by~$\cvWstar^{\npo}$, we obtain
\begin{equation}\label{EQ::START}
\begin{split}
\tau_{n+1}\sum_{\star=p,q}\Big[\varepsilon \langle A_{\LDG} \cvWstar^{\npo}, \cvWstar^{\npo}\rangle & + \frac{1}{\tau_{n + 1}}\langle \mathcal{U}_{\star}(\cvWstar^{\npo})-\mathcal{U}_{\star, h}^{(n)},\cvWstar^{\npo}\rangle \\
- \langle B^T M_I^{-1} M_D \cvSigmastar^{\npo},\cvWstar^{(n + 1)}\rangle 
& +  \langle J \cvWstar^{(n + 1)},\cvWstar^{\npo}\rangle\Big] \\
& = \tau_{n+1}\sum_{\star=p,q}\langle \mathcal{F}_{\star} \big(\cvWp^{\npo}, \cvWq^{\npo}),\cvWstar^{\npo}\rangle,
\end{split}
\end{equation}
\paragraph{Step 1.} We start by establishing bounds for all terms in~\eqref{EQ::START}.
For the first two terms in the sum on the left-hand side, using that~$\ustar=(s_\star')^{-1}$, \text{for~$n = 1, \ldots, N_t-1$,} we obtain 
\begin{alignat}{3}
\nonumber
\varepsilon \tau_{n+1}\langle A_{\LDG} \cvWstar^{\npo}, \cvWstar^{\npo}\rangle + & \langle \mathcal{U}_{\star}(\cvWstar^{\npo})-\mathcal{U}_{\star, h}^{(n)},\cvWstar^{\npo}\rangle\\
\nonumber
&\ge \varepsilon\tau_{n+1}\Norm{\whstar^{\npo}}{\DG}^2+\int_\Omega\left(\ustar(\whstar^{\npo})  - \ustarh^{\n}\right)\whstar^{\npo}\dx\\
\nonumber
& = \varepsilon\tau_{n+1}\Norm{\whstar^{\npo}}{\DG}^2+\int_\Omega\left(\ustar(\whstar^{\npo})  - \ustar(\whstar^{\n})\right)\whstar^{\npo}\dx \\
\label{EQ::FIRSTSECOND}
&\ge \varepsilon\tau_{n+1}\Norm{\whstar^{\npo}}{\DG}^2+
\int_\Omega\left(s_\star(\ustar(\whstar^{\npo})) -s_\star(\ustar(\whstar^{\n})) \right)\dx,
\end{alignat}
where, in the last step, we used the convexity of~$s_\star$. For~$n = 0$, the term~$\ustar(\whstar^{\n})$ in~\eqref{EQ::FIRSTSECOND} is replaced by~$p_0$ (resp. $q_0$) for~$\star = p$ (resp. $\star = q$). In what follows, we focus on the case~$n > 0$, while the case~$n = 0$ can be treated similarly.

\noindent
For the third term, we have
\begin{align*}
- \tau_{n+1}\langle B^T M_I^{-1} M_D \cvSigmastar^{\npo},\cvWstar^{(n + 1)}\rangle &= -\tau_{n+1} \langle M_I^{-1} M_D \cvSigmastar^{\npo},B\cvWstar^{(n + 1)}\rangle\\
&=-\tau_{n+1}\int_\Omega \D\sigmahstar^{\npo}\cdot\nablaLDG \whstar^{\npo}\dx\\
&=\tau_{n+1}\int_\Omega \D\sigmahstar^{\npo}\cdot\zhstar^{\npo}\dx =\tau_{n+1}\int_\Omega \sigmahstar^{\npo}\cdot\D\zhstar^{\npo}\dx\\
&=\tau_{n+1}\int_\Omega \sigmahstar^{\npo}\cdot\D s_\star''(\ustar(\whstar^{\npo}))\sigmahstar^{\npo}\dx.
\end{align*}
Then, for~$\star=p$, using~\eqref{EQ::HETER_UPD1}, we obtain
\begin{equation}\label{EQ::THIRDp}
- \tau_{n+1}\langle B^T M_I^{-1} M_D \cvSigmap^{\npo},\cvWp^{(n + 1)}\rangle\ge 4\,\equilp^{-1}\dext\tau_{n+1}\Norm{\sigmahp^{\npo}}{L^2(\Omega)^d}^2.
\end{equation}
For~$\star=q$, %
we derive
\begin{equation}\label{EQ::THIRDq}
\begin{split}
- \tau_{n+1}\langle B^T M_I^{-1} M_D \cvSigmaq^{\npo},\cvWq^{(n + 1)}\rangle&\ge \dext \tau_{n+1}\int_\Omega \frac{|\sigmahq^{\npo}|^2}{u_q(\whq^{\npo})}\dx 
= \equilq^{-1}\dext \tau_{n+1}\int_\Omega \frac{|\sigmahq^{\npo}|^2}{e^{\whq^{\npo}}}\dx.
\end{split}
\end{equation}
For the fourth term, by definition, we have the identity
\begin{equation}\label{EQ::FOURTH}
\tau_{n+1}\langle J \cvWstar^{(n + 1)},\cvWstar^{\npo}\rangle = \tau_{n+1}\Norm{\eta_F^{\frac12}\mathsf{h}^{-\frac12} \jump{\whstar^{\npo}}_{\sf N}}{L^2(\Fho)^d}^2.
\end{equation}
Finally, for the term on the right-hand side, we have
\begin{equation}\label{EQ::RHS}
\begin{split}
\tau_{n+1}\sum_{\star=p,q} & \langle \mathcal{F}_{\star} \big(\cvWp^{\npo}, \cvWq^{\npo}),  \cvWstar^{\npo}\rangle \\
&= \tau_{n+1}\sum_{\star=p,q}\int_\Omega f_\star\left(u_p(\whp^{\npo}),u_q(\whq^{\npo})\right)\whstar^{\npo}\dx\\
&=\tau_{n+1}\sum_{\star=p,q}\int_\Omega f_\star\left(u_p(\whp^{\npo}),u_q(\whq^{\npo})\right)s_\star'(\ustar(\whstar^{\npo}))\dx\\
&\le C_f\equilq\tau_{n+1}|\Omega|  + C_f\tau_{n+1}\int_\Omega s_q(u_q(\whq^{\npo}))dx,
\end{split}
\end{equation}
where, in the last step, we used Proposition~\ref{PROP::HETER_FBOUND}.

\paragraph{Step 2.} We proceed by proving the following bound: for~$n=1,\ldots,N_t$,
\begin{equation}\label{eq:bound_s}
\begin{split}
\int_{\Omega} s_q(u_q(\whq^{(n)})) \dx &\le \exp \Big(\frac{3}{2} C_f C_{\mathrm{qu}} T \Big) \Big( \int_{\Omega} s_q(q_0) \dx + 2 \equilq |\Omega| C_f T\Big) =: C^\ast.
\end{split}
\end{equation}
We emphasize that the constant~$C^\ast$ %
is independent of~$h$, $\tau$, and~$n$.
Using~\eqref{EQ::FIRSTSECOND} with~$\star=q$, \eqref{EQ::RHS}, and the fact that the right-hand sides of~\eqref{EQ::THIRDq} and~\eqref{EQ::FOURTH} are nonnegative,
from~\eqref{EQ::MATRICIAL_COMPACT_W} with~$\star=q$ multiplied by~$\cvWq^{\npo}$, we get
\begin{equation*}\label{eq:bound_w}
\begin{split}
\varepsilon\tau_{n+1}\Norm{\whq^{\npo}}{\DG}^2+
\left(1-C_f\tau_{n+1}\right)\int_\Omega s_q(u_q(\whq^{\npo}))\dx
\le
\int_\Omega s_q(\uq(\whq^{\n}))\dx+C_f\equilq \tau_{n+1} |\Omega|,
\end{split}
\end{equation*}
which implies
\[
\left(1-C_f\tau_{n+1}\right)\int_\Omega s_q(u_q(\whq^{\npo}))\dx
\le
\int_\Omega s_q(\uq(\whq^{\n})) \dx+C_f\equilq \tau_{n+1} |\Omega|.
\]
We apply Lemma~\ref{lemma:gronwall} with~$\beta = \equilq|\Omega|$ and
\begin{equation*}
\alpha_n \coloneqq  \begin{cases}
\displaystyle \int_{\Omega} s_q(q_0) \dx & \text{ if } n = 0, \\\\
\displaystyle  \int_{\Omega} s_q(u_q(\whq^{(n)})) \dx & \text{ if } n > 0
\end{cases}
\end{equation*}
to obtain, for~$n=1,\ldots,N_t$,
\begin{equation*}
\begin{split}
\int_{\Omega} s_q(u_q(\whq^{(n)})) \dx & \le \exp \Big(\frac{3}{2} C_f C_{\mathrm{qu}} \tn \Big) \Big( \int_{\Omega} s_q(q_0) \dx + 2 \equilq |\Omega| C_f \tn\Big),
\end{split}
\end{equation*}
which directly yields~\eqref{eq:bound_s}.
\medskip

\paragraph{Step 3.} %
Inserting~\eqref{EQ::FIRSTSECOND}--\eqref{EQ::RHS}
into~\eqref{EQ::START} gives~\eqref{eq:entropy1_nosigmaq}.
This, together with~\eqref{eq:bound_s}, implies~\eqref{eq:entropy2_nosigmaq}.

\end{proof}

\begin{remark}[Control of~$\Norm{\sigmahq^{\npo}}{L^2(\Omega)^d}$] \label{rem:controlsigma}
Since, for our choice of the entropy densities, $s_q''$ is not bounded away from zero, the estimates of Theorem~\ref{TH::DISCRETE_ENTROPY} provide no control of~$\Norm{\sigmahq^{\npo}}{L^2(\Omega)^d}$. 
This is of no consequence for the proof of the existence of discrete solutions in Section~\ref{sec:EXISTENCE-DISCRETE} below. 
However, for the discrete compactness argument used in the convergence analysis, we require an $h$-uniform bound on~$\Norm{\sigmahq^{\npo}}{L^2(\Omega)^d}$. This bound can be obtained from the estimates of Theorem~\ref{TH::DISCRETE_ENTROPY} under suitable assumptions on the norm~$\Norm{\cdot}{\DG}$; see Section~\ref{sec:convergence} below.
\eremk
\end{remark}

\subsection{Existence of discrete solutions \label{sec:EXISTENCE-DISCRETE}}
In the following theorem, we establish the existence of discrete solutions. The proof follows the lines of \cite[Thm.~3.2]{Gomez-Jungel-Perugia:2024}, adapted to our setting. The main modifications concern the terms related to the~$q$ variable.

\begin{theorem}[Existence of discrete solutions]\label{thm:existence_discrete}
For each time step~$n = 0, \ldots, N_t - 1$, problem~\eqref{EQ::MATRICIAL_COMPACT} admits a solution $(\cvWp^{\npo}, \cvWq^{\npo},\cvSigmap^{\npo}, \cvSigmaq^{\npo})$. 
\end{theorem}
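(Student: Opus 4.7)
The plan is to invoke a topological-degree argument on a reduced nonlinear system, using the discrete entropy estimate of Theorem~\ref{TH::DISCRETE_ENTROPY} as a source of a priori bounds. First I would eliminate the auxiliary fluxes: by Remark~\ref{RMK:unique_sigma}, the matrix $\mathcal{N}_\star(\cvWstar^{\npo})$ is symmetric and positive definite for every $\cvWstar^{\npo}\in\R^N$, where $N:=\dim\Wh$, so \eqref{EQ::MATRICIAL_COMPACT_SIGMA} yields $\cvSigmastar^{\npo}$ uniquely and continuously from $\cvWstar^{\npo}$. Substituting into \eqref{EQ::MATRICIAL_COMPACT_W} reduces the existence question to finding a zero in $\R^{2N}$ of a continuous map $\Phi:\R^{2N}\to\R^{2N}$, namely the residual of \eqref{EQ::MATRICIAL_W}.

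Next, I would introduce a homotopy $\{\Phi_\rho\}_{\rho\in[0,1]}$ that retains the penalty contribution $\varepsilon A_{\LDG}$ and the jump term $J$ for every $\rho$, and multiplies by $\rho$ the remaining (nonlinear and data) terms. Then $\Phi_0$ is a positive-definite linear map with unique zero $\mathbf 0$, while $\Phi_1=\Phi$. Testing $\Phi_\rho(\cvWp^{\npo},\cvWq^{\npo})=\mathbf 0$ against $(\cvWp^{\npo},\cvWq^{\npo})$ and repeating Steps~1--3 in the proof of Theorem~\ref{TH::DISCRETE_ENTROPY} — with the reaction bound of Proposition~\ref{PROP::HETER_FBOUND} multiplied by $\rho\le 1$ — I would obtain, uniformly in $\rho\in[0,1]$,
\begin{equation*}
\varepsilon\tau_{n+1}\sum_{\star=p,q}\Norm{\whstar^{\npo}}{\DG}^2 \,\le\, C,
\end{equation*}
with $C$ depending only on $\tau_{n+1}$, $\varepsilon$, $C_f$, $\equilq$, $|\Omega|$, $C^\ast$ and the entropy of the data at time $\tn$. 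Since all norms on the finite-dimensional space $\Wh$ are equivalent, this confines every zero of $\Phi_\rho$ to a common ball $B_R\subset\R^{2N}$. Homotopy invariance of the Brouwer degree then yields $\deg(\Phi,B_R,\mathbf 0)=\deg(\Phi_0,B_R,\mathbf 0)\neq 0$, so $\Phi$ admits a zero in $B_R$, and the fluxes $\cvSigmap^{\npo}, \cvSigmaq^{\npo}$ are recovered from Remark~\ref{RMK:unique_sigma}.

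The hard part will be reconciling the homotopy with the entropy computation so that three conditions hold at once: (i)~$\Phi_\rho$ is jointly continuous in $(\rho,\mathbf V)$, which follows from the smoothness of $\up$, $\uq$ and the uniform invertibility of $\mathcal{N}_\star$ on bounded sets of $\mathbf V$; (ii)~the coercive $\varepsilon$-penalty is preserved for all $\rho\in[0,1]$, thereby controlling $\Norm{\whstar^{\npo}}{\DG}$ through the penalty mechanism discussed in Remark~\ref{RMK:PENALTY-TERM}; and (iii)~the auxiliary bound \eqref{eq:bound_s} on $\int_\Omega s_q(\uq(\whq^{\npo}))\dx$, obtained via the discrete Gr\"onwall inequality of Lemma~\ref{lemma:gronwall}, survives the $\rho$-scaling and closes the chain of estimates. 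Once these are secured, the degree argument concludes the proof.
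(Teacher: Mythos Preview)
Your proposal is correct and follows essentially the same strategy as the paper. The paper phrases the topological argument via the Leray--Schauder fixed-point theorem (defining a linearization map whose scaled fixed points $\cvW=\lambda\Phi(\cvW)$ amount to enhancing the penalty coefficient to $\varepsilon/\lambda$), whereas you phrase it directly via Brouwer degree with a homotopy that scales the nonlinear and data terms by~$\rho$; in finite dimensions these are equivalent formulations, and both rely on the same entropy computation (your Steps~1--3 from Theorem~\ref{TH::DISCRETE_ENTROPY}) to obtain the uniform bound on~$\Norm{\whstar}{\DG}$.
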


\begin{proof}
We apply the Leray-Schauder theorem.
Set~$N_h\coloneqq \dim(\Wh)$ and begin with the case~$n=0$. 

\paragraph{Case~$n=0$.}
Define the operator $\Phi: \R^{2 N_h}\to \R^{2 N_h}$ that maps $(\cvVp,\cvVq)\in \R^{2 N_h}$ to the unique solution $(\cvWp,\cvWq)\in \R^{2 N_h}$ to the following problem: for~$\star=p,q$,
\begin{equation}\label{eq:linearized}
\varepsilon A_{\LDG} \cvWstar = - \frac{1}{\tau_{1}}\mathcal{U}_{\star}(\cvVstar) +\frac{1}{\tau_{1}}\mathcal{U}_{\star, h}^{(0)} + B^T M_I^{-1} M_D \cvSigmastar -  J \cvVstar + \mathcal{F}_{\star} \big(\cvVp, \cvVq),
\end{equation}
where~$\cvSigmastar = \cvSigmastar(\cvVstar)$ is the unique solution to
\begin{equation}\label{eq:linearized2}
\mathcal{N}_{\star}\big(\cvVstar\big) \cvSigmastar  = -M_D M_I^{-1} B \cvVstar,
\end{equation}
see Remarks~\ref{RMK::UNIQUESIGMA} and~\ref{RMK:unique_sigma},
and $\mathcal{U}_{\star, h}^{(0)}$ is the coefficient vector of~$\PiW p_0$ or~$\PiW q_0$ accordingly. Denoting by $\whstar\in \Wh$, $\sigmahstar\in \Rh$, and~$\vhstar\in \Wh$ the functions whose coefficient vectors are, respectively, $\cvWstar$, $\cvSigmastar$, and~$\cvVstar$, the map~$\Phi$ can be defined equivalently as the operator
\[
\Phi: (\Wh)^2 \to (\Wh)^2, \qquad (\vhp,\vhq)\mapsto (\whp,\whq).
\]

\paragraph{Well-definedness of~$\Phi$.} The well-posedness of the linear system~\eqref{eq:linearized} follows from the positive-definiteness of~$A_{\LDG}$, while that of~\eqref{eq:linearized2} is discussed in Remark~\ref{RMK:unique_sigma}. 

\paragraph{Continuity of~$\Phi$.} As for the continuity of~$\Phi$, we aim to prove that, for any sequence~$\{(\vhp^r, \vhq^r)\}_{r \in \N} \subset (\Wh)^2$ converging to some~$(\vhp^\diamond, \vhq^\diamond) \in (\Wh)^2$, the sequence~$\{\Phi(\vhp^r, \vhq^r)\}_{r \in \N}$ converges to~$\Phi(\vhp^\diamond, \vhq^\diamond)$. 
Since the space~$\Wh$ is finite dimensional, the sequence~$(\vhp^r, \vhq^r)$ converges pointwise to~$(\vhp^\diamond, \vhq^\diamond)$, and it is bounded uniformly in~$\Omega$ (recall that~$h$, $\tau$, and~$\varepsilon$ are fixed).
We show that the relation~\eqref{eq:linearized2}
leads to
\begin{equation}\label{eq:conv_nonlin}
\cvSigmastar(\cvVstar^r) \to \cvSigmastar(\cvVstar^\diamond).
\end{equation}

Since the sequence~$(\vhp^r, \vhq^r)$ is bounded uniformly in~$\Omega$, and both~$\up(\cdot)$ and~$\uq(\cdot)$ are continuous in~$\R^d$, then the sequences~$(\up(\vhp^r))$ and~$(\uq(\vhq^r))$ take values in compact sets~$\mathcal{K}_p \subset (0, \equilp)$ and~$\mathcal{K}_q \subset (0, \infty)$, respectively. Moreover, $\sstar''(\cdot)$ is continuous on~$\mathcal{K}_{\star}$, which implies the uniform boundedness in~$\Omega$ of the sequence~$\sstar''(\ustar(\vhstar^r))$ for~$\star = p, q$, i.e., there exist positive constants $C_{s, \star}^{\min}$ and~$C_{s, \star}^{\max}$ independent of~$r$ such that
\begin{equation}
\label{eq:bounds-spp}
0 < C_{s,\star}^{\min} \le \sstar''(\ustar(\vhstar^r)) \le C_{s,\star}^{\max}.
\end{equation}
We emphasize that, for~$s''_p$, we actually have $s''_p\ge 4\equilp^{-1}$ (see the text above~\eqref{EQ::HETER_UPD1}), so that~$C_{s,p}^{\min}=4\equilp^{-1}$.

Additionally, since~$\sstar''(\ustar(\cdot))$ is locally Lipschitz for both~$\star=p,q$\,\footnote{For~$\star=p$, $s_p''(u_p(\cdot))=(1+\exp(\cdot))^2/\equilp\exp(\cdot)$ and, for~$\star=q$, $s_q''(u_q(\cdot))=1/\equilq \exp(\cdot)$.}, and both~$\vhstar^r$ and~$\vhstar^\diamond$ are bounded in~$L^\infty(\Omega)$, we have
\begin{equation} \label{eq:convergence-spp}
\lim_{r\to\infty}\Norm{\sstar''(\ustar(\vhstar^r))-\sstar''(\ustar(\vhstar^\diamond))}{L^\infty(\Omega)} \le \lim_{r\to\infty}C\Norm{\vhstar^r-\vhstar^\diamond}{L^{\infty}(\Omega)}=0,\qquad \star=p,q.
\end{equation}
From the definition of~$\sigmahstar(\vhstar^r)$ and~$\sigmahstar(\vhstar^{\diamond})$ in~\eqref{eq:conv_nonlin}, we deduce the following identity:
\begin{equation}
\label{eq:sigmas-identity}
\begin{split}
\big(\D \sstar''(\ustar(\vhstar^r)) (\sigmahstar(\vhstar^r) - \sigmahstar(\vhstar^{\diamond})), \phih \big)_{\Omega} 
& = \Big(\D \big(\sstar''(\ustar(\vhstar^r)) - \sstar''(\ustar(\vhstar^{\diamond})) \big) \sigmahstar(\vhstar^{\diamond}), \phih\Big)_{\Omega} \\
& \quad - \Big(\D \nablaLDG \big(\vhstar^r - \vhstar^{\diamond} \big), \phih \Big)_{\Omega}
\end{split}
\end{equation}
for all~$\phih \in \Rh$. Taking~$\phih = \sigmahstar(\vhstar^r) - \sigmahstar(\vhstar^{\diamond})$  in~\eqref{eq:sigmas-identity} and using the properties of~$\D$ in~\eqref{eq:D}, the bound in~\eqref{eq:bounds-spp}, and the Cauchy--Schwarz inequality, we obtain
\begin{equation*}
\begin{split}
\dext & C_{s, \star}^{\min}  \Norm{\sigmahstar(\vhstar^r) - \sigmahstar(\vhstar^{\diamond})}{L^2(\Omega)^d}^2 \\
& \le D_{\max} \Big(\Norm{\sstar''(\ustar(\vhstar^r)) - \sstar''(\ustar(\vhstar^{\diamond}))}{L^{\infty}(\Omega)^d}\Norm{\sigmahstar(\vhstar^{\diamond})}{L^2(\Omega)^d} + \Norm{\nablaLDG (\vhstar^r - \vhstar^{\diamond})}{L^2(\Omega)^d} \Big) \\
& \quad \times \Norm{\sigmahstar(\vhstar^r) - \sigmahstar(\vhstar^{\diamond})}{L^2(\Omega)^d}.
\end{split}
\end{equation*}
This bound, together with~\eqref{eq:convergence-spp} and the convergence of~$\Norm{\nablaLDG (\vhstar^r- \vhstar^{\diamond})}{L^2(\Omega)^d}$ due to the norm equivalence in finite dimensions, implies
\begin{equation*}
\lim_{r \to \infty}    \Norm{\sigmahstar(\vhstar^r) - \sigmahstar(\vhstar^{\diamond})}{L^2(\Omega)^d} =  0,
\end{equation*}
which completes the proof of~\eqref{eq:conv_nonlin}.

The continuity property in~\eqref{eq:conv_nonlin}, together with the continuity of~$\ustar(\cdot)$ and~$\fstar(\cdot, \cdot)$, and the linearity and boundedness of the remaining terms, implies the continuity of the right-hand side of~\eqref{eq:linearized} with respect to~$\cvVstar$. Passing to the limit in~\eqref{eq:linearized} for the sequence~$\{(\cvVp^r, \cvVq^r)\}_{r \in \N}$, the following equations are obtained: for~$\star = p, q$,
\begin{equation*}
\varepsilon A_{\LDG} \cvWstar^{\diamond} = - \frac{1}{\tau_{1}}\mathcal{U}_{\star}(\cvVstar^{\diamond}) +\frac{1}{\tau_{1}}\mathcal{U}_{\star, h}^{(0)} + B^T M_I^{-1} M_D \cvSigmastar(\cvVstar^{\diamond}) -  J \cvVstar^{\diamond}  + \mathcal{F}_{\star} \big(\cvVp^{\diamond}, \cvVq^{\diamond}).
\end{equation*}
The uniqueness of the solution to this linear system, guaranteed by the positive-definiteness of~$A_{\LDG}$, implies that~$(\whp^{\diamond}, \whq^{\diamond}) = \Phi(\vhp^{\diamond}, \vhq^{\diamond})$, which completes the proof of the continuity of the map~$\Phi$.

\paragraph{Compactness of~$\Phi$.} The compactness is automatic in finite dimensions. 

\paragraph{Boundedness of scaled fixed points.} It only remains to prove that 
\begin{equation}\label{eq:LS}
\text{all solutions to $(\cvWp^\lambda,\cvWq^\lambda)=\lambda \Phi((\cvWp^\lambda,\cvWq^\lambda))$ with~$\lambda\in [0,1]$ satisfy~$\Norm{(\cvWp^\lambda,\cvWq^\lambda)}{\R^{2 N_h}}\le R$},
\end{equation}
with a constant~$R>0$ independent of~$\lambda$. Then, the Leray-Schauder theorem implies that~$\Phi$ has a fixed point, which gives the existence of a solution to~\eqref{EQ::MATRICIAL_COMPACT} for~$n=0$.  

Let~$(\cvWp^\lambda,\cvWq^\lambda)\ne (\mathbf{0},\mathbf{0})$ be such that~$(\cvWp^\lambda,\cvWq^\lambda)=\lambda \Phi((\cvWp^\lambda,\cvWq^\lambda))$ with~$\lambda\in [0,1]$. Then, necessarily, $\lambda\in (0,1]$ and
\[
\frac{\varepsilon}{\lambda} A_{\LDG} \cvWstar^\lambda + \frac{1}{\tau_{1}}\mathcal{U}_{\star}(\cvWstar^\lambda) -\frac{1}{\tau_{1}}\mathcal{U}_{\star, h}^{(0)} - B^T M_I^{-1} M_D \cvSigmastar^\lambda +  J \cvWstar^\lambda  = \mathcal{F}_{\star} \big(\cvWp^\lambda, \cvWq^\lambda).
\]
Multiplying by~$\cvWstar^\lambda$ and proceeding as in the proof of Theorem~\ref{TH::DISCRETE_ENTROPY}, we obtain, in terms of the finite element functions, 
\begin{align}
\frac{\varepsilon}{\lambda}\tau_{1} \sum_{\star=p,q}\Norm{\whstar^\lambda}{\DG}^2 + \sum_{\star=p,q} \int_\Omega s_\star(u_\star(\whstar^\lambda))\dx 
\label{eq:LSauxiliary}
\le 
\int_\Omega s_p(p_0)\dx +\int_\Omega s_q(q_0)\dx +
C_f\tau_{1}\left(\equilq|\Omega|+C^\ast\right),
\end{align}
with~$C^\ast$ as in Theorem~\ref{TH::DISCRETE_ENTROPY}. Using~\eqref{eq:initial_entropy}, we have that, for~$\star=p,q$, $\Norm{\whstar^\lambda}{\DG}$ is uniformly bounded with respect to~$\lambda$, which implies~\eqref{eq:LS}. This concludes the proof of the existence of solutions to the system at the first time step. 
Additionally, from~\eqref{eq:LSauxiliary}, for any of such solutions, we also have that
\[
\sum_{\star=p,q} \int_\Omega s_\star(u_\star(\whstar^{(1)}))\dx\le \int_\Omega s_p(p_0)\dx +\int_\Omega s_q(q_0)\dx +
C_f\tau_{1}\left(\equilq|\Omega|+C^\ast\right).
\]

\paragraph{Case~$n\ge 1$.} For~$n\ge 1$, we proceed by induction and assume existence of solutions at time step~$n-1$ as well as, for any of such solutions, the boundedness of~$\sum_{\star=p,q} \int_\Omega s_\star(u_\star(\whstar^{(n)}))\dx$. Proceeding as above for the linearized problem
\[
\varepsilon A_{\LDG} \cvWstar = - \frac{1}{\tau_{n + 1}}\mathcal{U}_{\star}(\cvVstar) +\frac{1}{\tau_{n + 1}}\mathcal{U}_{\star, h}^{(n)} + B^T M_I^{-1} M_D \cvSigmastar -  J \cvWstar  + \mathcal{F}_{\star} \big(\cvVp, \cvVq),
\]
where $\mathcal{U}_{\star, h}^{(n)} = \mathcal{U}_{\star}\big(\cvWstar^{(n)}\big)$,
we obtain 
\begin{align}
\frac{\varepsilon}{\lambda}\tau_{n+1} \sum_{\star=p,q}\Norm{\whstar^\lambda}{\DG}^2 + \sum_{\star=p,q} \int_\Omega s_\star(u_\star(\whstar^\lambda))\dx 
\label{eq:LSauxiliary2}
\le 
\sum_{\star=p,q}\int_\Omega s_\star(u_\star(\whstar^{(n)}))\dx +
C_f\tau_{n+1}\left(\equilq|\Omega|+C^\ast\right).
\end{align}
The boundedness of~$\sum_{\star=p,q} \int_\Omega s_\star(u_\star(\whstar^{(n)}))\dx$ implies that, for~$\star=p,q$, $\Norm{\whstar^\lambda}{\DG}$ is uniformly bounded with respect to~$\lambda$, and thus existence of solutions to~\eqref{EQ::MATRICIAL_COMPACT} at the time step~$n$. Additionally, from~\eqref{eq:LSauxiliary2}, for any of such solutions, we also have that
\[
\sum_{\star=p,q} \int_\Omega s_\star(u_\star(\whstar^{(n+1)}))\dx\le \sum_{\star=p,q}\int_\Omega s_\star(u_\star(\whstar^{(n)}))\dx +
C_f\tau_{1}\left(\equilq|\Omega|+C^\ast\right),
\]
which completes the proof.
\end{proof}

\section{Convergence of the scheme}
\label{sec:convergence}
We now study the convergence of the scheme to a weak solution to the system~\eqref{EQN::HETERODIMER} that satisfies the physical bounds in~\eqref{EQ:HETER_BOUNDSCQ}. To do so, after specifying the choice of the bilinear form~$(\cdot, \cdot)_{\LDG}$ and the norm~$\Norm{\cdot}{\DG}$ (Section~\ref{sec:new}), we proceed in two steps: first, we prove that, as~$h \to 0$, the scheme converges to a semidiscrete-in-time formulation (Section~\ref{SEC::h-convergence}); then, we analyze the convergence of this formulation as~$(\varepsilon, \tau) \to (0, 0)$ (Section~\ref{SEC::tau-varepsilon}). 

\subsection{Choice of the bilinear form~\texorpdfstring{$(\cdot, \cdot)_{\LDG}$}{(.,.)_{LDG}} and the norm~\texorpdfstring{$\Norm{\cdot}{\DG}$}{||.||_{DG}}}\label{sec:new}
Henceforth, we assume that the degree of approximation in space satisfies~$\ell \geq 2$, and set the weight parameter~$\gamma_F = 1/2$ for all facets~$F \in \Fho$.
We choose the bilinear form~$(\cdot, \cdot)_{\LDG}$ as a discrete version of the $H^2(\Omega)$-regularization term used in the boundedness-by-entropy framework~\cite[\S3]{Jungel:2015} for two- and three-dimensional domains, and the norm~$\Norm{\cdot}{\DG}$ as a discrete version of the~$H^2(\Omega)$ norm. 
The specific choice we make admits the following discrete Sobolev inequality (see Lemma~\ref{lemma:LDG-product} below): there exists a positive constant~$\Csob$ independent of the mesh size~$h$ such that
\begin{equation}\label{EQ::LINFTY}
\Norm{\vh}{L^{\infty}(\Omega)} \le \Csob \Norm{\vh}{\DG}  \qquad \forall \vh \in \Wh.
\end{equation}
This inequality allows us to treat the term involving~$\sigmahq^{(n+1)}$ in the stability estimates of Theorem~\ref{TH::DISCRETE_ENTROPY} (see also Remark~\ref{rem:controlsigma}) as follows:
\begin{equation}
\label{eq:bound-sigmahq}
\equilq^{-1}\dext \tau_{n+1}\int_\Omega \frac{|\sigmahq^{\npo}|^2}{e^{\whq^{\npo}}}\dx \geq \equilq^{-1} \dext \tau_{n+1} \exp(-C/{\sqrt{\varepsilon \tau_{n+1}}}) \Norm{\sigmahq^{(n+1)}}{L^2(\Omega)^d}^2,
\end{equation}
for a positive constant~$C$ independent of~$h$, $\tau$, and~$\varepsilon$.
Indeed, estimate~\eqref{eq:entropy1_nosigmaq} implies that the term~${\varepsilon}\tau_{n+1}\Norm{\whq^{\npo}}{\DG}^2$ is uniformly bounded with respect to~$h$. Consequently, thanks to~\eqref{EQ::LINFTY}, the same holds for~$\varepsilon \tau_{n+1}\Norm{\whq^{\npo}}{L^\infty(\Omega)}^2$.
This uniform bound yields the lower bound~$\exp({-\Norm{\whq^{\npo}}{L^\infty(\Omega)}}) \ge \exp(-C/{\sqrt{\varepsilon\tau_{n+1}}})$, from which~\eqref{eq:bound-sigmahq} follows.

In order to define~$(\cdot, \cdot)_{\LDG}$ and~$\Norm{\cdot}{\DG}$, we first introduce the auxiliary piecewise polynomial space
\begin{equation*}
    \Zh \coloneqq  \prod_{K \in \Th} \Pp{\ell}{K}^{d \times d},
\end{equation*}
and the discrete LDG Hessian operator~$\mathcal{H}_{\LDG} : \Wh \to \Zh$ defined by
\begin{equation*}
\mathcal{H}_{\LDG} \lambda_h \coloneqq  \mathcal{H}_h \lambda_h - \mathcal{G}_h (\lambda_h) + \mathcal{B}_h (\lambda_h),
\end{equation*}
where~$\mathcal{H}_h$ is the piecewise Hessian operator defined element-by-element, and~$\mathcal{G}_h : \Wh \to \Zh$ and~$\mathcal{B}_h : \Wh \to \Zh$ are the lifting operators, which are given, respectively, by
\begin{subequations}
\begin{alignat}{3}
(\mathcal{G}_h \lambda_h, \Theta_h)_{\Omega} & = \sum_{K \in \Th} (\nabla \lambda_h, \mvl{\Theta_h}_{\!\frac12} \bnK)_{(\partial K)^{\circ}} & & \qquad \forall \Theta_h \in \Zh, \\
(\mathcal{B}_h \lambda_h, \Theta_h)_{\Omega} & = (\jump{\lambda_h}_{\sf N}, \mvl{\nabla_h \cdot \Theta_h}_{\!\frac12})_{\Fho} & & \qquad \forall \Theta_h \in \Zh.
\end{alignat}
\end{subequations}

Then, we define the bilinear form~$(\cdot, \cdot)_{\LDG}$ as
\begin{equation}
\label{def:LDG}
\begin{split}
    (\wh, \lambdah)_{\LDG} \coloneqq  & (\wh, \lambdah)_{\Omega} + (\nablaLDG \wh, \nablaLDG \lambdah)_{\Omega} + (\mathcal{H}_{\LDG} \wh,\mathcal{H}_{\LDG} \lambdah)_{\Omega} \\
    & + (\mathsf{h}^{-1} \jump{\nabla_h \wh}, \jump{\nabla_h \lambdah})_{\Fho} + (\mathsf{h}^{-3} \jump{\wh}_{\sf N}, \jump{\lambdah}_{\sf N})_{\Fho},
\end{split}
\end{equation}
where~$\jump{\cdot}$ denotes the standard (vector-valued) total jump on each facet~$F \in \Fho$.
Finally, we define the discrete norm:
\begin{equation}
\label{def:DG-norm}
    \Norm{\wh}{\DG}^2 \coloneqq  \Norm{\wh}{L^2(\Omega)}^2 + \Norm{\nabla_h \wh}{L^2(\Omega)^d}^2 + \Norm{\mathcal{H}_h \wh}{L^2(\Omega)^{d \times d}}^2 + \Norm{\mathsf{h}^{-\frac12} \jump{\nabla_h \wh}}{L^2(\Fho)^d}^2 + \Norm{h^{-\frac{3}{2}} \jump{\wh}_{\sf N}}{L^(\Fho)^d}^2.
\end{equation}
We emphasize that~$\Norm{\cdot}{\DG}$ is \emph{not} the norm  associated with~$(\cdot,\cdot)_{\LDG}$.

\subsection{Convergence as~\texorpdfstring{$h \to 0$}{h to 0} \label{SEC::h-convergence}}
In this section, we fix the penalty parameter~$\varepsilon > 0$ and the partition~$\Tt$ of the time interval~$(0, T)$ with~$\tau < 1/(2C_f)$. We consider a sequence of space meshes~$\{\Thm\}_m$ with decreasing  maximum mesh sizes~$\{h_m\}_m$ such that~$h_m \to 0$  as~$m \to \infty$. For the sake of clarity, we henceforth write the superscript~$\varepsilon$ to highlight the dependence of the discrete solution to~\eqref{EQ::VARIATIONAL} on the penalty parameter~$\varepsilon$.

Next lemma summarizes some properties of the bilinear form~$(\cdot, \cdot)_{\LDG}$ that are instrumental in our convergence analysis; see~\cite[\S4.2]{Gomez-Jungel-Perugia:2024}. 

\begin{lemma}[{Properties of~$(\cdot, \cdot)_{\LDG}$}]
\label{lemma:LDG-product}
Let the bilinear form~$(\cdot, \cdot)_{\LDG}$ be defined as in~\eqref{def:LDG}. Then, the following properties are satisfied:
\begin{enumerate}[label = \roman*), ref = \emph{\roman*)}]
    \item\label{P1} The bilinear form~$(\cdot, \cdot)_{\LDG}$ is coercive in~$\Whm$ with respect to the norm~$\Norm{\cdot}{\DG}$ defined in~\eqref{def:DG-norm}, i.e., there exists a positive constant~$\Ccoer$ independent of~$h_m$ such that
    \begin{equation*}
        (\psihm, \psihm)_{\LDG} \geq \Ccoer \Norm{\psihm}{\DG}^2 \qquad \forall \psihm \in \Whm.
    \end{equation*}

    \item\label{P2} If~$d = 2$, the following discrete Sobolev embedding holds: there exists a positive constant~$\Csob$ independent of~$h_m$ such that
    \begin{equation*}
        \Norm{\psihm}{L^{\infty}(\Omega)} \le \Csob \Norm{\psihm}{\DG} \qquad \forall \psihm \in \Whm.
    \end{equation*}

    \item\label{P3} For any sequence~$\{w_m\}_{m \in \N}$ with~$w_m \in \Whm$ that is uniformly bounded in the DG norm~\eqref{def:DG-norm}, there exists a subsequence, that we still denote by~$\{w_m\}_{m \in \N}$, and a function~$w \in H^2(\Omega)$ such that, as~$m \to \infty$, 
    \begin{alignat*}{3}
    \nablaLDG w_m & \rightharpoonup \nabla w & & \qquad \text{ weakly in~$L^2(\Omega)^d$}, \\
    w_m & \to w & & \qquad \text{ strongly in~$L^q(\Omega)$},
    \end{alignat*}
    with~$ 1 \le q < \infty$ (if~$d = 2$) or~$1 \le q < 6$  (if~$d = 3$). Moreover, for any~$\psi \in H^2(\Omega)$, there exists a sequence~$\{\psim\}_{m \in \N}$ with~$\psi \in \mathcal{W}^2(\Thm) \cap H^1(\Omega)$ such that, as~$m \to 0$, such a sequence converges strongly in~$H^1(\Omega)$ to~$\psi$ and
    \begin{equation*}
        (w_m, \psim)_{\LDG} \to (w, \psi)_{\Omega} + (\nabla w, \nabla \psi)_{\Omega} + (\mathcal{H} w, \mathcal{H} \psi)_{\Omega}.
    \end{equation*}
\end{enumerate}
\end{lemma}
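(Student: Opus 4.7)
The plan is to verify the three properties sequentially, adapting the corresponding arguments from~\cite[\S4.2]{Gomez_Jungel_Perugia:2024} to the present setting.

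For~\ref{P1}, I would expand $\nablaLDG\psihm = \nabla_h\psihm - \mathcal{L}(\psihm)$ and $\mathcal{H}_{\LDG}\psihm = \mathcal{H}_h\psihm - \mathcal{G}_h(\psihm) + \mathcal{B}_h(\psihm)$, and estimate each lifting operator in $L^2(\Omega)$ by the corresponding jump seminorm using standard discrete trace inequalities on shape-regular simplices. Concretely, $\mathcal{L}(\psihm)$ is controlled by $\Norm{\mathsf{h}^{-1/2}\jump{\psihm}_{\sf N}}{L^2(\Fho)^d}$, while $\mathcal{G}_h(\psihm)$ and $\mathcal{B}_h(\psihm)$ are controlled by the gradient-jump and normal-jump seminorms scaled by $\mathsf{h}^{-1/2}$ and $\mathsf{h}^{-3/2}$ respectively, precisely the terms that appear both in~\eqref{def:LDG} and~\eqref{def:DG-norm}. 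Applying Young's inequality to separate the broken derivatives from the liftings and absorbing the residual jump contributions into the penalty terms of $(\cdot,\cdot)_{\LDG}$ yields coercivity with a constant~$\Ccoer$ depending only on~$\ell$ and the shape-regularity constant of~$\Thm$.

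For property~\ref{P2}, I would build an~$H^2$-conforming reconstruction~$R_{h_m}\psihm$ by averaging facet degrees of freedom, whose distance from~$\psihm$ in~$L^\infty(\Omega)$ is controlled by the jump seminorms entering~\eqref{def:DG-norm}; the continuous Sobolev embedding~$H^2(\Omega)\hookrightarrow L^\infty(\Omega)$ (valid in dimension~$d=2$) applied to~$R_{h_m}\psihm$, together with this reconstruction bound, gives the claim with~$\Csob$ independent of~$h_m$.

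The main obstacle is property~\ref{P3}, which combines a discrete Rellich--Kondrachov-type compactness with a consistency statement for the inner product. For the first two convergences, I would exploit the same~$H^2$-conforming reconstruction~$R_{h_m}w_m$: the uniform bound on~$\Norm{w_m}{\DG}$ translates into a uniform~$H^2$ bound on the reconstruction, up to a residual that vanishes quantitatively in~$h_m$; extracting a weak~$H^2$ limit and invoking the compact embedding~$H^2\hookrightarrow L^q$ gives the strong~$L^q$ convergence of~$w_m$, while the weak convergence~$\nablaLDG w_m \rightharpoonup \nabla w$ follows because~$\mathcal{L}(w_m)$ tends to zero weakly in~$L^2(\Omega)^d$ under the jump bounds. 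For the sequence~$\psim$, I would take an Oswald- or Scott--Zhang-type~$H^2$-conforming quasi-interpolant of~$\psi$, for which the normal and gradient jump seminorms vanish at rates~$h_m^{3/2}$ and~$h_m^{1/2}$; this makes the penalty contributions in~$(w_m,\psim)_{\LDG}$ vanish in the limit, while the remaining terms pass to the limit by weak--strong duality using the previous two convergences together with the weak convergence of~$\mathcal{H}_{\LDG} w_m$ to~$\mathcal{H} w$ obtained by the same reconstruction argument applied to second derivatives.
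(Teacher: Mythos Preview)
Your proposal is correct and aligns with the paper's own treatment: the paper does not give an independent proof of this lemma but simply refers to~\cite[\S4.2]{Gomez_Jungel_Perugia:2024}, which is precisely the source whose arguments you propose to adapt. Your sketch of coercivity via lifting bounds and Young's inequality, the $L^\infty$ embedding via an $H^2$-conforming reconstruction plus the continuous Sobolev embedding in $d=2$, and the compactness/consistency via the same reconstruction and quasi-interpolants, is exactly the strategy carried out there.
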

Property~\ref{P2} has been proven in~\cite[\S4.2]{Gomez-Jungel-Perugia:2024} for~$d=2$. Although the argument there does not extend to~$d=3$, we believe the property remains valid in that case. However, in the absence of a proof, we proceed by making the following abstract assumption, which is obviously satisfied for~$d=2$, as detailed above.
\begin{assumption}
 There exists a bilinear form~$(\cdot,\cdot)_{\LDG}$ and a norm~$\Norm{\cdot}{\DG}$ such that~\ref{P1}, \ref{P2}, and~\ref{P3} in Lemma~\ref{lemma:LDG-product} are satisfied.
\end{assumption}

\begin{theorem}[$h$-convergence]
\label{thm:h-convergence}
\begin{subequations} \label{eq:strong-u}
Let the penalty parameter~$\varepsilon > 0$ and the time partition~$\Tt$ be fixed, and assume that~$\Tt$ satisfies the quasi-uniformity condition~\eqref{eq:qu_t} with~$\tau < 1/(2C_f)$. Let also the initial conditions~$p_0$ and~$q_0$ satisfy~\eqref{eq:ic}, and the degree of approximation in space~$\ell \geq 2$.
For~$n = 1, \ldots, N_t$, there exist~$w_p^{\varepsilon, \n}, w_q^{\varepsilon, \n} \in H^2(\Omega)$ and a subsequence of~$\{\Thm\}_{m \in \N}$ still denoted by~$\{\Thm\}_{m \in \N}$ 
such that, as~$m \to \infty$, 
\begin{alignat}{3}
\label{eq:strong-u_p}
 p_m^{\varepsilon, \n} \coloneqq  u_p(w_{p, m}^{\varepsilon, \n}) & \to p^{\varepsilon, \n} \coloneqq  u_p(w_p^{\varepsilon, \n}) & & \qquad \text{ strongly in~$L^r$ for all~$r \in [1, \infty)$}, \\
 \label{eq:strong-u_q}
 q_m^{\varepsilon, \n} \coloneqq  u_q(w_{q, m}^{\varepsilon, \n}) & \to q^{\varepsilon, \n} \coloneqq  u_q(w_q^{\varepsilon, \n}) & & \qquad \text{ strongly in~$L^r$ for all~$r \in [1, \infty)$}.
\end{alignat}
\end{subequations}
Moreover, for~$n = 0, \ldots, N_t - 1$, the pair~$(w_p^{\varepsilon, \npo}, w_q^{\varepsilon, \npo})$ solves the following semidiscrete-in-time variational formulation: for~$\star = p, q$, 
\begin{alignat}{3}
\nonumber
\varepsilon \Big[ \left(w_{\star}^{\varepsilon, \npo},  \psi\right)_{\Omega} & + \left(\nabla w_{\star}^{\varepsilon, \npo}, \nabla \psi\right)_{\Omega} + \left(\mathcal{H} w_{\star}^{\varepsilon, \npo},  \mathcal{H} \psi \right)_{\Omega} \Big] + \frac{1}{\tau_{n+1}}\left(u_{\star}(w_{\star}^{\varepsilon, \npo}) - (\star)^{\varepsilon, \n}, \psi \right)_{\Omega} \\
\label{eq:h-limit-formulation}
& + \left(\D \nabla u_{\star}(w_{\star}^{\varepsilon, \npo}), \nabla \psi\right)_{\Omega} = \left(f_{\star}(u_p(w_p^{\varepsilon, \npo}), u_q(w_q^{\varepsilon, \npo})), \psi \right)_{\Omega} \qquad \forall \psi \in H^2(\Omega),
\end{alignat}
where~$(\star)^{\varepsilon, (0)} \coloneqq  (\star)_0$, and~$\mathcal{H} : H^2(\Omega) \to L^2(\Omega)^{d\times d}$ denotes the global Hessian operator.

Finally, the following entropy stability estimate for the limit functions holds for~$n = 1, \ldots, N_t$:
    \begin{align}
\nonumber
\sum_{k=0}^{n-1}\tau_{k+1} \Bigg(\varepsilon  \sum_{\star=p,q} & \Norm{w_{\star}^{\varepsilon, \kpo}}{H^2(\Omega)}^2  + 4\,\equilp^{-1}\dext \Norm{\nabla p^{\varepsilon, \kpo}}{L^2(\Omega)^d}^2 + 4\equilq^{-1}\dext \Norm{\nabla \sqrt{q^{\varepsilon, \kpo}}}{L^2(\Omega)^d}^2 \Bigg)\\
\label{eq:entropy-h-limit}
& 
+ \int_{\Omega} s_p(p^{\varepsilon, n}) \dx + \int_{\Omega} s_q(q^{\varepsilon, n}) \dx \le 
\int_{\Omega} s_p(p_0) \dx + \int_{\Omega} s_q(q_0) \dx + C_f T\left(\equilq|\Omega|+C^\ast\right),
\end{align}
where~$C^*$ is the constant defined in~\eqref{def:C-ast}.
\end{theorem}
\begin{proof}
We prove the result using an induction argument on the time steps.
\paragraph{Approximation of the initial conditions.} Set~$(p_m^{\varepsilon, 0}, q_m^{\varepsilon, 0}) \coloneqq  (\PiW p_0, \PiW q_0) \in \Whm \times \Whm$. Using the orthogonality and approximation properties of~$\PiW$, for all~$\psi \in H^2(\Omega)$, we have
\begin{equation*}
(p_0 - p_m^{\varepsilon, 0}, \psi)_{\Omega} = (p_0, \psi - \PiW \psi)_{\Omega} \le C h_m^2 \Norm{p_0}{L^2(\Omega)} \Seminorm{\psi}{H^2(\Omega)},
\end{equation*}
for some positive constant~$C$ independent of~$h_m$ and~$\psi$. An analogous estimate can be obtained for~$q_0 - q_m^{\varepsilon, 0}$. Therefore, as~$m \to 0$, we get
\begin{alignat*}{3}
    (p_m^{\varepsilon, 0}, \psi) & \to (p_0, \psi) & & \qquad \forall \psi \in H^2(\Omega), \\
    (q_m^{\varepsilon, 0}, \psi) & \to (q_0, \psi) & & \qquad \forall \psi \in H^2(\Omega).
\end{alignat*}

\paragraph{Convergence of~$w_{\star, m}^{\varepsilon, \n}$.} The discrete entropy stability estimate~\eqref{eq:entropy2_nosigmaq} implies that, for~$n = 1, \ldots, N_t$ and~$\star = p, q$, the sequence~$\{w_{\star, m}^{\varepsilon, \n}\}_{m \in \N}$ is bounded in the DG
norm uniformly with respect to~$h_m$. Moreover, due to Lemma~\ref{lemma:LDG-product}, \ref{P2}, it is also bounded in the~$L^{\infty}(\Omega)$ norm uniformly with respect to~$h_m$. Then, we apply Lemma~\ref{lemma:LDG-product}, part~\ref{P3} and obtain the existence of some functions~$w_p^{\varepsilon,\n}$, $w_q^{\varepsilon, \n} \in H^2(\Omega)$ such that, up to subsequences that we still denote by~$\{w_{\star, m}^{\varepsilon, \n}\}_{m \in \N}$, it holds
\begin{subequations}
\begin{alignat}{3}
\label{eq:weak-nablaLDG-wm}
\nablaLDG w_{\star, m}^{\varepsilon, \n} & \rightharpoonup \nabla w_{\star}^{\varepsilon, \n} & & \qquad \text{ weakly in }L^2(\Omega)^d, \\
\label{eq:strong-wm}
w_{\star, m}^{\varepsilon, \n} & \to w_{\star}^{\varepsilon, \n} & & \qquad \text{ strongly in } L^q(\Omega),
\end{alignat}
\end{subequations}
with~$1 \le q < \infty$ (if~$d = 2$) or~$1 \le q < 6$ (if~$d = 3$). Moreover, by~\cite[Thm.~4.9 in \S4.2]{Brezis:2011}, we can extract from~$\{w_{\star, m}^{\varepsilon, \n}\}_{m \in \N}$ another subsequence that converges %
to~$w_{\star}^{\varepsilon, \n}$
almost everywhere in~$\Omega$. 

\paragraph{Convergence of~$u_{\star}(w_{\star, m}^{\varepsilon, \n})$.} 
Let~$\{w_{\star, m}^{\varepsilon, \n}\}_{m \in \N}$ be the subsequence extracted in the previous step. Since~$u_p : \R^d \to (0, \equilp)$ is continuous, %
the sequence~$\{\up(w_{p, m}^{\varepsilon, \n})\}_{m \in \N}$ converges almost everywhere in~$\Omega$ to~$\up(w^{\varepsilon, \n})$, and is uniformly bounded in the~$L^{\infty}(\Omega)$ norm by~$\equilp$. 
Thus, the convergence in~\eqref{eq:strong-u_p} is a consequence of the dominated convergence theorem (see, e.g., \cite[Thm.~4.2 in \S4.1]{Brezis:2011}).

The proof of~\eqref{eq:strong-u_q} is less immediate as~$u_q$ is not uniformly bounded. 
In this case, we first use the discrete entropy stability estimates in Theorem~\ref{TH::DISCRETE_ENTROPY} and the discrete compact embedding in Lemma~\ref{lemma:LDG-product}, part~\ref{P2}, to conclude that~$\{w_{q, m}^{\varepsilon, \n}\}_{m \in \N}$ is uniformly bounded in~$L^{\infty}(\Omega)$ with respect to~$h_m$ (recall that~$\varepsilon$ and~$\Tt$ are fixed). This implies that $w_{q, m}^{\varepsilon, \n}(\bx) \in \mathcal{K}_q$ almost everywhere in~$\Omega$, for some compact set~$\mathcal{K}_q \subset \R$ independent of~$m$. 
Consequently, since~$u_q$ is continuous on~$\R$, $u_q(w_{q, m}^{\varepsilon, \n})(\bx)$ belongs to the compact set~$u_q(\mathcal{K}_q)$ for almost all~$\bx\in\Omega$. 
The convergence in~\eqref{eq:strong-u_q} then follows analogously to that in~\eqref{eq:strong-u_p}.

In a similar way, it can be proven that, for~$\star = p, q$, 
\begin{equation}
\label{eq:strong-f-u}
    f_{\star}(u_p(w_{p, m}^{\varepsilon, \npo}), u_q(w_{q, m}^{\varepsilon, \npo})) \to f_{\star}(u_p(w_{p}^{\varepsilon, \npo}), u_q(w_{q}^{\varepsilon, \npo})) \quad \text{ strongly in }L^r(\Omega) \text{ for all~$r \in [1, \infty)$.}
\end{equation}

\paragraph{Semidiscrete-in-time limit problem.}
In order to prove the last part of the theorem, we first rewrite the fully discrete scheme~\eqref{EQ::MATRICIAL_COMPACT} in the following variational form: for~$\star = p, q$,
\begin{alignat}{3}
\label{eq:weak-form-sigma-hm}
\left( \D s_{\star}''(u_{\star}(w_{\star, m}^{\varepsilon, \npo})) \sigmahmstar^{\varepsilon, \npo}, \bphim \right)_{\Omega} & = - \left(\D \nablaLDG w_{\star, m}^{\varepsilon, \npo},  \bphim \right)_{\Omega} & & \qquad \forall \bphim \in \Rhm, \\
\nonumber
\varepsilon  \left(w_{\star, m}^{\varepsilon, \npo}, \psim\right)_{\LDG} + \frac{1}{\tau_{n+1}} & \Big(u_{\star}(w_{\star, m}^{\varepsilon, \npo})  - u_{\star, m}^{\varepsilon, \n}, \psim \Big)_{\Omega}  \\
\nonumber
\quad - \left(\D \sigmahmstar^{\varepsilon, \npo}, \nablaLDG \psim\right)_{\Omega} & + \sum_{F \in \Fhmo}  \left(\eta_F \mathsf{h}^{-1} \jump{w_{\star, m}^{\varepsilon, \npo}}_{\sf N}, \jump{\psim}_{\sf N} \right)_{F} \\
\label{eq:weak-form-w-hm}
& = \left(f_{\star}(u_p(w_{p, m}^{\varepsilon, \npo}), u_q(w_{q, m}^{\varepsilon, \npo}), \psim \right)_{\Omega} & & \qquad \forall \psim \in \Whm.
\end{alignat}
We aim to pass to the limit in each term in~\eqref{eq:weak-form-w-hm}.

Since~$\boldsymbol{\phi} \mapsto D \boldsymbol{\phi}$ defines a continuous linear operator, using the uniform boundedness of~$\D$ and the weak convergence in~\eqref{eq:weak-nablaLDG-wm} of~$\nablaLDG w_{\star, m}^{\varepsilon, \npo}$,  we can deduce (see~\cite[Thm.~3.10 in~\S3.3]{Brezis:2011})
\begin{equation}
\label{eq:weak-D-nablaLDG-wm}
\D \nablaLDG w_{\star, m}^{\varepsilon, \npo} \rightharpoonup \D \nabla w_{\star}^{\varepsilon, \npo} \quad \text{ weakly in }L^2(\Omega)^d.
\end{equation}
Moreover, the discrete entropy stability estimate~\eqref{eq:entropy2_nosigmaq}, combined with bound~\eqref{eq:bound-sigmahq}, implies that~$\{\sigmahmstar^{\varepsilon, \npo}\}_{m \in \N}$ is uniformly bounded in~$L^2(\Omega)^d$ with respect to~$h_m$ (recall again that~$\varepsilon$ and~$\Tt$ are fixed). Since~$L^2(\Omega)^d$ is (trivially) reflexive, we can extract a subsequence, still denoted by~$\{\sigmahmstar^{\varepsilon, \npo}\}_{m \in \N}$, such that (see \cite[Thm.~3.18 in~\S3.5]{Brezis:2011})
\begin{equation*}
    \sigmahmstar^{\varepsilon, \npo} \rightharpoonup \vsigma_{\star}^{\varepsilon, \npo} \quad \text{ weakly in } L^2(\Omega)^d,
\end{equation*}
for some~$\vsigma_{\star}^{\varepsilon, \npo} \in L^2(\Omega)^d$. Using the continuity and uniform boundedness of~$\D$, $s_p''(\cdot)$, and~$s_q''(\cdot)$ on~$\R$, $u_p(\mathcal{K}_p)$, and~$u_q(\mathcal{K}_q)$, respectively, we have
\begin{subequations}
\begin{alignat}{3}
\label{eq:weak-linear-sigma}
    \D \sigmahmstar^{\varepsilon, \npo} & \rightharpoonup \D \vsigma_{\star}^{\varepsilon, \npo} & & \quad \text{ weakly in }L^2(\Omega)^d, \\
\label{eq:weak-nonlinear-sigma}
    \D s_{\star}''(u_{\star}(w_{\star, m}^{\varepsilon, \npo})) \sigmahmstar^{\varepsilon, \npo} & \rightharpoonup \D s_{\star}''(u_{\star}(w_{\star}^{\varepsilon, \npo})) \vsigma_{\star}^{\varepsilon, \npo} & & \quad \text{ weakly in }L^2(\Omega)^d.
\end{alignat}
\end{subequations}
It only remains to prove that~$\vsigma_{\star}^{\varepsilon, \npo} = - \nabla u_{\star}(w_{\star}^{\varepsilon, \npo})$ almost everywhere in~$\Omega$. 
The density of~$H^1(\Omega)$ in~$L^2(\Omega)$, and the approximation properties of~$\Rhm$ (see~\cite[Thm.~4.4.20 in~\S4.4]{Brenner_Scott:2008}) imply that, for any~$\bphi \in L^2(\Omega)^d$, there is a sequence~$\{\bphim\}_{m \in \N}$ that converges strongly to~$\bphi$ in~$L^2(\Omega)^d$. 
This, combined with equation~\eqref{eq:weak-form-sigma-hm}, the convergences in~\eqref{eq:weak-D-nablaLDG-wm} and~\eqref{eq:weak-nonlinear-sigma}, and the chain rule~$\nabla w = \nabla s_{\star}'(u_{\star}(w)) = s_{\star}''(u_{\star}(w)) \nabla u_{\star}(w)$, implies 
\begin{equation}
\label{eq:sigma-nabla-w-identity}
    \left(\D s_{\star}''(u_{\star}(w_{\star}^{\varepsilon, \npo})) \vsigma_{\star}^{\varepsilon, \npo}, \bphi \right)_{\Omega} = -\left(\D s_{\star}''(u_{\star}(w_{\star}^{\varepsilon, \npo})) \nabla u_{\star}(w_{\star}^{\varepsilon, \npo}), \bphi \right)_{\Omega} \qquad \forall \bphi \in L^2(\Omega)^d. 
\end{equation}
Taking~$\bphi = \vsigma_{\star}^{\varepsilon, \npo} + \nabla u_{\star} (w_{\star}^{\varepsilon, \npo})$ in~\eqref{eq:sigma-nabla-w-identity} and using that~$\D$ is uniformly positive definite with constant larger than or equal to~$\dext$, we obtain
\begin{equation*}
    \dext \essinf_{\bx \in \Omega} s_{\star}''(u_{\star}(w_{\star}^{\varepsilon, \npo}(\bx))) \Norm{\vsigma_{\star}^{\varepsilon, \npo} + \nabla u_{\star} (w_{\star}^{\varepsilon, \npo})}{L^2(\Omega)^d}^2 = 0.
\end{equation*}
Recalling that~$\essinf_{\bx \in \Omega} s_p''(u_p(w_p^{\varepsilon, \npo}(\bx))) \geq 4 \equilp^{-1}$ and~$\essinf_{\bx \in \Omega} s_q''(u_q(w_q^{\varepsilon, \npo}(\bx))) = \essinf_{\by \in \mathcal{K}_q} s_q''(u_q(\by)) > 0$, we deduce that~$\vsigma_{\star}^{\varepsilon, \npo} = - \nabla u_{\star}(w_{\star}^{\varepsilon, \npo})$ almost everywhere in~$\Omega$, as desired.

For any~$\psi \in H^2(\Omega)$, let~$\{\psim\}_{m \in \N}$ be a sequence such that~$\psim \in \mathcal{W}^2(\Thm) \cap H^1(\Omega)$ for~$m \in \N$, and~$\psim \to \psi$ strongly in~$H^1(\Omega)$. Then, combining Lemma~\ref{lemma:LDG-product}, part~\ref{P3} with the convergences in~\eqref{eq:strong-u}, \eqref{eq:weak-linear-sigma}, and~\eqref{eq:strong-f-u}, as well as the fact that~$\jump{\psim}_{\sf N} = 0$, we obtain that the pair~$(w_p^{\varepsilon, \npo}, w_q^{\varepsilon, \npo})$ solves the semidiscrete-in-time formulation~\eqref{eq:h-limit-formulation}. 

\paragraph{Entropy stability.}
The entropy stability estimate in~\eqref{eq:entropy-h-limit} can be proven analogously to that in Theorem~\ref{TH::DISCRETE_ENTROPY}, also using~the bounds in~\eqref{eq:bound-sigmahq} and~\eqref{EQ::HETER_SQRTQ}.
\end{proof}

\subsection{Convergence to a weak solution of the continuous problem\label{SEC::tau-varepsilon}}
We now consider the semidiscrete-in-time limit problem~\eqref{eq:h-limit-formulation}, which admits a solution~$(\wp^{\varepsilon, \npo}, \wq^{\varepsilon, \npo})$; see Theorem~\ref{eq:strong-u}. 
For the sake of simplicity, we assume that~$\Tt$ is a uniform partition of~$(0, T)$ and that~$\varepsilon \le 1$. 
We prove that, as~$(\varepsilon, \tau) \to (0, 0)$, any sequence~$\{(w_p^{\varepsilon, \npo}, w_q^{\varepsilon, \npo})\}_{(\varepsilon, \tau)}$ of such solutions converges (up to a subsequence) to a weak solution of~\eqref{EQN::HETERODIMER} satisfying the physical bounds~\eqref{EQ:HETER_BOUNDSCQ}. The precise notion of weak solution is specified in Theorem~\ref{thm:convergence-eps-tau} below.

Based on the sequence of functions~$\{(w_p^{\varepsilon, \npo}, w_q^{\varepsilon, \npo})\}_{(\varepsilon,\tau)}$ at the discrete times~$\{\tn\}_{n = 0}^{N_t}$,  %
we conveniently introduce the piecewise constant functions in time~$w_p^{\varepsilon, (\tau)} : \QT \to \R$ and~$w_q^{\varepsilon, (\tau)} : \QT \to \R$, defined on the whole space--time domain~$\QT$, as follows:
\begin{equation*}
\begin{split}
    w_p^{\varepsilon, (\tau)}(\cdot, t) & \coloneqq  w_p^{\varepsilon, \n}(\cdot) \qquad \text{for all~$t \in (\tnmo, \tn]$ with~$n \in \{1, \ldots, N_t\}$},\\
    w_q^{\varepsilon, (\tau)}(\cdot, t) & \coloneqq  w_q^{\varepsilon, \n}(\cdot) \qquad \text{for all~$t \in (\tnmo, \tn]$ with~$n \in \{1, \ldots, N_t\}$}
\end{split}
\end{equation*}
We further use the notation~$p^{\varepsilon, (\tau)} \coloneqq  u_p(w_p^{\varepsilon, (\tau)}) : \QT \to (0, \equilp)$ and~$q^{\varepsilon, (\tau)} \coloneqq  u_q(w_q^{\varepsilon, (\tau)}) : \QT \to (0, \infty)$, and define the discrete derivatives
\begin{alignat*}{3}
    \partial_\tau %
    p^{\varepsilon, (\tau)} (\cdot, t) & \coloneqq  \frac{1}{\tau} \big(p^{\varepsilon, \n}(\cdot) -  p^{\varepsilon, \nmo}(\cdot) \big) & & \qquad \text{for all~$t \in (\tnmo, \tn]$ with~$n \in \{1, \ldots, N_t\}$},  \\
    \partial_\tau %
    q^{\varepsilon, (\tau)} (\cdot, t) & \coloneqq  \frac{1}{\tau} \big(q^{\varepsilon, \n}(\cdot) -  q^{\varepsilon, \nmo}(\cdot)\big) & & \qquad \text{for all~$t \in (\tnmo, \tn]$ with~$n \in \{1, \ldots, N_t\}$},
\end{alignat*}
where~$p^{\varepsilon, (0)} \coloneqq  p_0$ and~$q^{\varepsilon, (0)} \coloneqq  q_0$.

\begin{theorem}[Convergence as~$(\varepsilon, \tau) \to (0, 0)$] 
\label{thm:convergence-eps-tau}
Assume that~$\tau < 1/(2C_f)$, and that the initial conditions~$p_0$ and~$q_0$ satisfy~\eqref{eq:ic}.
Let~$r = 2 + 4/d$, $\mu = (d+2)/(d+1)$, and~$\mu' = \mu/(\mu - 1)$. Then, there exists a pair~$(p,q)$, with
\begin{alignat*}{3}
p\in H^1(0,T;H^2(\Omega)')\cap L^2(0,T;H^1(\Omega))\cap L^{\nu}(\QT) & \qquad \text{ for all~$\nu \in [1, \infty)$}, \\
q\in W^{1,\mu}(0,T;W^{2,\mu'}(\Omega)')\cap L^\mu(0,T;W^{1,\mu}(\Omega)) \cap L^{\omega}(\QT) 
& \qquad \text{ for all~$\omega \in [1, r/2)$},
\end{alignat*}
such that, as~$(\varepsilon, \tau) \to (0, 0)$, up to subsequences that are not relabeled, the sequence~$\{(w_p^{\varepsilon, (\tau)}, w_q^{\varepsilon, (\tau)})\}_{(\varepsilon, \tau)}$ of space--time reconstructions of any sequence of solutions~$\{(w_p^{\varepsilon, \npo}, w_q^{\varepsilon, \npo})\}_{(\varepsilon, \tau)}$  to the semidiscrete-in-time formulation~\eqref{eq:h-limit-formulation} satisfies
\begin{subequations}
\begin{alignat}{3}
    \label{eq:weak-w}
    \sqrt{\varepsilon} w_{\star}^{\varepsilon, (\tau)} & \rightharpoonup 0 & & \quad \text{ weakly in $L^2(\QT)$ for~$\star = p, q$}, \\
    \label{eq:weak-nabla-w}
    \sqrt{\varepsilon} \nabla w_{\star}^{\varepsilon, (\tau)} & \rightharpoonup 0 & & \quad \text{ weakly in $L^2(\QT)^d$ for~$\star = p, q$}, \\
    \label{eq:weak-hessian-w}
    \sqrt{\varepsilon} \mathcal{H} w_{\star}^{\varepsilon, (\tau)} & \rightharpoonup 0 & & \quad \text{ weakly in $L^2(\QT)^{d \times d}$ for~$\star = p, q$}, \\
\label{eq:strong-p-tau-epsilon}
    p^{\varepsilon, (\tau)} & \to p & & \quad \text{ strongly in~$L^{\nu}(\QT)$ for all~$\nu \in [1, \infty)$ and~a.e. in~$\QT$}, \\
    \label{eq:weak-D-nabla-p}
    \D \nabla p^{\varepsilon, (\tau)} & \rightharpoonup \D \nabla p & & \quad \text{ weakly in~$L^2(\QT)^d$}, \\
    \label{eq:weak-dpt-p}
    \dptau p^{\varepsilon, (\tau)} & \rightharpoonup \dpt p & & \quad \text{ weakly in~$L^2(0, T; H^2(\Omega)')$}, \\
    \label{eq:strong-q}
    q^{\varepsilon, (\tau)} & \to  q & & \quad \text{ strongly in~$L^{\omega}(\QT)$ for all~$\omega \in [1, r/2)$ and a.e. in~$\QT$}, \\
    \label{eq:weak-nabla-q}
    \D \nabla q^{\varepsilon, (\tau)} & \rightharpoonup \D \nabla q & & \quad \text{ weakly in~$L^{\mu}(\QT)^d$},\\
    \label{eq:weak-dpt-q} 
    \dptau q & \rightharpoonup \dpt q & & \quad \text{ weakly in~$L^{\mu}(0, T; W^{2, \mu'}(\Omega)')$}, \\
    \label{eq:weak-fpq}
    f_{\star}(p^{\varepsilon, (\tau)}, q^{\varepsilon, (\tau)}) & \rightharpoonup f_{\star}(p, q) & & \quad \text{ weakly in~$L^{r/2}(\QT)$ for~$\star = p, q$.}
\end{alignat}
\end{subequations}
Moreover, the pair~$(p, q)$ solves
\begin{subequations}
\label{eq:final-weak-system}
\begin{alignat}{3}
    \nonumber
    \int_0^T \langle \dpt p, \psi_p \rangle_{H^2(\Omega)' \times H^2(\Omega)} \dt & + \int_{0}^T \int_{\Omega} \D \nabla p \cdot \nabla \psi_p \dx \dt \\
    & = \int_0^T \int_{\Omega} f_p(p, q) \psi_p \dx \dt & & \qquad \forall \psi_p \in L^2(0, T; H^2(\Omega)), \\
    \nonumber
    \int_0^T \langle \dpt q, \psi_q \rangle_{W^{2, \mu'}(\Omega)' \times W^{2, \mu'}(\Omega)}\dt & + \int_0^T \int_{\Omega} \D \nabla q \cdot \nabla \psi_q \dx \dt \\
    & = \int_0^T \int_{\Omega} f_q(p, q) \psi_q \dx \dt & & \qquad \forall \psi_q \in L^{\mu'}(0, T; W^{2, \mu'}(\Omega)).
\end{alignat}
\end{subequations}
\end{theorem}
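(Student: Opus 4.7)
The plan is to follow the standard compactness strategy for boundedness-by-entropy schemes, adapted to the present degenerate situation. First, I would collect \emph{uniform} (in $\varepsilon$ and $\tau$) a priori bounds for the sequence $\{(w_p^{\varepsilon,(\tau)},w_q^{\varepsilon,(\tau)})\}_{(\varepsilon,\tau)}$ starting from the entropy stability inequality~\eqref{eq:entropy-h-limit}. This yields: (a) $\sqrt{\varepsilon}\,w_\star^{\varepsilon,(\tau)}$ bounded in $L^2(0,T;H^2(\Omega))$, from which~\eqref{eq:weak-w}--\eqref{eq:weak-hessian-w} follow immediately by weak compactness; (b) $p^{\varepsilon,(\tau)}\in L^\infty(Q_T)$ uniformly with values in $[0,\equilp]$ by construction of $u_p$, together with $\nabla p^{\varepsilon,(\tau)}$ bounded in $L^2(Q_T)^d$; (c) $\sqrt{q^{\varepsilon,(\tau)}}$ bounded in $L^2(0,T;H^1(\Omega))$ (from the $\nabla\sqrt{q}$ term in~\eqref{eq:entropy-h-limit}) and in $L^\infty(0,T;L^2(\Omega))$ (since $q\log q \le s_q(q)+\equilq$ implies $q$ bounded in $L^\infty(0,T;L^1(\Omega))$). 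A Gagliardo--Nirenberg interpolation then gives $\sqrt{q^{\varepsilon,(\tau)}}\in L^{2(d+2)/d}(Q_T)$, i.e., $q^{\varepsilon,(\tau)}\in L^{r/2}(Q_T)$ with $r=2+4/d$; and by Hölder with $\nabla q=2\sqrt{q}\,\nabla\sqrt{q}$, we obtain $\nabla q^{\varepsilon,(\tau)}$ bounded in $L^\mu(Q_T)^d$ with $\mu=(d+2)/(d+1)$.

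Next, I would derive dual estimates on the discrete time derivatives. Testing the semidiscrete formulation~\eqref{eq:h-limit-formulation} with $\star=p$ against $\psi\in H^2(\Omega)$ and exploiting that $\sqrt{\varepsilon}\,w_p^{\varepsilon,(\tau)}$ is bounded in $L^2(0,T;H^2(\Omega))$ gives $\dptau p^{\varepsilon,(\tau)}$ bounded in $L^2(0,T;H^2(\Omega)')$: the penalty contribution is of order $\sqrt{\varepsilon}$ and hence vanishes in the limit, while the diffusion and reaction terms are controlled by the $L^2(0,T;H^1(\Omega))$ bound on $p^{\varepsilon,(\tau)}$ and the $L^\infty$ bound combined with $q^{\varepsilon,(\tau)}\in L^{r/2}$. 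For the $q$ equation, testing against $\psi\in W^{2,\mu'}(\Omega)$ and using the embedding $W^{2,\mu'}(\Omega)\hookrightarrow W^{1,\infty}(\Omega)$ (valid since $\mu'(d+1)=d+2>d$) together with $\nabla q^{\varepsilon,(\tau)}\in L^\mu(Q_T)^d$ yields $\dptau q^{\varepsilon,(\tau)}$ bounded in $L^\mu(0,T;W^{2,\mu'}(\Omega)')$.

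With these ingredients, I would invoke a discrete Aubin--Lions--Simon lemma (as in Dreher--Jüngel or Gallou\"et--Latch\'e) to extract subsequences satisfying the strong convergences~\eqref{eq:strong-p-tau-epsilon} and~\eqref{eq:strong-q}: $p^{\varepsilon,(\tau)}\to p$ in $L^2(Q_T)$ and a.e., then upgraded to $L^\nu(Q_T)$ for any finite $\nu$ via the uniform $L^\infty$ bound and dominated convergence; $q^{\varepsilon,(\tau)}\to q$ in $L^1(Q_T)$ and a.e., then upgraded to $L^\omega(Q_T)$ for every $\omega<r/2$ by Vitali's theorem using equi-integrability from the $L^{r/2}$ bound. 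The weak convergences~\eqref{eq:weak-D-nabla-p}, \eqref{eq:weak-dpt-p}, \eqref{eq:weak-nabla-q}, \eqref{eq:weak-dpt-q} follow by reflexivity from the uniform bounds, while~\eqref{eq:weak-fpq} follows from the a.e.\ convergence, the continuity of $f_\star$, and equi-integrability in $L^{r/2}(Q_T)$ (since $f_p$, $f_q$ grow at most linearly in $pq$, and $p$ is bounded).

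Finally, I would pass to the limit in~\eqref{eq:h-limit-formulation} written in space--time form against test functions that are piecewise constant in time and in $H^2(\Omega)$ (resp.\ $W^{2,\mu'}(\Omega)$) in space: the time-discrete derivative converges to $\dpt p$ (resp.\ $\dpt q$) in the appropriate dual; the diffusion term converges by weak-strong pairing; the reaction term converges by~\eqref{eq:weak-fpq}; the penalty terms vanish because $\sqrt{\varepsilon}$ multiplies bounded quantities. A density argument extends the identity to $\psi_p\in L^2(0,T;H^2(\Omega))$ and $\psi_q\in L^{\mu'}(0,T;W^{2,\mu'}(\Omega))$, yielding~\eqref{eq:final-weak-system}. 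The main obstacle is the degeneracy in the $q$ equation: since only $\nabla\sqrt{q}$ (and not $\nabla q$) is controlled in $L^2$, all estimates on $q$ must be funneled through the Gagliardo--Nirenberg-based improvement, forcing the use of the non-Hilbertian spaces $L^\mu$ and $W^{2,\mu'}(\Omega)'$ and requiring careful tracking of integrability exponents throughout the limit procedure.
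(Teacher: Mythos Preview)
Your proposal is correct and follows essentially the same approach as the paper's proof: uniform bounds from the entropy estimate~\eqref{eq:entropy-h-limit}, Gagliardo--Nirenberg interpolation for~$\sqrt{q}$ to obtain~$q\in L^{r/2}(Q_T)$, H\"older with $\nabla q=2\sqrt{q}\,\nabla\sqrt{q}$ for the $L^\mu$ gradient bound, dual bounds on the discrete time derivatives, the Dreher--J\"ungel discrete Aubin lemma for strong compactness, and then passage to the limit term by term. The only cosmetic differences are that the paper uses $s_q(q)\ge q-2\equilq$ (rather than your $q\log q$ inequality) for the $L^\infty(0,T;L^1)$ bound on~$q$, and invokes \cite[Lemma~5]{Chen_Jungel:2006} (rather than Vitali explicitly) to upgrade the strong convergence of~$q$; also, your parenthetical justification ``$\mu'(d+1)=d+2>d$'' for the Sobolev embedding contains a typo (in fact $\mu'=d+2$, and the embedding $W^{2,\mu'}(\Omega)\hookrightarrow W^{1,\infty}(\Omega)$ holds since $2-d/\mu'>1$), but the embedding itself is valid and the paper bypasses it anyway by using H\"older directly with the exponent $r/(r-2)\le\mu'$.
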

\begin{proof}
We first observe that, from the entropy stability estimate~\eqref{eq:entropy-h-limit} , it follows that, for any~$t \in (0, T]$,
\begin{alignat}{3}
\nonumber
\varepsilon \sum_{\star = p, q} & \Norm{w_{\star}^{\varepsilon, (\tau)}}{L^2(0, t; H^2(\Omega))}^2 + 4 \equilp^{-1} \dext \Norm{\nabla p^{\varepsilon, (\tau)}}{L^2(0, t; L^2(\Omega)^d)}^2 + 4 \equilq^{-1} \dext \Norm{\nabla \sqrt{q^{\varepsilon, (\tau)}}}{L^2(0, t; L^2(\Omega)^d)}^2 \\
\label{eq:xt-entropy-estimate}
& + \int_{\Omega} s_p(p^{\varepsilon, (\tau)}(\cdot, t)) \dx + \int_{\Omega} s_q(q^{\varepsilon, (\tau)}(\cdot, t)) \dx \le 
\int_{\Omega} s_p(p_0) \dx + \int_{\Omega} s_q(q_0) \dx + C_f T\left(\equilq|\Omega|+C^\ast\right).
\end{alignat}
We point out that the norm in~$L^2(0, t; H^2(\Omega))$ appearing in~\eqref{eq:xt-entropy-estimate} is defined as
\[
\Norm{w}{L^2(0, t; H^2(\Omega))}^2 = \Norm{w}{L^2(0, t;L^2(\Omega))}^2 + \Norm{\nabla w}{L^2(0, t; L^2(\Omega)^d)}^2 + \Norm{\mathcal{H} w}{L^2(0, t; L^2(\Omega)^{d\times d})}^2.
\]

For~$\star = p, q$, estimate~\eqref{eq:xt-entropy-estimate} implies that
\begin{equation}\label{eq:unifboundw}
\text{$\sqrt{\varepsilon}\Norm{ w_{\star}^{\varepsilon, (\tau)}}{L^2(0, T; H^2(\Omega))}$ is uniformly bounded 
with respect to~$\tau$ and~$\varepsilon$,}
\end{equation}
which leads to~\eqref{eq:weak-w}, \eqref{eq:weak-nabla-w}, and~\eqref{eq:weak-hessian-w}.

For the proofs of the limits involving~$p^{\varepsilon, (\tau)}$, we proceed along the lines of~\cite[Thm.~2]{Jungel:2015}.  The definition of~$p^{\varepsilon, (\tau)}$ 
as~$p^{\varepsilon, (\tau)} = u_p(w_p^{\varepsilon, (\tau)})$, and the entropy stability estimate~\eqref{eq:xt-entropy-estimate} imply
that
\begin{equation}\label{eq:unifbounds}
\text{$\Norm{p^{\varepsilon, (\tau)}}{L^{\infty}(\QT)}$,\ $\Norm{\nabla p^{\varepsilon, (\tau)}}{L^2(\QT)^d}$, and~$\Norm{\nabla \sqrt{q^{\varepsilon, (\tau)}}}{L^2(\QT)^d}$ are uniformly bounded with respect to~$\tau$ and~$\varepsilon$.}
\end{equation}
As for~$q^{\varepsilon, (\tau)}$, we use the inequality~$s_q(q) \geq q - 2 \equilq$ for all~$q > 0$ to obtain
\begin{equation}
\int_{\Omega} q^{\varepsilon, (\tau)}(\cdot, t) \dx \le \int_{\Omega} s_q(q^{\varepsilon, (\tau)}(\cdot, t)) + 2 \equilq |\Omega| \qquad \forall t \in (0, T],
\end{equation}
which implies that
\begin{equation}\label{eq:Linf}
\text{$\Norm{q^{\varepsilon, (\tau)}}{L^{\infty}(0, T; L^1(\Omega))}$ is uniformly bounded with respect to~$\varepsilon$ and~$\tau$.}
\end{equation}

Denote by~$\Pi^{(\tau)}$ the~$L^2(0, T)$-orthogonal projection into~$\Pp{0}{\Tt}$.
From the semidiscrete-in-time formulation~\eqref{eq:h-limit-formulation} with~$\star=p$ and the H\"older inequality, for all~$\psi \in L^2(0, T; H^2(\Omega))$, we have
\begin{alignat}{3}
\nonumber
\int_0^T \int_{\Omega} \dptau p^{\varepsilon, (\tau)} \psi \dx \dt & = \int_0^T \int_{\Omega} \dptau p^{\varepsilon, (\tau)} \Pi^{(\tau)} \psi \dx \dt & \\
\nonumber
& \le \varepsilon \Norm{w_p^{\varepsilon, (\tau)}}{L^2(0, T; H^2(\Omega))} \Norm{\Pi^{(\tau)} \psi}{L^2(0, T; H^2(\Omega))} \\
\nonumber
& \quad + \Norm{\D}{L^{\infty}(\Omega)^{d\times d}} \Norm{\nabla p^{\varepsilon, (\tau)}}{L^2(\QT)^d}
\Norm{\Pi^{(\tau)} \nabla \psi}{L^2(\QT)^d}
\\
\label{eq:bound-dtp}
& \quad + \int_0^T \int_{\Omega} |f_p(p^{\varepsilon, (\tau)}, q^{\varepsilon, (\tau)})|\, |\Pi^{(\tau)} \psi| \dx \dt.
\end{alignat}
The last term on the right-hand side of~\eqref{eq:bound-dtp} can be estimated as follows:
\begin{alignat}{3}
\nonumber
\int_0^T \int_{\Omega} |f_p(p^{\varepsilon, (\tau)}, q^{\varepsilon, (\tau)})| |\Pi^{(\tau)} \psi| \dx \dt & \le (\equilp\lambda_p  +\kappa_p)\Norm{\Pi^{(\tau)} \psi}{L^1(\QT)} + \equilp \mu_{pq} \Norm{q}{L^{\infty}(0, T; L^1(\Omega))} \Norm{\Pi^{(\tau)} \psi}{L^1(0, T; L^{\infty}(\Omega))} \\
\nonumber
& \lesssim (\equilp\lambda_p  +\kappa_p)  \Norm{\psi}{L^1(\QT)} + \equilp \mu_{pq} \Norm{q}{L^{\infty}(0, T; L^1(\Omega))} \Norm{\psi}{L^1(0, T; L^{\infty}(\Omega))} \\
\label{eq:bound-fp}
& \lesssim \Norm{\psi}{L^2(0, T; H^2(\Omega))},
\end{alignat}
where the hidden constants are independent of~$\varepsilon$, $\tau$, and~$\psi$, and, in the second step, we have used the stability properties of~$\Pi^{(\tau)}$ (see~\cite[Thm. 18.16(ii)]{Ern_Guermond-I:2020}).
Combining~\eqref{eq:bound-fp} with~\eqref{eq:bound-dtp}, taking into account that~$\varepsilon \Norm{w_p^{\varepsilon, (\tau)}}{L^2(0, T; H^2(\Omega))}$ and~$\Norm{\nabla p^{\varepsilon, (\tau)}}{L^2(Q_T)^d}$ are uniformly bounded with respect to~$\tau$ and~$\varepsilon$, see~\eqref{eq:unifboundw} and~\eqref{eq:unifbounds}, we deduce that
\[
\text{$\Norm{\dptau p^{\varepsilon, (\tau)}}{L^2(0, T; H^2(\Omega)')}$ is uniformly bounded with respect to~$\tau$ and~$\varepsilon$.}
\]
We can then apply the Aubin lemma in the version in~\cite[Thm.~1]{Dreher_Jungel:2012} with $p=2$, $r=1$, $X=H^1(\Omega)$, $B=L^2(\Omega)$, and~$Y=H^2(\Omega)'$ (in the notation of~\cite[Thm.~1]{Dreher_Jungel:2012}). %
Consequently, as in the proof of~\cite[Thm.~2]{Jungel:2015}, up to a subsequence that is not relabeled, we obtain~\eqref{eq:strong-p-tau-epsilon}, \eqref{eq:weak-D-nabla-p}, and \eqref{eq:weak-dpt-p}.

We now turn to the limits involving~$q^{\varepsilon, (\tau)}$. Let~$r = 2 + 4/d$ and~$\theta r = 2$. Using the Gagliardo--Nirenberg inequality (see the version in~\cite[Thm.~1.24]{Roubivek:2005}) and the H\"older inequality, we get
\begin{alignat*}{3}
\Norm{\sqrt{q^{\varepsilon, (\tau)}}}{L^r(\QT)}^r & \le C_{\mathrm{GN}} \int_0^T  \Norm{\sqrt{q^{\varepsilon, (\tau)}}}{L^2(\Omega)}^{r(1 - \theta) } \Norm{ \sqrt{q^{\varepsilon, (\tau)}}}{H^1(\Omega)}^{r \theta } \dt  \\
 & \le C_{\mathrm{GN}} \Norm{\sqrt{q^{\varepsilon, (\tau)}}}{L^{\infty}(0, T; L^2(\Omega))}^{4/d} \Norm{\sqrt{q^{\varepsilon, (\tau)}}}{L^2(0, T; H^1(\Omega))}^2.
\end{alignat*}
Therefore, owing to~\eqref{eq:Linf} and~\eqref{eq:unifbounds}, for~$r = 4$ if~$d = 2$, and~$r = 10/3$ if~$d = 3$,
\begin{equation}
\label{eq:unifboundq}
\Norm{\sqrt{q^{\varepsilon, (\tau)}}}{L^r(\QT)} \text{ and } \Norm{q^{\varepsilon, (\tau)}}{L^{r/2}(\QT)} \text{ are uniformly bounded with respect to~$\varepsilon$ and~$\tau$.}
\end{equation}
Moreover, we define $\mu \coloneqq  (d + 2)/(d + 1)$. Using the identity~$\nabla q = 2 \sqrt{q} \,\nabla \sqrt{q}$ and the H\"older inequality\footnote{$\Norm{fg}{1}\le\Norm{f}{p}\Norm{g}{q}$ with~$p=r/\mu$, $q=r/(r-\mu)$, noting that $(r-\mu)=\frac{(d+2)^2}{d(d+1)}>0$.}, we obtain
\begin{equation*}
\begin{split}
\Norm{\nabla q^{\varepsilon, (\tau)}}{L^{\mu}(\QT)^d}^{\mu} = 2^{\mu} \Norm{\sqrt{q^{\varepsilon, (\tau)}}\, \nabla \sqrt{q^{\varepsilon, (\tau)}}}{L^{\mu}(\QT)^{d}}^{\mu} 
& \le 2^{\mu} \int_{\QT} %
\big(q^{\varepsilon, (\tau)}\big)^{\mu/2} |\nabla \sqrt{q^{\varepsilon, (\tau)}}|^{\mu} \dV %
\\
& \le 2^{\mu} \Big(\int_{\QT} \big(q^{\varepsilon, (\tau)}\big)^{r/2} \dV \Big)^{\mu/r} \Big(\int_{\QT} |\nabla \sqrt{q^{\varepsilon, (\tau)}}|^2 \dV  \Big)^{\frac{r-\mu}{r}} \\
& = 2^{\mu} \Norm{q^{\varepsilon, (\tau)}}{L^{r/2}(\QT)}^{\mu/2} \Norm{\nabla \sqrt{q^{\varepsilon, (\tau)}}}{L^2(\QT)^d}^{\frac{2(r-\mu)}{r}},
\end{split}
\end{equation*}
which implies that
\begin{equation}\label{eq:unifboundGradq}
\text{$\Norm{\nabla q^{\varepsilon, (\tau)}}{L^{\mu}(\QT)^d}$ is uniformly bounded with respect to~$\tau$ and~$\varepsilon$,}
\end{equation}
with~$\mu = 4/3$ for~$d = 2$, and~$\mu = 5/4$ for~$d = 3$. 

Proceeding similarly as for~\eqref{eq:bound-dtp}, for all~$\psi \in L^{\mu'}(0, T; W^{2, \mu'}(\Omega))$, we have
\begin{equation}
\label{eq:bound-dtq}
\begin{split}
\int_0^T \int_{\Omega} \dptau q^{\varepsilon, (\tau)} \psi \dx \dt & = \int_0^T \int_{\Omega} \dptau q^{\varepsilon, (\tau)} \Pi^{(\tau)} \psi \dx \dt \\
& \lesssim \varepsilon \Norm{w_q^{\varepsilon, (\tau)}}{L^2(0, T; H^2(\Omega))} \Norm{\psi}{L^2(0, T; H^2(\Omega))} \\
& \quad + \Norm{\D}{L^{\infty}(\Omega)^{d\times d}} \Norm{\nabla q^{\varepsilon, (\tau)}}{L^{\mu}(\QT)^d} \Norm{\nabla \psi}{L^{\mu'}(\QT)^d} \\
& \quad + \Norm{f_q(p^{\varepsilon, (\tau)}, q^{\varepsilon, (\tau)})}{L^{r/2}(\QT)} \Norm{\psi}{L^{r/(r-2)}(\QT)}, %
\end{split}
\end{equation}
where~$\mu' = \mu/(\mu - 1) \geq 2$ (namely, $\mu' = 4$ if~$d = 2$, and~$\mu' = 5$ if~$d = 3$) and~$r/(r-2) \le \mu'$. Since~$\Norm{p^{\varepsilon, (\tau)}}{L^{\infty}(\QT)}$ is bounded uniformly with respect to~$\varepsilon$ and~$\tau$ (see~\eqref{eq:unifbounds}), and the nonlinear terms~$\fp(p, q)$ and~$\fq(p, q)$ depend linearly on~$q$, we also have that 
\begin{equation}\label{eq:unifboundfpfq}
\text{$\Norm{f_p(p^{\varepsilon, (\tau)}, q^{\varepsilon, (\tau)})}{L^{r/2}(\QT)}$ and~$\Norm{f_q(p^{\varepsilon, (\tau)}, q^{\varepsilon, (\tau)})}{L^{r/2}(\QT)}$ are uniformly bounded with respect to~$\varepsilon$ and~$\tau$.}
\end{equation}
Then, from~\eqref{eq:bound-dtq}, together with~\eqref{eq:unifboundw}, \eqref{eq:unifboundGradq}, and~\eqref{eq:unifboundfpfq}, we conclude that
\begin{equation}
\label{eq:unifbounddptq}
\text{$\Norm{\dptau q^{\varepsilon, (\tau)}}{L^{\mu}(0, T; W^{2, \mu'}(\Omega)')}$ is uniformly bounded with respect to~$\varepsilon$ and~$\tau$.}
\end{equation}

By the Rellich--Kondrachov theorem, the space~$W^{1, \mu}(\Omega)$ is compactly embedded in~$L^{\mu}(\Omega)$. Since~$L^{\mu}(\Omega)$ is continuously embedded in~$W^{2, \mu'}(\Omega)'$, we can use again the Aubin lemma in the version in~\cite[Thm.~1]{Dreher_Jungel:2012} now with $p=\mu$, $r=1$, $X=W^{1, \mu}(\Omega)$, $B=L^{\mu}(\Omega)$, and~$Y=W^{2, \mu'}(\Omega)'$ (in the notation of~\cite[Thm.~1]{Dreher_Jungel:2012}), to conclude that there exists~$q \in L^{\mu}(\QT)$ such that, as~$(\varepsilon, \tau) \to (0, 0)$,
\begin{equation*}
q^{\varepsilon, (\tau)} \to q \quad \text{ strongly in~$L^{\mu}(\QT)$,}
\end{equation*}
and, due to~\cite[Thm.~4.9]{Brezis:2011}, up to a subsequence that is not relabeled, it also converges a.e. in~$\QT$. This, together with the uniform bound in~\eqref{eq:unifboundq} and~\cite[Lemma~5]{Chen_Jungel:2006}, leads to~\eqref{eq:strong-q}.
Furthermore, by weak compactness, the uniform bounds in~\eqref{eq:unifboundGradq} and~\eqref{eq:unifbounddptq} imply~\eqref{eq:weak-nabla-q} and~\eqref{eq:weak-dpt-q}.
The uniform boundedness in~\eqref{eq:unifboundfpfq}, the continuity of~$f_p(\cdot, \cdot)$ and~$f_q(\cdot, \cdot)$, and the fact that, up to subsequences, $p^{\varepsilon, (\tau)} \to p$ and~$q^{\varepsilon, (\tau)} \to q$ a.e. in~$\QT$ give~\eqref{eq:weak-fpq}. This completes the proof of~\eqref{eq:weak-w}--\eqref{eq:weak-fpq}.

Finally, the fact that~$(p, q)$ solves~\eqref{eq:final-weak-system} follows by taking suitable test functions and passing to the limit on each term of~\eqref{eq:h-limit-formulation}, and the proof is complete.
\end{proof}

\begin{remark}
As a byproduct of Theorem~\ref{thm:convergence-eps-tau}, we have that problem~\eqref{EQN::HETERODIMER} admits a weak solution~$(p,q)$ in the sense of~\eqref{eq:final-weak-system}.
\eremk
\end{remark}

\begin{remark}[Structure of the convergence analysis]
Our convergence analysis follows the same structure of the one presented in~\cite{Gomez-Jungel-Perugia:2024} for cross-diffusion systems.
In Sections~\ref{SEC::h-convergence} and~\ref{SEC::tau-varepsilon}, we first show convergence as~$h \to 0$, and then proceed to show that the resulting semidiscrete-in-time limit problem converges to a weak solution to system~\eqref{SEC::MODEL}  as~$\tau \to 0$ and~$\varepsilon \to 0$. 
The main bottleneck in proving simultaneous convergence in space and time results from the fact that the entropy stability estimate in Theorem~\ref{TH::DISCRETE_ENTROPY} provides a bound on~$\Norm{\whstar}{L^{\infty}(\Omega)}$ (with~$\star = p, q$) that is uniform only with respect to~$h$.
In addition, while~$\sigma_p = - \nabla p$ and~$\sigma_q = -\nabla q$ hold on the continuous level, these identities hold only weakly on the discrete level.
Consequently, the boundedness of~$\sigmahp$ and~$\sigmahq$ derived from the entropy stability estimate in Theorem~\ref{TH::DISCRETE_ENTROPY} is insufficient to close the discrete compactness argument. 
\eremk
\end{remark}

\section{Numerical results}\label{sec:results}
In this section, we present some numerical tests on two-dimensional space domains to assess the accuracy of the proposed structure-preserving LDG method. 
In Section~\ref{sec:test_case_1}, we discuss the convergence properties in space for a smooth exact solution that is linear in time. Then, in Section~\ref{sec:test_case_2}, we study the accuracy of the method in simulating a traveling-wave solution.
\par
All numerical simulations in this section are performed with the \texttt{lymph} library \cite{antonietti_lymph_2024}, implementing DG methods. We employ structured simplicial meshes with element diameter~$h$ for the space domain, and uniform time steps~$\tau$. 
Since method~\eqref{EQ::VARIATIONAL} %
is highly nonlinear, 
and the entropy stability estimate in Theorem~\ref{TH::DISCRETE_ENTROPY} is proven %
under the assumption of exact integration,
we employ a sufficiently accurate quadrature %
in the simulations. In particular, we %
use a Gauss--Jacobi quadrature rule with $(2\ell+1)^2$ nodes over triangles.
Moreover, the weight parameter~$\gamma_F$ is set to~$1/2$ for all internal facets. For the Newton iterations, given a small tolerance~$\mathsf{tol} >0$, we adopt the following stopping criterion:
\begin{equation}
\label{eq:stopping-criteria}
\min\left\{\sqrt{\|\whp^{k+1}-\whp^k\|_{L^2(\Omega)}^2+\|\whq^{k+1}-\whq^k\|_{L^2(\Omega)}^2}, \ |\mathsf{res}_{k+1}^p+\mathsf{res}_{k+1}^q|\right\} \leq \mathsf{tol},
\end{equation} 
where $\mathsf{res}_{k+1}^\star$ %
(with $\star=p,q$) is the residual of the algebraic system~\eqref{EQ::MATRICIAL_W} for the approximation of~$(\whp^{(n+1)},\whq^{(n+1)})$ at the~$(k+1)$th Newton's iteration.
In the convergence tests reported below, we measure the following~$L^2(\Omega)$ errors at the final time for the concentrations and the fluxes, respectively, of the two variables of the system:
\begin{equation*}
E_\star \coloneqq  \Norm{\star(\cdot, T) - u(\whstar^{(N)})}{L^2(\Omega)} \quad \text{and} \quad E_{\boldsymbol{\sigma}_\star} : = \Norm{\nabla\star(\cdot, T) + \sigmahstar^{(N)}}{L^2(\Omega)^d} \quad \text{with }\star=p,q.
\end{equation*}
While the convergence analysis in Section~\ref{sec:convergence} requires the constraint~$\ell \geq 2$, our numerical experiments perform well even for piecewise linear approximations ($\ell = 1$), as previously observed for similar applications in~\cite{Gomez-Jungel-Perugia:2024,Antonietti-Corti-Gomez-Perugia_2026}.
In all numerical experiments reported in this section, the penalty parameter~$\varepsilon$ is set to~$0$, as the Newton method shows stable and reliable convergence even without the penalty term (see Remark~\ref{RMK:PENALTY-TERM}). A posteriori, this indicates that the computed solutions remain sufficiently far from the singular limits. This is likely connected to the continuous structure of the problem, where the equilibrium~$(\equilp, 0)$ lying on the singular limit is unstable, while the stable equilibrium~$(\lambda_q/\mu_{pq}, \equilq)$ is well-separated from such limits; see Footnote~\ref{footnote:equilibria} below. We emphasize that this is not guaranteed in general, and taking~$\varepsilon>0$ may be necessary in practical computations, as also observed in previous works employing the structure-preserving LDG method applied to other model problems~\cite{Gomez-Jungel-Perugia:2024,Antonietti-Corti-Gomez-Perugia_2026}.

\subsection{Test case 1: Convergence analysis}
\label{sec:test_case_1}
For the numerical tests in this section, 
we consider the space domain~$\Omega=(0,1)^2$
and homogeneous Neumann boundary conditions on the boundary~$\Gamma \times (0, T)$. 
For the nonlinear Newton solver, we 
adopt the stopping criterion~\eqref{eq:stopping-criteria} with~$\mathsf{tol} = 10^{-10}$. 
For both species, we select the diffusion tensor~$\mathbf{D}=\mathbb{I}_2$, where~$\mathbb{I}_2$ represents the identity matrix of size~$2$. Concerning the reaction coefficients, we fix~$\lambda_p=\lambda_q= 1$ and the conversion term~$\mu_{pq}=0.5$. 

For this test case, instead of using a constant production rate~$\kappa_p$ for~$p$, we allow nonconstant source terms in both equations of~\eqref{EQN::HETERODIMER} in order to construct a manufactured solution that is linear in time. This choice allows us to highlight the properties of the space discretization, neglecting the error due to the time integration scheme. Then, we set $\equilp = \equilq = 1$ in the change of variables~\eqref{EQ::HETER_UPUQ}, independently of the other parameters, since the choice of nonconstant functions on the right-hand side of both equations breaks the equilibrium structure of system~\eqref{EQN::HETERODIMER}. Thus, we consider system~\eqref{EQN::HETERODIMER} with initial conditions and additional source terms on the two right-hand sides chosen so that the problem admits the following exact solution:
\begin{equation*}
    p(x,y,t) = q(x,y,t) = \frac{1}{4}\left(\cos(2 \pi x)\cos(2 \pi y)+2\right) (1-t).
\end{equation*}
Choosing the same expression for~$p$ and~$q$ is useful for assessing the impact of the two different changes of variables, which take values over different ranges.
\begin{figure}[t!]
    \begin{subfigure}[b]{0.5\textwidth}
          \resizebox{\textwidth}{!}{\definecolor{mycolor2}{rgb}{0.00000,1.00000,1.00000}%
\begin{tikzpicture}
\begin{axis}[%
width=3.875in,
height=2.50in,
at={(2.6in,1.099in)},
scale only axis,
xmode=log,
xmin=4.4200e-2,
xmax=3.5355e-1,
xminorticks=true,
xlabel = {$h$ [-]},
ylabel = {$\|p(\cdot,T)-u_p(w_{p,h}^{(N)})\|_{L^2(\Omega)}$},
xticklabel={\pgfmathparse{exp(\tick)}\pgfmathprintnumber{\pgfmathresult}},
x tick label style={
/pgf/number format/.cd, fixed, fixed zerofill,
precision=2},
ymode=log,
ymin=1e-10,
ymax=1e-1,
yminorticks=true,
axis background/.style={fill=white},
title style={font=\bfseries},
title={Errors $E_{p}$},
xmajorgrids,
xminorgrids,
ymajorgrids,
yminorgrids,
legend style={at={(0.96,0.40)},legend cell align=left, draw=white!15!black}
]
              
\addplot [color=red, line width=2.0pt, mark=*]
  table[row sep=crcr]{%
3.5355e-01  4.7805e-02\\
1.7678e-01  1.6187e-02\\
8.8388e-02  4.4109e-03\\
4.4200e-02  1.1000e-03\\
};
\addlegendentry{$\ell=1$}

\addplot [color=violet, line width=2.0pt, mark=*]
  table[row sep=crcr]{%
3.5355e-01  6.7666e-03\\
1.7678e-01  9.2947e-04\\
8.8388e-02  1.1861e-04\\
4.4200e-02  1.4932e-05\\
};
\addlegendentry{$\ell=2$}

\addplot [color=blue, line width=2.0pt, mark=*] 
  table[row sep=crcr]{%
3.5355e-01  1.3870e-03\\
1.7678e-01  8.2299e-05\\
8.8388e-02  5.1475e-06\\
4.4200e-02  3.1741e-07\\
};
\addlegendentry{$\ell=3$}

\addplot [color=teal, line width=2.0pt, mark=*]
  table[row sep=crcr]{%
3.5355e-01  1.2721e-04\\
1.7678e-01  8.8423e-06\\
8.8388e-02  3.1380e-07\\
4.4200e-02  1.0224e-08\\
};
\addlegendentry{$\ell=4$}

\addplot [color=green, line width=2.0pt, mark=*]
  table[row sep=crcr]{%
3.5355e-01  6.4379e-05\\
1.7678e-01  1.5087e-06\\
8.8388e-02  2.3978e-08\\
4.4200e-02  3.8407e-10\\
};
\addlegendentry{$\ell=5$}

\node[right, align=left, text=black, font=\footnotesize]
at (axis cs:0.1005,0.0015) {$2$};

\addplot [color=black, line width=1.5pt]
  table[row sep=crcr]{%
0.100   0.002\\
0.075   0.001125\\
0.100   0.001125\\
0.100   0.002\\
};

\node[right, align=left, text=black, font=\footnotesize]
at (axis cs:0.1005,7e-5) {$3$};

\addplot [color=black, line width=1.5pt]
  table[row sep=crcr]{%
0.100   1.0e-04\\
0.075   4.2e-05\\
0.100   4.2e-05\\
0.100   1.0e-04\\
};

\node[right, align=left, text=black, font=\footnotesize]
at (axis cs:0.1005,3.5e-6) {$4$};

\addplot [color=black, line width=1.5pt]
  table[row sep=crcr]{%
0.100   6e-06\\
0.075   1.9e-6\\
0.100   1.9e-6\\
0.100   6e-6\\
};

\node[right, align=left, text=black, font=\footnotesize]
at (axis cs:0.1005,1.5e-7) {$5$};

\addplot [color=black, line width=1.5pt]
  table[row sep=crcr]{%
0.100   3e-7\\
0.075   7.11e-8\\
0.100   7.11e-8\\
0.100   3e-7\\
};

\node[right, align=left, text=black, font=\footnotesize]
at (axis cs:0.1005,8e-9) {$6$};

\addplot [color=black, line width=1.5pt]
  table[row sep=crcr]{%
0.100   2e-8\\
0.075   3.56e-9\\
0.100   3.56e-9\\
0.100   2e-8\\
};

\end{axis}
\end{tikzpicture}
          \caption{Computed errors $E_{p}$ w.r.t.~the mesh size~$h$.}
        \label{fig:Tria_errors2D_h_L2_p}
    \end{subfigure}
    \begin{subfigure}[b]{0.5\textwidth}
        \resizebox{\textwidth}{!}{\definecolor{mycolor2}{rgb}{0.00000,1.00000,1.00000}%
\begin{tikzpicture}
\begin{axis}[%
width=3.875in,
height=2.50in,
at={(2.6in,1.099in)},
scale only axis,
xmode=log,
xmin=4.420e-2,
xmax=3.5355e-1,
xminorticks=true,
xlabel = {$h$ [-]},
ylabel = {$\|q(\cdot,T)-u_q(w_{q,h}^{(N)})\|_{L^2(\Omega)}$},
xticklabel={\pgfmathparse{exp(\tick)}\pgfmathprintnumber{\pgfmathresult}},
x tick label style={
/pgf/number format/.cd, fixed, fixed zerofill,
precision=2},
ymode=log,
ymin=1e-10,
ymax=1e-1,
yminorticks=true,
axis background/.style={fill=white},
title style={font=\bfseries},
title={Errors $E_{q}$},
xmajorgrids,
xminorgrids,
ymajorgrids,
yminorgrids,
legend style={at={(0.96,0.40)},legend cell align=left, draw=white!15!black}
]
              
\addplot [color=red, line width=2.0pt, mark=*]
  table[row sep=crcr]{%
3.5355e-01  4.3822e-02\\
1.7678e-01  1.5089e-02\\
8.8388e-02  4.1710e-03\\
4.4200e-02  1.1000e-03\\
};
\addlegendentry{$\ell=1$}

\addplot [color=violet, line width=2.0pt, mark=*]
  table[row sep=crcr]{%
3.5355e-01  6.7513e-03\\
1.7678e-01  9.2854e-04\\
8.8388e-02  1.1861e-04\\
4.4200e-02  1.4548e-05\\
};
\addlegendentry{$\ell=2$}

\addplot [color=blue, line width=2.0pt, mark=*] 
  table[row sep=crcr]{%
3.5355e-01  1.6608e-03\\
1.7678e-01  9.9865e-05\\
8.8388e-02  6.1861e-06\\
4.4200e-02  3.7942e-07\\
};
\addlegendentry{$\ell=3$}

\addplot [color=teal, line width=2.0pt, mark=*]
  table[row sep=crcr]{%
3.5355e-01  1.7306e-04\\
1.7678e-01  1.1905e-05\\
8.8388e-02  4.1350e-07\\
4.4200e-02  1.3362e-08\\
};
\addlegendentry{$\ell=4$}

\addplot [color=green, line width=2.0pt, mark=*]
  table[row sep=crcr]{%
3.5355e-01  9.6947e-05\\
1.7678e-01  1.9052e-06\\
8.8388e-02  2.9901e-08\\
4.4200e-02  4.7653e-10\\
};
\addlegendentry{$\ell=5$}

\node[right, align=left, text=black, font=\footnotesize]
at (axis cs:0.1005,0.0015) {$2$};

\addplot [color=black, line width=1.5pt]
  table[row sep=crcr]{%
0.100   0.002\\
0.075   0.001125\\
0.100   0.001125\\
0.100   0.002\\
};

\node[right, align=left, text=black, font=\footnotesize]
at (axis cs:0.1005,5e-5) {$3$};

\addplot [color=black, line width=1.5pt]
  table[row sep=crcr]{%
0.100   8e-05\\
0.075   3.36e-05\\
0.100   3.36e-05\\
0.100   8e-05\\
};

\node[right, align=left, text=black, font=\footnotesize]
at (axis cs:0.1005,3.5e-6) {$4$};

\addplot [color=black, line width=1.5pt]
  table[row sep=crcr]{%
0.100   6e-06\\
0.075   1.9e-6\\
0.100   1.9e-6\\
0.100   6e-6\\
};

\node[right, align=left, text=black, font=\footnotesize]
at (axis cs:0.1005,2e-7) {$5$};

\addplot [color=black, line width=1.5pt]
  table[row sep=crcr]{%
0.100   4e-7\\
0.075   9.48e-8\\
0.100   9.48e-8\\
0.100   4e-7\\
};

\node[right, align=left, text=black, font=\footnotesize]
at (axis cs:0.1005,8e-9) {$6$};

\addplot [color=black, line width=1.5pt]
  table[row sep=crcr]{%
0.100   2e-8\\
0.075   3.56e-9\\
0.100   3.56e-9\\
0.100   2e-8\\
};

\end{axis}
\end{tikzpicture}
          \caption{Computed errors $E_{q}$
          w.r.t.~the mesh size $h$.}
        \label{fig:Tria_errors2D_h_L2_q}
    \end{subfigure}
    \\[0.5mm]
    \begin{subfigure}[b]{0.5\textwidth}
          \resizebox{\textwidth}{!}{\definecolor{mycolor2}{rgb}{0.00000,1.00000,1.00000}%
\begin{tikzpicture}

\begin{axis}[%
width=3.875in,
height=2.50in,
at={(2.6in,1.099in)},
scale only axis,
xmode=log,
xmin=4.4200e-2,
xmax=3.5355e-1,
xminorticks=true,
xlabel = {$h$ [-]},
ylabel = {$\|\nabla p(\cdot,T) + \sigma_{p,h}^{(N)}\|_{L^2(\Omega)^d}$},
xticklabel={\pgfmathparse{exp(\tick)}\pgfmathprintnumber{\pgfmathresult}},
x tick label style={
/pgf/number format/.cd, fixed, fixed zerofill,
precision=2},
ymode=log,
ymin=1e-7,
ymax=1,
yminorticks=true,
axis background/.style={fill=white},
title style={font=\bfseries},
title={Errors $E_{\boldsymbol{\sigma}_p}$},
xmajorgrids,
xminorgrids,
ymajorgrids,
yminorgrids,
legend style={at={(0.96,0.40)},legend cell align=left, draw=white!15!black}
]
              
\addplot [color=red, line width=2.0pt, mark=square*, dotted, mark options=solid]
  table[row sep=crcr]{%
3.5355e-01  6.3152e-01\\
1.7678e-01  3.6123e-01\\
8.8388e-02  1.8708e-01\\
4.4200e-02  9.4400e-02\\
};
\addlegendentry{$\ell=1$}

\addplot [color=violet, line width=2.0pt, mark=square*, dotted, mark options=solid]
  table[row sep=crcr]{
3.5355e-01  1.9427e-01\\
1.7678e-01  5.7444e-02\\
8.8388e-02  1.5031e-02\\
4.4200e-02  3.8000e-03\\
};
\addlegendentry{$\ell=2$}

\addplot [color=blue, line width=2.0pt, mark=square*, dotted, mark options=solid]
  table[row sep=crcr]{
3.5355e-01  5.2491e-02\\
1.7678e-01  6.5969e-03\\
8.8388e-02  8.5243e-04\\
4.4200e-02  1.0736e-04\\
};
\addlegendentry{$\ell=3$}

\addplot [color=teal, line width=2.0pt, mark=square*, dotted, mark options=solid]
  table[row sep=crcr]{
3.5355e-01  6.6443e-03\\
1.7678e-01  8.4984e-04\\
8.8388e-02  6.0060e-05\\
4.4200e-02  3.8670e-06\\
};
\addlegendentry{$\ell=4$}

\addplot [color=green, line width=2.0pt, mark=square*, dotted, mark options=solid]
  table[row sep=crcr]{%
3.5355e-01  3.6144e-03\\
1.7678e-01  1.7091e-04\\
8.8388e-02  5.3878e-06\\
4.4200e-02  1.7268e-07\\
};
\addlegendentry{$\ell=5$}

\node[right, align=left, text=black, font=\footnotesize]
at (axis cs:0.1005,0.072) {$1$};

\addplot [color=black, line width=1.5pt]
  table[row sep=crcr]{%
0.100   0.08\\
0.075   0.06\\
0.100   0.06\\
0.100   0.08\\
};

\node[right, align=left, text=black, font=\footnotesize]
at (axis cs:0.1005,0.006) {$2$};

\addplot [color=black, line width=1.5pt]
  table[row sep=crcr]{%
0.100   0.008\\
0.075   0.0045\\
0.100   0.0045\\
0.100   0.008\\
};

\node[right, align=left, text=black, font=\footnotesize]
at (axis cs:0.1005,5.5e-4) {$3$};

\addplot [color=black, line width=1.5pt]
  table[row sep=crcr]{%
0.100   8e-04\\
0.075   3.36e-04\\
0.100   3.36e-04\\
0.100   8e-04\\
};

\node[right, align=left, text=black, font=\footnotesize]
at (axis cs:0.1005,3.5e-5) {$4$};

\addplot [color=black, line width=1.5pt]
  table[row sep=crcr]{%
0.100   6.00e-05\\
0.075   1.89e-05\\
0.100   1.89e-05\\
0.100   6.00e-05\\
};

\node[right, align=left, text=black, font=\footnotesize]
at (axis cs:0.1005,3e-6) {$5$};

\addplot [color=black, line width=1.5pt]
  table[row sep=crcr]{%
0.100   6.00e-6\\
0.075   1.42e-6\\
0.100   1.42e-6\\
0.100   6.00e-6\\
};

\end{axis}
\end{tikzpicture}
          \caption{Computed errors $E_{\boldsymbol{\sigma}_p}$ w.r.t.~the mesh size~$h$.}
        \label{fig:Tria_errors2D_h_DG_p}
    \end{subfigure}
    \begin{subfigure}[b]{0.5\textwidth}
        \resizebox{\textwidth}{!}{\definecolor{mycolor2}{rgb}{0.00000,1.00000,1.00000}%
\begin{tikzpicture}

\begin{axis}[%
width=3.875in,
height=2.50in,
at={(2.6in,1.099in)},
scale only axis,
xmode=log,
xmin=4.4200e-2,
xmax=3.5355e-1,
xminorticks=true,
xlabel = {$h$ [-]},
ylabel = {$\|\nabla q(\cdot,T) + \sigma_{q,h}^{(N)}\|_{L^2(\Omega)^d}$},
xticklabel={\pgfmathparse{exp(\tick)}\pgfmathprintnumber{\pgfmathresult}},
x tick label style={
/pgf/number format/.cd, fixed, fixed zerofill,
precision=2},
ymode=log,
ymin=1e-7,
ymax=1,
yminorticks=true,
axis background/.style={fill=white},
title style={font=\bfseries},
title={Errors $E_{\boldsymbol{\sigma}_q}$},
xmajorgrids,
xminorgrids,
ymajorgrids,
yminorgrids,
legend style={at={(0.96,0.40)},legend cell align=left, draw=white!15!black}
]
              
\addplot [color=red, line width=2.0pt, mark=square*, dotted, mark options=solid]
  table[row sep=crcr]{%
3.5355e-01  6.1621e-01\\
1.7678e-01  3.5339e-01\\
8.8388e-02  1.8389e-01\\
4.4200e-02  9.3000e-02\\
};
\addlegendentry{$\ell=1$}

\addplot [color=violet, line width=2.0pt, mark=square*, dotted, mark options=solid]
  table[row sep=crcr]{
3.5355e-01  1.9863e-01\\
1.7678e-01  5.8437e-02\\
8.8388e-02  1.5241e-02\\
4.4200e-02  3.9000e-03\\
};
\addlegendentry{$\ell=2$}

\addplot [color=blue, line width=2.0pt, mark=square*, dotted, mark options=solid]
  table[row sep=crcr]{
3.5355e-01  6.2625e-02\\
1.7678e-01  7.9476e-03\\
8.8388e-02  1.0262e-03\\
4.4200e-02  1.2916e-04\\
};
\addlegendentry{$\ell=3$}

\addplot [color=teal, line width=2.0pt, mark=square*, dotted, mark options=solid]
  table[row sep=crcr]{
3.5355e-01  9.2894e-03\\
1.7678e-01  1.1568e-03\\
8.8388e-02  7.9291e-05\\
4.4200e-02  5.0659e-06\\
};
\addlegendentry{$\ell=4$}

\addplot [color=green, line width=2.0pt, mark=square*, dotted, mark options=solid]
  table[row sep=crcr]{%
3.5355e-01  5.6031e-03\\
1.7678e-01  2.1761e-04\\
8.8388e-02  6.7555e-06\\
4.4200e-02  2.1507e-07\\
};
\addlegendentry{$\ell=5$}

\node[right, align=left, text=black, font=\footnotesize]
at (axis cs:0.1005,0.072) {$1$};

\addplot [color=black, line width=1.5pt]
  table[row sep=crcr]{%
0.100   0.08\\
0.075   0.06\\
0.100   0.06\\
0.100   0.08\\
};

\node[right, align=left, text=black, font=\footnotesize]
at (axis cs:0.1005,0.006) {$2$};

\addplot [color=black, line width=1.5pt]
  table[row sep=crcr]{%
0.100   0.008\\
0.075   0.0045\\
0.100   0.0045\\
0.100   0.008\\
};

\node[right, align=left, text=black, font=\footnotesize]
at (axis cs:0.1005,5.5e-4) {$3$};

\addplot [color=black, line width=1.5pt]
  table[row sep=crcr]{%
0.100   8e-04\\
0.075   3.36e-04\\
0.100   3.36e-04\\
0.100   8e-04\\
};

\node[right, align=left, text=black, font=\footnotesize]
at (axis cs:0.1005,3.5e-5) {$4$};

\addplot [color=black, line width=1.5pt]
  table[row sep=crcr]{%
0.100   6.00e-05\\
0.075   1.89e-05\\
0.100   1.89e-05\\
0.100   6.00e-05\\
};

\node[right, align=left, text=black, font=\footnotesize]
at (axis cs:0.1005,3e-6) {$5$};

\addplot [color=black, line width=1.5pt]
  table[row sep=crcr]{%
0.100   6.00e-6\\
0.075   1.42e-6\\
0.100   1.42e-6\\
0.100   6.00e-6\\
};

\end{axis}
\end{tikzpicture}
          \caption{Computed errors $E_{\boldsymbol{\sigma}_q}$
          w.r.t.~the mesh size $h$.}
        \label{fig:Tria_errors2D_h_DG_q}
    \end{subfigure}
    \caption{Test case 1: Computed errors and convergence rates w.r.t.~the mesh size~$h$.}
    \label{fig:Tria_errors2D}
\end{figure}
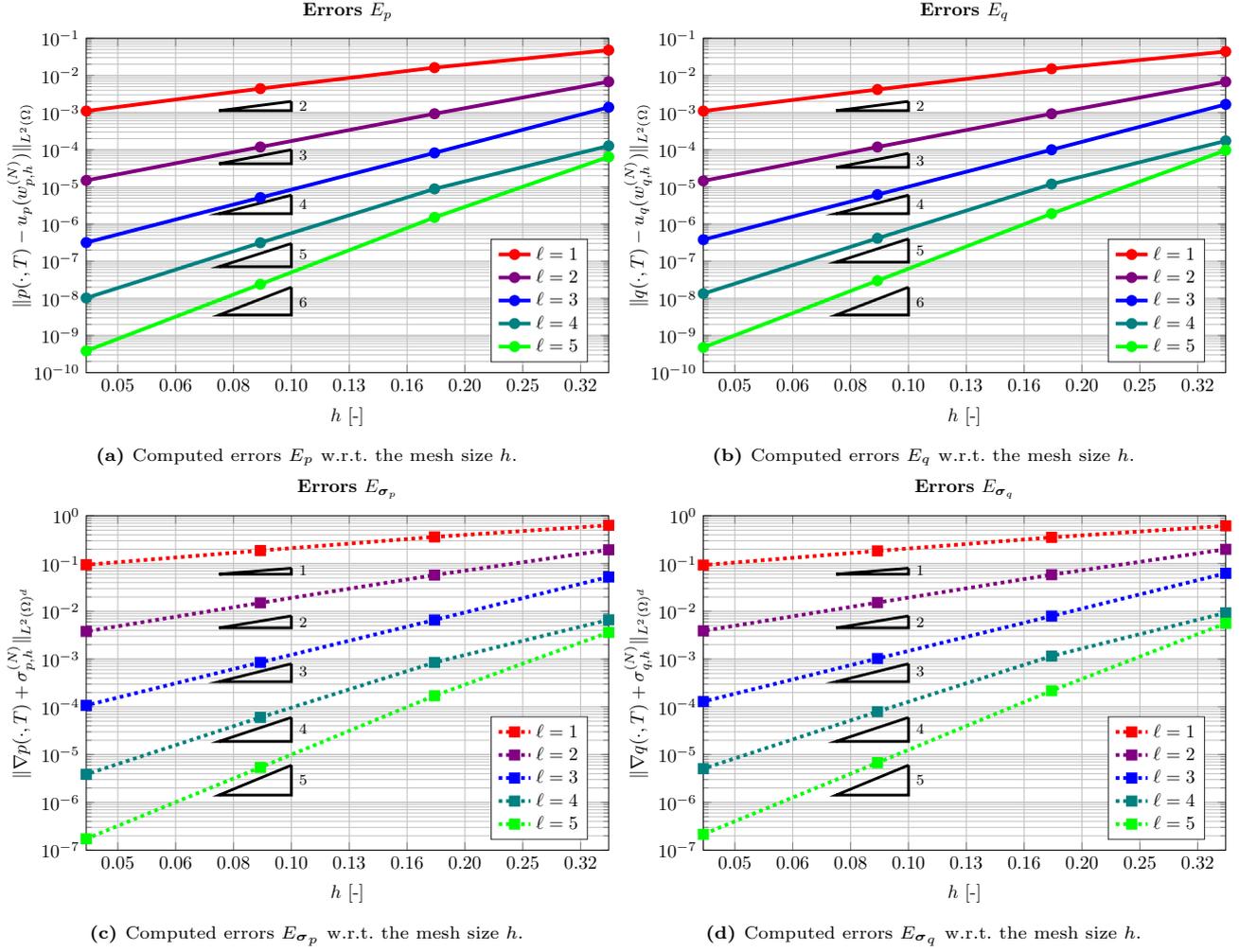
\subsubsection*{Convergence with respect to the mesh size}
We perform a convergence test keeping fixed the polynomial degree of the space approximation $\ell=1,2,3,4,5$ and using, for each degree, different mesh refinements with number of elements $N_\mathrm{el}= 32,\, 128,\, 512,\, 2048$. 
Concerning the time discretization, we take~$\tau= 10^{-3}$ and a final time $T=5\times10^{-2}$. In Figures \ref{fig:Tria_errors2D_h_L2_p} and \ref{fig:Tria_errors2D_h_L2_q}, we report the computed errors $E_{p}$ and~$E_{q}$ %
for the primal variables~$p$ and~$q$, respectively. Moreover, in Figures \ref{fig:Tria_errors2D_h_DG_p} and \ref{fig:Tria_errors2D_h_DG_q}, we report the computed errors $E_{\boldsymbol{\sigma}_p}$ and~$E_{\boldsymbol{\sigma}_q}$ for the approximations of~$\nabla p$ and~$\nabla q$, respectively. In all cases, the errors decrease with optimal convergence rates, namely, of order~$\mathcal{O}(h^{\ell+1})$ for~$E_{\star}$, and of order~$\mathcal{O}(h^\ell)$ for~$E_{\boldsymbol{\sigma}_\star}$.
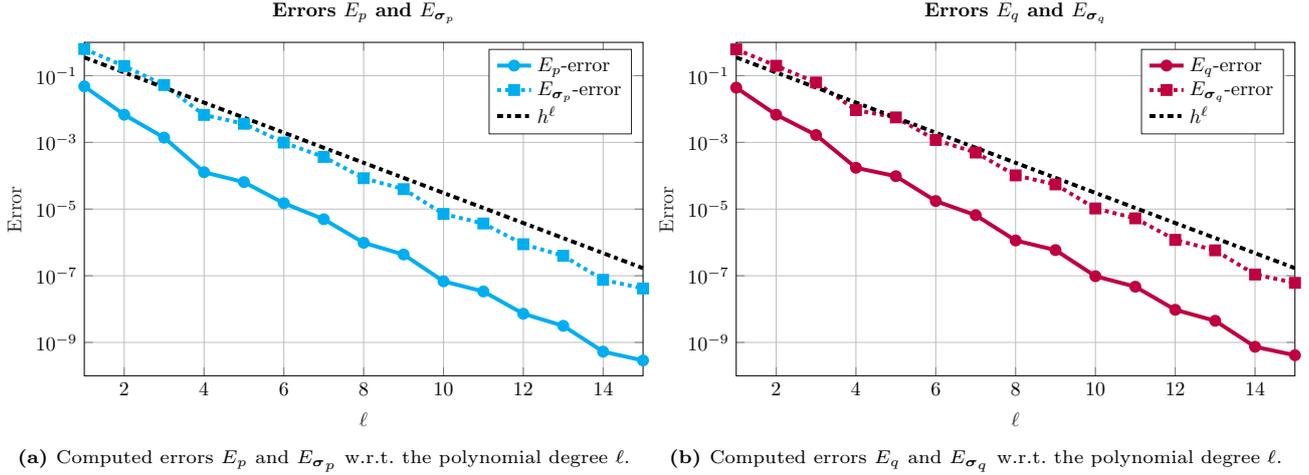
\begin{figure}[t!]
    \begin{subfigure}[b]{0.5\textwidth}
          \resizebox{\textwidth}{!}{\begin{tikzpicture}
\begin{axis}[%
width=3.875in,
height=2.33in,
at={(2.6in,1.099in)},
scale only axis,
xmin=1,
xmax=15,
xlabel style={font=\color{white!15!black}},
xlabel={$\ell$},
ymode=log,
ymin=1e-10,
ymax=1,
yminorticks=true,
ylabel style={font=\color{white!15!black}},
ylabel={Error},
axis background/.style={fill=white},
title style={font=\bfseries},
title={Errors $E_{p}$ and $E_{\boldsymbol{\sigma}_p}$},
xmajorgrids,
xminorgrids,
ymajorgrids,
yminorgrids,
legend style={legend cell align=left, align=left, draw=white!15!black}
]

\addplot [color=cyan, line width=2.0pt, mark=*]
  table[row sep=crcr]{%
    1   4.7805e-02 \\
    2   6.7666e-03 \\
    3   1.3870e-03 \\
    4   1.2721e-04 \\
    5   6.4379e-05 \\
    6   1.4988e-05 \\
    7   4.9600e-06 \\
    8   9.7322e-07 \\
    9   4.3360e-07 \\
    10  6.8096e-08 \\
    11  3.3774e-08 \\
    12  7.2471e-09 \\
    13  3.1455e-09 \\
    14  5.2881e-10 \\
    15  2.8763e-10 \\
};
\addlegendentry{$E_{p}$-error}

\addplot [color=cyan, line width=2.0pt, mark=square*, dotted, mark options=solid]
  table[row sep=crcr]{%
    1   6.3152e-01 \\
    2   1.9427e-01 \\
    3   5.2491e-02 \\
    4   6.6443e-03 \\
    5   3.6144e-03 \\
    6   9.8715e-04 \\
    7   3.6589e-04 \\
    8   8.4302e-05 \\
    9   3.9593e-05 \\
    10  7.1411e-06 \\
    11  3.6960e-06 \\
    12  8.8172e-07 \\
    13  3.9768e-07 \\
    14  7.5502e-08 \\
    15  4.1593e-08 \\
};
\addlegendentry{$E_{\boldsymbol{\sigma}_p}$-error}

\addplot [color=black, line width=2.0pt, dashdotted, mark options=solid]
  table[row sep=crcr]{%
    1   3.5355e-01 \\
    2   1.2500e-01 \\
    3   4.4194e-02 \\
    4   1.5625e-02 \\
    5  	5.5243e-03 \\
    6  	1.9531e-03 \\
    7  	6.9053e-04 \\
    8  	2.4414e-04 \\
    9   8.6317e-05 \\
    10  3.0518e-05 \\
    11  1.0790e-05 \\
    12  3.8147e-06 \\
    13  1.3487e-06 \\
    14  4.7684e-07 \\
    15  1.6859e-07 \\
};
\addlegendentry{$h^\ell$}

\end{axis}
\end{tikzpicture}
          \caption{Computed errors $E_{p}$ and $E_{\boldsymbol{\sigma}_p}$ w.r.t.~the polynomial degree~$\ell$.}
        \label{fig:Tria_errors2D_l_p}
    \end{subfigure}
    \begin{subfigure}[b]{0.5\textwidth}
        \resizebox{\textwidth}{!}{\definecolor{mylime}{rgb}{0.70000,0.90000,0.00000}%

\begin{tikzpicture}
\begin{axis}[%
width=3.875in,
height=2.33in,
at={(2.6in,1.099in)},
scale only axis,
xmin=1,
xmax=15,
xlabel style={font=\color{white!15!black}},
xlabel={$\ell$},
ymode=log,
ymin=1e-10,
ymax=1,
yminorticks=true,
ylabel style={font=\color{white!15!black}},
ylabel={Error},
axis background/.style={fill=white},
title style={font=\bfseries},
title={Errors $E_{q}$ and $E_{\boldsymbol{\sigma}_q}$},
xmajorgrids,
xminorgrids,
ymajorgrids,
yminorgrids,
legend style={legend cell align=left, align=left, draw=white!15!black}
]

\addplot [color=purple, line width=2.0pt, mark=*]
  table[row sep=crcr]{%
    1   4.3822e-02 \\
    2   6.7513e-03 \\
    3   1.6608e-03 \\
    4   1.7306e-04 \\
    5   9.6947e-05 \\
    6   1.7420e-05 \\
    7   6.5459e-06 \\
    8   1.1427e-06 \\
    9   5.8849e-07 \\
    10  9.6959e-08 \\
    11  4.6954e-08 \\
    12  9.5249e-09 \\
    13  4.4635e-09 \\
    14  7.4017e-10 \\
    15  4.1333e-10 \\
};
\addlegendentry{$E_{q}$-error}

\addplot [color=purple, line width=2.0pt, mark=square*, dotted, mark options=solid]
  table[row sep=crcr]{%
    1   6.1621e-01 \\
    2   1.9863e-01 \\
    3   6.2625e-02 \\
    4   9.2894e-03 \\
    5   5.6031e-03 \\
    6   1.1805e-03 \\
    7   4.9611e-04 \\
    8   1.0245e-04 \\
    9   5.4891e-05 \\
    10  1.0431e-05 \\
    11  5.2716e-06 \\
    12  1.1926e-06 \\
    13  5.7868e-07 \\
    14  1.0894e-07 \\
    15  6.1242e-08 \\
};
\addlegendentry{$E_{\boldsymbol{\sigma}_q}$-error}

\addplot [color=black, line width=2.0pt, dashdotted, mark options=solid]
  table[row sep=crcr]{%
    1   3.5355e-01 \\
    2   1.2500e-01 \\
    3   4.4194e-02 \\
    4   1.5625e-02 \\
    5  	5.5243e-03 \\
    6  	1.9531e-03 \\
    7  	6.9053e-04 \\
    8  	2.4414e-04 \\
    9   8.6317e-05 \\
    10  3.0518e-05 \\
    11  1.0790e-05 \\
    12  3.8147e-06 \\
    13  1.3487e-06 \\
    14  4.7684e-07 \\
    15  1.6859e-07 \\
};
\addlegendentry{$h^\ell$}

\end{axis}
\end{tikzpicture}
          \caption{Computed errors $E_{q}$ and $E_{\boldsymbol{\sigma}_q}$ w.r.t.~the polynomial degree~$\ell$.}
        \label{fig:Tria_errors2D_l_q}
    \end{subfigure}
    \caption{Test case 1: Computed errors and convergence rates w.r.t.~the polynomial degree~$\ell$. }
    \label{fig:Tria_errors2D_l}
\end{figure}
\par
\subsubsection*{Convergence with respect to the polynomial degree}
Then, we develop a convergence analysis with respect to the polynomial degree $\ell$. 
To do so, we consider the coarsest triangular mesh in space with~$32$ elements. 
The errors~$E_{p}$ and~$E_{\boldsymbol{\sigma}_p}$ are reported in Figure~\ref{fig:Tria_errors2D_l_p}, and the errors~$E_{q}$ and~$E_{\boldsymbol{\sigma}_q}$ are reported in Figure~\ref{fig:Tria_errors2D_l_q}.
In all plots, we observe spectral convergence with respect to the polynomial degree $\ell$, as expected since the solution is analytic.
\subsection{Test case 2: Traveling-wave solution}
\label{sec:test_case_2}
In this section, we analyze the capabilities of our method for accurately simulating
a traveling-wave solution, while respecting the physical bounds pointwise. For a positive constant~${\mathrm{d}}$, we fix a constant, isotropic diffusion tensor~$\D={\mathrm{d}}\,\mathbb{I}_2$, as well as constant coefficients~$\lambda_p,\, \lambda_q,\, \mu_{pq},\, \kappa_{p}$ in the reaction terms. Then, we consider a solution to the equations of system~\eqref{EQN::HETERODIMER} of the form
\begin{equation*}
\begin{aligned}
    p(x,y,t) = & \psi_p(x-vt) = \psi_p(\xi),\\
    q(x,y,t) = & \psi_q(x-vt) = \psi_q(\xi),
\end{aligned}
\end{equation*}
where~$v$ is a wave speed depending on the physical parameters and defined by~$v \coloneqq  10\,{\mathrm{d}}$. If~$x\in \R$ or~$t\in\R$, then~$\xi\in\R$.
Substituting~$p$ and~$q$ in the two equations of~\eqref{EQN::HETERODIMER}, we obtain the following equivalent system of ordinary differential equations (ODEs):
\begin{equation}
\label{eq:wave_eq}
    \begin{dcases}
        {\mathrm{d}} \psi_p''(\xi) + v\psi_p'(\xi) - \lambda_p \psi_p(\xi) - \mu_{pq}\psi_p(\xi)\psi_q(\xi) + \kappa_p = 0, & \xi\in \R, 
        \\
        {\mathrm{d}} \psi_q''(\xi) + v \psi_q'(\xi) - \lambda_q \psi_q(\xi) + \mu_{pq}\psi_p(\xi)\psi_q(\xi) = 0, & 
        \xi\in \R.
        \\
    \end{dcases}
\end{equation}
Under the additional assumptions~$\lambda_q=\lambda_p$ and ${\mathrm{d}} = {(\kappa_p\,\mu_{pq} - \lambda_p^2)}/{(24 \lambda_p)}$, it can be verified that system~\eqref{eq:wave_eq} admits the following solution: 
\begin{equation*}
\begin{aligned}
    \psi_p(\xi) = & \dfrac{\lambda_p^2 + 3 \kappa_p \mu_{pq} + (\lambda_p^2 - \kappa_p \mu_{pq})(\tanh(\xi)^2 - 2 \tanh(\xi))}{4 \lambda_p \mu_{pq}}, \\
    \psi_q(\xi) = & -\dfrac{(\lambda_p^2 - \kappa_p \mu_{pq})}{4 \lambda_p \mu_{pq}}  \left(1 - \tanh(\xi)\right)^2.
\end{aligned}
\end{equation*}
The functions~$\psi_p$ and~$\psi_q$ are positive for all~$\xi\in\mathbb{R}$, under the assumption~$\Upsilon_{pq}>0$ already discussed in Section~\ref{SEC::MODEL}.
They satisfy a homogeneous Neumann boundary condition at the limits~$\xi\rightarrow\pm\infty$, which is equivalent to~$x\rightarrow\pm\infty$ for each fixed value of~$t\in(0,T)$. 
The homogeneous Neumann boundary condition is also satisfied in the $y$-direction, as %
$p$ and~$q$ are both independent of~$y$.
\par
This solution respects the equilibria of system~\eqref{EQN::HETERODIMER}.\footnote{\label{footnote:equilibria}
For~$\Upsilon_{pq}>0$, $(\equilp,0)$ and~$(\lambda_q/\mu_{pq},\equilq)$ are admissible equilibria, the former unstable and the latter stable; see~\cite[\S2.1]{Mattia}.} Indeed, considering  the limit $t\rightarrow -\infty$ (or, equivalently, $\xi\rightarrow +\infty$) we find the \emph{unstable} equilibrium solutions:
\begin{equation*}
    \lim_{\xi\rightarrow+\infty} \psi_p(\xi) = \dfrac{\kappa_p}{\lambda_p} = \equilp \qquad \mathrm{and} \qquad \lim_{\xi\rightarrow+\infty} \psi_q(\xi) = 0.
\end{equation*}
Moreover, taking the limit $t\rightarrow+\infty$ (or, equivalently, $\xi\rightarrow-\infty$), we recover the \emph{stable} equilibrium solutions: 
\begin{equation*}
    \lim_{\xi\rightarrow-\infty} \psi_p(\xi) = \dfrac{\lambda_p}{\mu_{pq}} = \dfrac{\lambda_q}{\mu_{pq}} \qquad \mathrm{and} \qquad \lim_{\xi\rightarrow-\infty} \psi_q(\xi) =\dfrac{\kappa_p \mu_{pq}-\lambda_p^2}{\lambda_p \mu_{pq}} = \dfrac{\kappa_p \mu_{pq} - \lambda_p\lambda_q}{ \lambda_q \mu_{pq}} = \equilq.
\end{equation*}
\par
For this test case, we consider a rectangular space domain~$\Omega = (-10,10)\times(0,5)$, and the final time~$T=1$. We impose homogeneous Neumann boundary conditions not only at~$y=0$ and~$y=5$, but also at~$x=-10$ and~$x=10$. Concerning the problem coefficients, we fix $\kappa_p=0.1$, $\lambda_p=\lambda_q=0.1$, and $\mu_{pq}=1$, with corresponding diffusion coefficient ${\mathrm{d}} = 3.75 \times 10^{-2}$, velocity $v = 3.75 \times 10^{-1}$, and constant~$C_f \approx 5.15$ for the bound in Proposition~\ref{PROP::HETER_FBOUND} on the reaction term.
\begin{figure}[t!]
\begin{subfigure}[b]{\textwidth}
	\centering
	{\includegraphics[width=\textwidth]{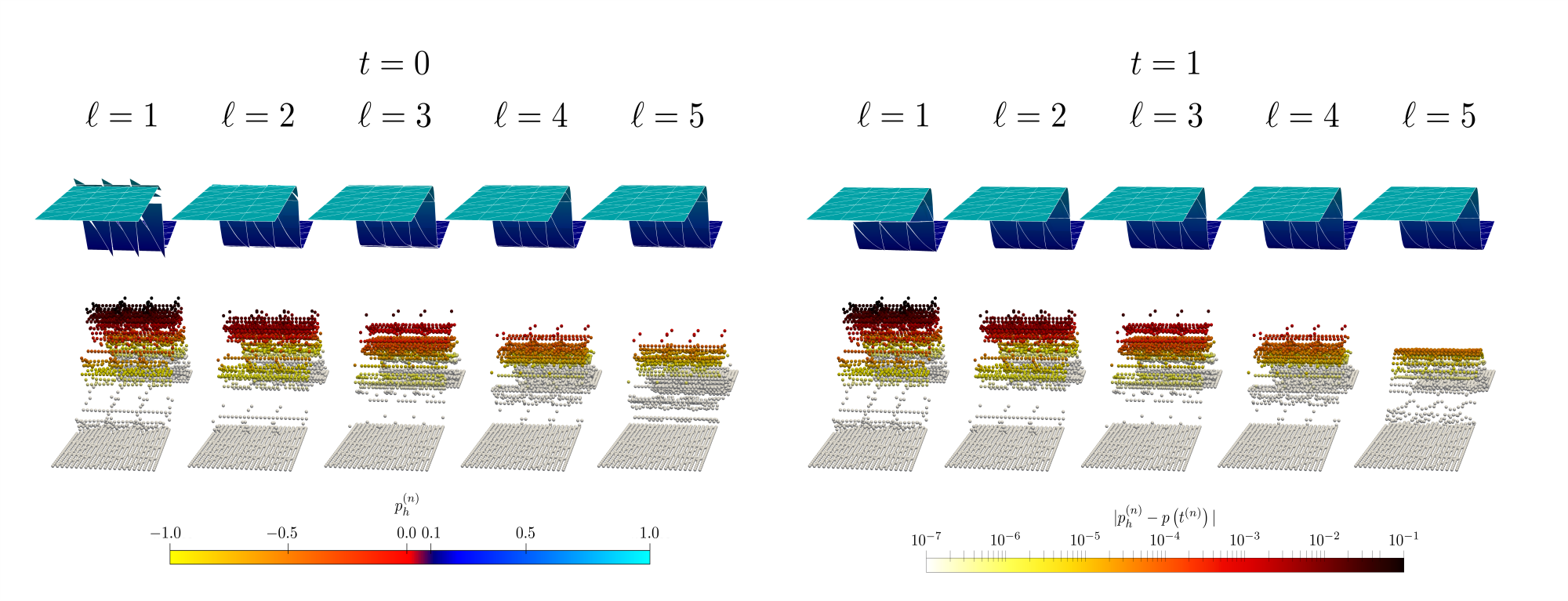}}
    \caption{Solutions (first row) with associated approximation errors (second row) for the variable $p$.}
    \label{fig:waves2Dp}
\end{subfigure}
\begin{subfigure}[b]{\textwidth}
	\centering
    {\includegraphics[width=\textwidth]{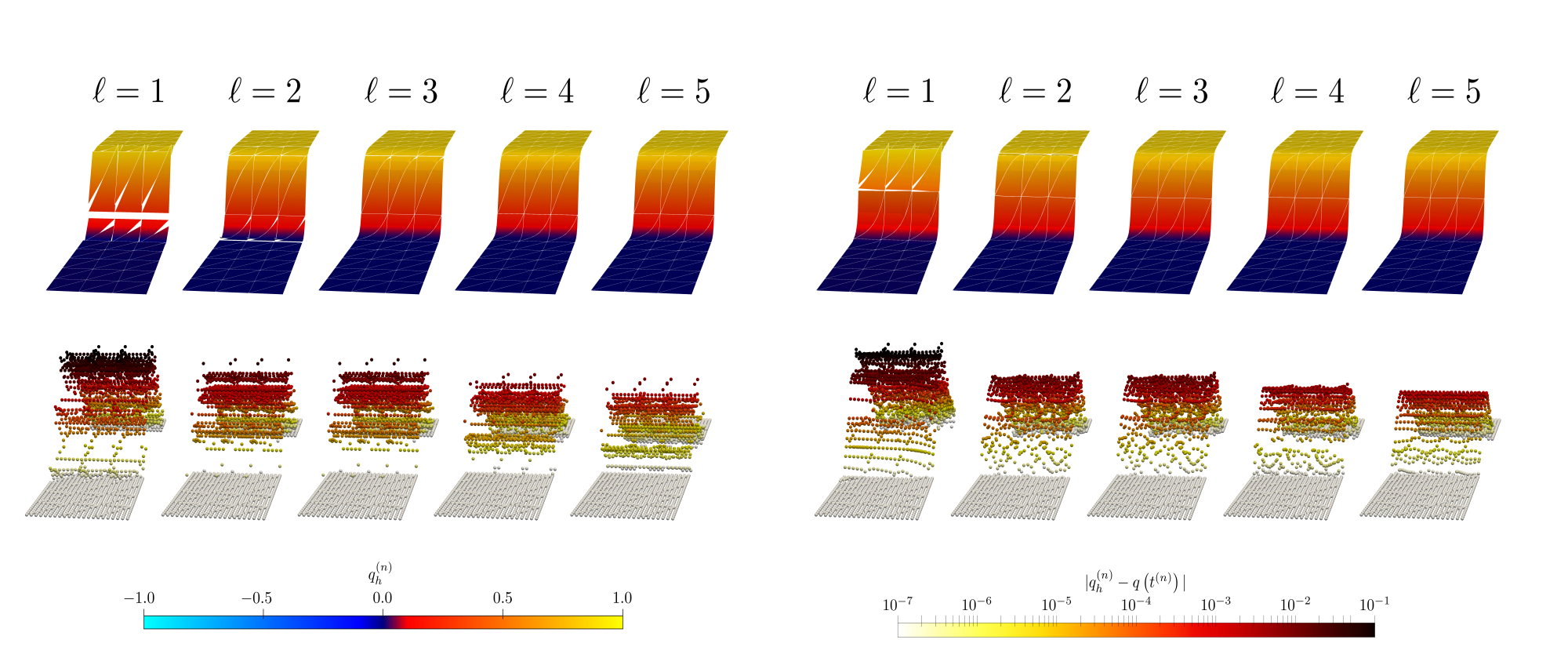}}	\caption{Solutions (first row) with associated approximation errors (second row) for the variable $q$.}
    \label{fig:waves2Dq}
\end{subfigure}
\caption{Test case 2: Initial conditions ($t=0$) and solutions at $t=1$ (first row) for different polynomial degrees $\ell=1,...,5$ with associated approximation errors (second row) for the variables $p$ (a) and $q$ (b).}
\label{fig:waves2D}
\end{figure}

\begin{figure}[t!]
    \begin{subfigure}[b]{0.5\textwidth}
          \resizebox{\textwidth}{!}{\definecolor{mycolor2}{rgb}{0.00000,1.00000,1.00000}%
\begin{tikzpicture}
\begin{axis}[%
width=3.875in,
height=2.50in,
at={(2.6in,1.099in)},
scale only axis,
xmode=log,
xmin=5.8926e-01,
xmax=2.3570e+00,
xminorticks=true,
xlabel = {$h$ [-]},
ylabel = {$\|p(\cdot,T)-u_p(w_{p,h}^{(N)})\|_{L^2(\Omega)}$},
xticklabel={\pgfmathparse{exp(\tick)}\pgfmathprintnumber{\pgfmathresult}},
x tick label style={
/pgf/number format/.cd, fixed, fixed zerofill,
precision=2},
ymode=log,
ymin=1e-6,
ymax=2e-1,
yminorticks=true,
axis background/.style={fill=white},
title style={font=\bfseries},
title={Errors $E_{p}$},
xmajorgrids,
xminorgrids,
ymajorgrids,
yminorgrids,
legend style={at={(0.96,0.40)},legend cell align=left, draw=white!15!black}
]
              
\addplot [color=red, line width=2.0pt, mark=*]
  table[row sep=crcr]{%
2.3570e+00  9.0032e-02\\
1.1785e+00  1.8738e-02\\
7.8568e-01  8.3132e-03\\
5.8926e-01  4.7536e-03\\
};
\addlegendentry{$\ell=1$}

\addplot [color=violet, line width=2.0pt, mark=*]
  table[row sep=crcr]{%
2.3570e+00  1.4859e-02\\
1.1785e+00  1.8099e-03\\
7.8568e-01  5.4908e-04\\
5.8926e-01  2.1704e-04\\
};
\addlegendentry{$\ell=2$}

\addplot [color=blue, line width=2.0pt, mark=*] 
  table[row sep=crcr]{
2.3570e+00  6.7302e-03\\
1.1785e+00  3.9786e-04\\
7.8568e-01  8.2138e-05\\
5.8926e-01  2.5720e-05\\
};
\addlegendentry{$\ell=3$}

\addplot [color=teal, line width=2.0pt, mark=*]
  table[row sep=crcr]{%
2.3570e+00  3.1437e-03\\
1.1785e+00  9.8729e-05\\
7.8568e-01  1.0169e-05\\
5.8926e-01  2.4126e-06\\
};
\addlegendentry{$\ell=4$}

\node[right, align=left, text=black, font=\footnotesize]
at (axis cs:1.005,0.0075) {$2$};

\addplot [color=black, line width=1.5pt]
  table[row sep=crcr]{%
1.00   1e-2\\
0.75   5.625e-3\\
1.00   5.625e-3\\
1.00   1e-2\\
};

\node[right, align=left, text=black, font=\footnotesize]
at (axis cs:1.005,5e-4) {$3$};

\addplot [color=black, line width=1.5pt]
  table[row sep=crcr]{%
1.00   8.0e-04\\
0.75   3.36e-04\\
1.00   3.36e-04\\
1.00   8.0e-04\\
};

\node[right, align=left, text=black, font=\footnotesize]
at (axis cs:1.005,9e-5) {$4$};

\addplot [color=black, line width=1.5pt]
  table[row sep=crcr]{%
1.00   1.50e-4\\
0.75   4.74e-5\\
1.00   4.74e-5\\
1.00   1.50e-4\\
};

\node[right, align=left, text=black, font=\footnotesize]
at (axis cs:1.005,1e-5) {$5$};

\addplot [color=black, line width=1.5pt]
  table[row sep=crcr]{%
1.00   2.00e-5\\
0.75   4.72e-6\\
1.00   4.72e-6\\
1.00   2.00e-5\\
};

\end{axis}
\end{tikzpicture}
          \caption{Computed errors $E_{p}$ w.r.t.~the mesh size~$h$.}
        \label{fig:Tria_waves_errors2D_h_L2_p}
    \end{subfigure}
    \begin{subfigure}[b]{0.5\textwidth}
        \resizebox{\textwidth}{!}{\definecolor{mycolor2}{rgb}{0.00000,1.00000,1.00000}%
\begin{tikzpicture}
\begin{axis}[%
width=3.875in,
height=2.50in,
at={(2.6in,1.099in)},
scale only axis,
xmode=log,
xmin=5.8926e-01,
xmax=2.3570e+00,
xminorticks=true,
xlabel = {$h$ [-]},
ylabel = {$\|q(\cdot,T)-u_q(w_{q,h}^{(N)})\|_{L^2(\Omega)}$},
xticklabel={\pgfmathparse{exp(\tick)}\pgfmathprintnumber{\pgfmathresult}},
x tick label style={
/pgf/number format/.cd, fixed, fixed zerofill,
precision=2},
ymode=log,
ymin=1e-6,
ymax=2e-1,
yminorticks=true,
axis background/.style={fill=white},
title style={font=\bfseries},
title={Errors $E_{q}$},
xmajorgrids,
xminorgrids,
ymajorgrids,
yminorgrids,
legend style={at={(0.96,0.40)},legend cell align=left, draw=white!15!black}
]
              
\addplot [color=red, line width=2.0pt, mark=*]
  table[row sep=crcr]{%
2.3570e+00  1.3264e-01\\
1.1785e+00  4.1911e-02\\
7.8568e-01  1.9033e-02\\
5.8926e-01  1.0806e-02\\
};
\addlegendentry{$\ell=1$}

\addplot [color=violet, line width=2.0pt, mark=*]
  table[row sep=crcr]{%
2.3570e+00  3.5972e-02\\
1.1785e+00  5.4823e-03\\
7.8568e-01  1.6876e-03\\
5.8926e-01  7.1155e-04\\
};
\addlegendentry{$\ell=2$}

\addplot [color=blue, line width=2.0pt, mark=*] 
  table[row sep=crcr]{
2.3570e+00  8.2346e-03\\
1.1785e+00  5.0127e-04\\
7.8568e-01  1.0858e-04\\
5.8926e-01  3.4443e-05\\
};
\addlegendentry{$\ell=3$}

\addplot [color=teal, line width=2.0pt, mark=*]
  table[row sep=crcr]{%
2.3570e+00  3.1918e-03\\
1.1785e+00  1.0360e-04\\
7.8568e-01  1.1021e-05\\
5.8926e-01  2.6142e-06\\
};
\addlegendentry{$\ell=4$}

\node[right, align=left, text=black, font=\footnotesize]
at (axis cs:1.005,0.015) {$2$};

\addplot [color=black, line width=1.5pt]
  table[row sep=crcr]{%
1.00   2.0e-2\\
0.75   1.125e-2\\
1.00   1.125e-2\\
1.00   2.0e-2\\
};

\node[right, align=left, text=black, font=\footnotesize]
at (axis cs:1.005,1.25e-3) {$3$};

\addplot [color=black, line width=1.5pt]
  table[row sep=crcr]{%
1.00   2.0e-03\\
0.75   8.4e-04\\
1.00   8.4e-04\\
1.00   2.0e-03\\
};

\node[right, align=left, text=black, font=\footnotesize]
at (axis cs:1.005,9e-5) {$4$};

\addplot [color=black, line width=1.5pt]
  table[row sep=crcr]{%
1.00   1.80e-4\\
0.75   5.40e-5\\
1.00   5.40e-5\\
1.00   1.80e-4\\
};

\node[right, align=left, text=black, font=\footnotesize]
at (axis cs:1.005,1e-5) {$5$};

\addplot [color=black, line width=1.5pt]
  table[row sep=crcr]{%
1.00   2.00e-5\\
0.75   4.72e-6\\
1.00   4.72e-6\\
1.00   2.00e-5\\
};

\end{axis}
\end{tikzpicture}
          \caption{Computed errors $E_{q}$
          w.r.t.~the mesh size $h$.}
        \label{fig:Tria_waves_errors2D_h_L2_q}
    \end{subfigure}
    \\[0.5mm]
    \begin{subfigure}[b]{0.5\textwidth}
          \resizebox{\textwidth}{!}{\definecolor{mycolor2}{rgb}{0.00000,1.00000,1.00000}%
\begin{tikzpicture}
\begin{axis}[%
width=3.875in,
height=2.50in,
at={(2.6in,1.099in)},
scale only axis,
xmode=log,
xmin=5.8926e-01,
xmax=2.3570e+00,
xminorticks=true,
xlabel = {$h$ [-]},
ylabel = {$\|\nabla p(\cdot,T) + \sigma_{p,h}^{(N)}\|_{L^2(\Omega)^d}$},
xticklabel={\pgfmathparse{exp(\tick)}\pgfmathprintnumber{\pgfmathresult}},
x tick label style={
/pgf/number format/.cd, fixed, fixed zerofill,
precision=2},
ymode=log,
ymin=5e-6,
ymax=1e+0,
yminorticks=true,
axis background/.style={fill=white},
title style={font=\bfseries},
title={Errors $E_{\boldsymbol{\sigma}_p}$},
xmajorgrids,
xminorgrids,
ymajorgrids,
yminorgrids,
legend style={at={(0.96,0.40)},legend cell align=left, draw=white!15!black}
]
              
\addplot [color=red, line width=2.0pt, mark=square*, dotted, mark options=solid]
  table[row sep=crcr]{%
2.3570e+00  2.8875e-01\\
1.1785e+00  1.5408e-01\\
7.8568e-01  1.0439e-01\\
5.8926e-01  7.8800e-02\\
};
\addlegendentry{$\ell=1$}

\addplot [color=violet, line width=2.0pt, mark=square*, dotted, mark options=solid]
  table[row sep=crcr]{%
2.3570e+00  2.8890e-02\\
1.1785e+00  7.1012e-03\\
7.8568e-01  2.8343e-03\\
5.8926e-01  1.5000e-03\\
};
\addlegendentry{$\ell=2$}

\addplot [color=blue, line width=2.0pt, mark=square*, dotted, mark options=solid]
  table[row sep=crcr]{%
2.3570e+00  1.8913e-02\\
1.1785e+00  1.2891e-03\\
7.8568e-01  3.4854e-04\\
5.8926e-01  1.4196e-04\\
};
\addlegendentry{$\ell=3$}

\addplot [color=teal, line width=2.0pt, mark=square*, dotted, mark options=solid]
  table[row sep=crcr]{
2.3570e+00  6.5771e-03\\
1.1785e+00  2.8729e-04\\
7.8568e-01  4.0557e-05\\
5.8926e-01  9.6412e-06\\
};
\addlegendentry{$\ell=4$}

\node[right, align=left, text=black, font=\footnotesize]
at (axis cs:1.005,9e-2) {$1$};

\addplot [color=black, line width=1.5pt]
  table[row sep=crcr]{%
1.00   1.0e-1\\
0.75   7.5e-2\\
1.00   7.5e-2\\
1.00   1.0e-1\\
};

\node[right, align=left, text=black, font=\footnotesize]
at (axis cs:1.005,2.25e-3) {$2$};

\addplot [color=black, line width=1.5pt]
  table[row sep=crcr]{%
1.00   3.0000e-3\\
0.75   1.6875e-3\\
1.00   1.6875e-3\\
1.00   3.0000e-3\\
};

\node[right, align=left, text=black, font=\footnotesize]
at (axis cs:1.005,3.25e-4) {$3$};

\addplot [color=black, line width=1.5pt]
  table[row sep=crcr]{%
1.00   5.00e-04\\
0.75   2.10e-04\\
1.00   2.10e-04\\
1.00   5.00e-04\\
};

\node[right, align=left, text=black, font=\footnotesize]
at (axis cs:1.005,3.5e-5) {$4$};

\addplot [color=black, line width=1.5pt]
  table[row sep=crcr]{%
1.00   6.00e-5\\
0.75   1.88e-5\\
1.00   1.88e-5\\
1.00   6.00e-5\\
};

\end{axis}
\end{tikzpicture}
          \caption{Computed errors $E_{\boldsymbol{\sigma}_p}$ w.r.t.~the mesh size~$h$.}
        \label{fig:Tria_waves_errors2D_h_DG_p}
    \end{subfigure}
    \begin{subfigure}[b]{0.5\textwidth}
        \resizebox{\textwidth}{!}{\definecolor{mycolor2}{rgb}{0.00000,1.00000,1.00000}%
\begin{tikzpicture}
\begin{axis}[%
width=3.875in,
height=2.50in,
at={(2.6in,1.099in)},
scale only axis,
xmode=log,
xmin=5.8926e-01,
xmax=2.3570e+00,
xminorticks=true,
xlabel = {$h$ [-]},
ylabel = {$\|\nabla q(\cdot,T) + \sigma_{q,h}^{(N)}\|_{L^2(\Omega)^d}$},
xticklabel={\pgfmathparse{exp(\tick)}\pgfmathprintnumber{\pgfmathresult}},
x tick label style={
/pgf/number format/.cd, fixed, fixed zerofill,
precision=2},
ymode=log,
ymin=5e-6,
ymax=1e+0,
yminorticks=true,
axis background/.style={fill=white},
title style={font=\bfseries},
title={Errors $E_{\boldsymbol{\sigma}_q}$},
xmajorgrids,
xminorgrids,
ymajorgrids,
yminorgrids,
legend style={at={(0.96,0.40)},legend cell align=left, draw=white!15!black}
]
              
\addplot [color=red, line width=2.0pt, mark=square*, dotted, mark options=solid]
  table[row sep=crcr]{%
2.3570e+00  6.1449e-01\\
1.1785e+00  3.9395e-01\\
7.8568e-01  2.6584e-01\\
5.8926e-01  2.1486e-01\\
};
\addlegendentry{$\ell=1$}

\addplot [color=violet, line width=2.0pt, mark=square*, dotted, mark options=solid]
  table[row sep=crcr]{%
2.3570e+00  1.8103e-01\\
1.1785e+00  4.8300e-02\\
7.8568e-01  2.1647e-02\\
5.8926e-01  1.2200e-02\\
};
\addlegendentry{$\ell=2$}

\addplot [color=blue, line width=2.0pt, mark=square*, dotted, mark options=solid]
  table[row sep=crcr]{%
2.3570e+00  3.9101e-02\\
1.1785e+00  4.0979e-03\\
7.8568e-01  1.3000e-03\\
5.8926e-01  5.5590e-04\\
};
\addlegendentry{$\ell=3$}

\addplot [color=teal, line width=2.0pt, mark=square*, dotted, mark options=solid]
  table[row sep=crcr]{
2.3570e+00  7.6115e-03\\
1.1785e+00  5.0603e-04\\
7.8568e-01  1.1021e-04\\
5.8926e-01  3.4871e-05\\
};
\addlegendentry{$\ell=4$}

\node[right, align=left, text=black, font=\footnotesize]
at (axis cs:1.005,1.75e-1) {$1$};

\addplot [color=black, line width=1.5pt]
  table[row sep=crcr]{%
1.00   2.0e-1\\
0.75   1.5e-1\\
1.00   1.5e-1\\
1.00   2.0e-1\\
};

\node[right, align=left, text=black, font=\footnotesize]
at (axis cs:1.005,1.45e-2) {$2$};

\addplot [color=black, line width=1.5pt]
  table[row sep=crcr]{%
1.00   2.000e-2\\
0.75   1.125e-2\\
1.00   1.125e-2\\
1.00   2.000e-2\\
};

\node[right, align=left, text=black, font=\footnotesize]
at (axis cs:1.005,9.5e-4) {$3$};

\addplot [color=black, line width=1.5pt]
  table[row sep=crcr]{%
1.00   1.50e-3\\
0.75   6.30e-4\\
1.00   6.30e-4\\
1.00   1.50e-3\\
};

\node[right, align=left, text=black, font=\footnotesize]
at (axis cs:1.005,8e-5) {$4$};

\addplot [color=black, line width=1.5pt]
  table[row sep=crcr]{%
1.00   1.50e-4\\
0.75   4.74e-5\\
1.00   4.74e-5\\
1.00   1.50e-4\\
};

\end{axis}
\end{tikzpicture}}
          \caption{Computed errors $E_{\boldsymbol{\sigma}_q}$
          w.r.t.~the mesh size $h$.}
        \label{fig:Tria_waves_errors2D_h_DG_q}
    \end{subfigure}
    \caption{Test case 2: Computed errors and convergence rates w.r.t.~the polynomial degree~$\ell$.}
    \label{fig:Tria_waves_errors2D}
\end{figure}
\subsubsection*{Convergence with respect to the mesh size}
We %
analyze the convergence with respect to the mesh size~$h$. For the space discretization, we adopt structured triangular meshes with~$N_{\mathsf{el}} = 72,\, 288,\, 648,\, 1152$. 
We take as final time~$T = 1$, and the step of the time discretization is set as $\tau\sim\mathcal{O}(h^{\ell+1})$, so as to equilibrate the errors in space and time.
\par
In the first rows of Figures~\ref{fig:waves2Dp} and~\ref{fig:waves2Dq}, we report the numerical %
approximations $p_{h}^{(n)}$ and $q_{h}^{(n)}$, 
respectively, obtained with the mesh of $72$ elements for different %
values of the polynomial degree %
$\ell=1,...,5$. In the left column of both figures, we report the initial conditions $p_{h}^{(0)}$ and $q_{h}^{(0)}$ at time $t=0$. In the second rows of Figures~\ref{fig:waves2Dp} and~\ref{fig:waves2Dq}, we report the associated approximation errors. We can observe that higher polynomial degrees result in a reduction of the projection error of the initial condition and of the %
approximation error at the final time. Moreover, we highlight that most of the error is spatially located near the wavefront, as expected. We recall that, in method~\eqref{EQ::VARIATIONAL}, the initial conditions are imposed weakly, which slightly differs from the standard backward Euler time-stepping scheme.
\par
To quantify the convergence properties of the discretization scheme, we perform an $h$-convergence analysis for $\ell=1,...,4$. In Figures  \ref{fig:Tria_waves_errors2D_h_L2_p} and \ref{fig:Tria_waves_errors2D_h_L2_q}, we report the errors~$E_p$ and $E_q$, respectively. Additionally, in Figures  \ref{fig:Tria_waves_errors2D_h_L2_p} and \ref{fig:Tria_waves_errors2D_h_L2_q}, we report the errors $E_{\boldsymbol{\sigma}_p}$ and $E_{\boldsymbol{\sigma}_q}$. We can observe that the 
the errors~$E_\star$ and~$E_{\boldsymbol{\sigma}_\star}$ %
decrease with optimal %
convergence rates~$\mathcal{O}(h^{\ell+1})$ and~$\mathcal{O}(h^{\ell})$, respectively. 
\subsubsection*{Comparison with an interior penalty DG method}
\begin{table}[t]
    \centering
    \setlength{\extrarowheight}{1.5pt}
    \begin{tabular}{|c|c c c c c|}
    \multicolumn{6}{c}{$\boldsymbol{h \approx 2.3570}$ \textbf{and} $\boldsymbol{\tau = 2.5\times10^{-1}}$} \\[1pt]
    \hline
    \textbf{Method} & $\ell=1$ & $\ell=2$ & $\ell=3$ & $\ell=4$ & $\ell=5$
    \\ \hline 
    SP-LDG 
    & $2.66\times10^{-1}$ 
    & $2.28\times10^{-1}$  
    & $2.35\times10^{-1}$
    & $2.35\times10^{-1}$
    & $2.35\times10^{-1}$
    \\ 
    IPDG 
    & $9.71\times10^{-1}$ 
    & $1.81\times10^{-1}$  
    & $1.80\times10^{-1}$ 
    & $1.75\times10^{-1}$
    & $1.74\times10^{-1}$
    \\ \hline  
    IPDG overshoot 
    & $+1.34\times10^{-1}$ 
    & $+1.10\times10^{-3}$  
    & $+5.01\times10^{-4}$ 
    & $+1.05\times10^{-4}$
    & $+2.03\times10^{-7}$
    \\ \hline  
    \end{tabular}
    \begin{tabular}{|c|c c c c c|}
    \multicolumn{6}{c}{$\boldsymbol{h \approx 2.3570}$ \textbf{and} $\boldsymbol{\tau = 1.0\times10^{-1}}$} \\[1pt]
    \hline
    \textbf{Method} & $\ell=1$ & $\ell=2$ & $\ell=3$ & $\ell=4$ & $\ell=5$
    \\ \hline 
    SP-LDG 
    & $2.23\times10^{-1}$ 
    & $8.01\times10^{-2}$  
    & $8.61\times10^{-2}$ 
    & $8.66\times10^{-2}$ 
    & $8.65\times10^{-2}$ 
    \\ 
    IPDG 
    & $1.13\times10^{+0}$ 
    & $8.99\times10^{-2}$  
    & $8.30\times10^{-2}$ 
    & $7.73\times10^{-2}$
    & $7.70\times10^{-2}$
    \\ \hline 
    IPDG overshoot 
    & $+6.89\times10^{-1}$ 
    & $+7.90\times10^{-3}$  
    & $+6.01\times10^{-4}$ 
    & $+1.83\times10^{-4}$
    & $+2.20\times10^{-6}$
    \\ \hline   
    \end{tabular}
    \caption{Test case 2: Computed errors in the $L^2(\Omega)$ norm of solution $p$ at final time $T=5$ with %
    the structure-preserving LDG (SP-LDG) and IPDG~\cite{Antonietti_Bonizzoni_Corti_Dallolio:2024} methods.
    }    
    \label{tab:errors_p_waves}
\end{table}
\begin{table}[t]
    \centering
    \setlength{\extrarowheight}{1.5pt}
    \begin{tabular}{|c|c c c c c|}
    \multicolumn{6}{c}{$\boldsymbol{h \approx 2.3570}$ \textbf{and} $\boldsymbol{\tau = 2.5\times10^{-1}}$} \\[1pt]
    \hline
    \textbf{Method} & $\ell=1$ & $\ell=2$ & $\ell=3$ & $\ell=4$ & $\ell=5$
    \\ \hline 
    SP-LDG 
    & $2.98\times10^{-1}$ 
    & $2.28\times10^{-1}$  
    & $2.35\times10^{-1}$ 
    & $2.35\times10^{-1}$
    & $2.35\times10^{-1}$
    \\ 
    IPDG 
    & $1.02\times10^{+0}$ 
    & $4.64\times10^{-1}$  
    & $4.64\times10^{-1}$ 
    & $4.61\times10^{-1}$
    & $4.61\times10^{-1}$
    \\ \hline 
    IPDG undershoot 
    & $-1.23\times10^{-1}$ 
    & $-1.44\times10^{-3}$  
    & $-5.01\times10^{-4}$ 
    & $-1.15\times10^{-4}$
    & $-2.97\times10^{-7}$
    \\ \hline   
    \end{tabular}
        \begin{tabular}{|c|c c c c c|}
    \multicolumn{6}{c}{$\boldsymbol{h \approx 2.3570}$ \textbf{and} $\boldsymbol{\tau = 1.0\times10^{-1}}$} \\[1pt]
    \hline
    \textbf{Method} & $\ell=1$ & $\ell=2$ & $\ell=3$ & $\ell=4$ & $\ell=5$
    \\ \hline 
    SP-LDG 
    & $2.67\times10^{-1}$ 
    & $8.45\times10^{-2}$  
    & $8.62\times10^{-2}$ 
    & $8.66\times10^{-2}$ 
    & $8.65\times10^{-2}$ 
    \\ 
    IPDG 
    & $1.11\times10^{+0}$ 
    & $2.17\times10^{-1}$  
    & $2.14\times10^{-1}$ 
    & $2.10\times10^{-1}$
    & $2.10\times10^{-1}$
    \\ \hline 
    IPDG undershoot 
    & $-6.32\times10^{-1}$ 
    & $-7.70\times10^{-3}$  
    & $-5.18\times10^{-4}$ 
    & $-5.48\times10^{-5}$
    & $-1.82\times10^{-6}$
    \\ \hline   
    \end{tabular}
    \caption{Test case 2: Computed errors in the $L^2(\Omega)$ norm of solution $q$ at final time $T=5$ with the structure-preserving LDG (SP-LDG) and IPDG~\cite{Antonietti_Bonizzoni_Corti_Dallolio:2024} methods.}    
    \label{tab:errors_q_waves}
\end{table}
As a final test, we compare the results obtained with our method against those of a non-structure-preserving method in the literature. In particular, we focus on the interior penalty DG (IPDG) method proposed in~\cite{Antonietti_Bonizzoni_Corti_Dallolio:2024}, which is able to approximate the analytical solution correctly only for a sufficiently refined space mesh or a high polynomial degree. To guarantee a fair comparison, we adopt an implicit Euler time-stepping scheme for the time discretization. In this simulation, we employ the structured mesh of $72$ triangular elements of the previous test, and two time steps, $\tau=0.10$ and $\tau = 0.25$.
\par
In Tables~\ref{tab:errors_p_waves} and~\ref{tab:errors_q_waves}, we report the errors in the~$L^2(\Omega)$ norm at the final time.
The results obtained show that the method proposed in~\cite{Antonietti_Bonizzoni_Corti_Dallolio:2024} 
approximates the exact solution accurately only for sufficiently high polynomial degrees. The two methods become comparable for higher-order approximations; however, our method can capture the solution more accurately and respect physical bounds, even with low polynomial degrees. Moreover, the IPDG method 
suffers from overshoots in the approximation of~$p$, as well as undershoots in the approximation of~$q$. Specifically, the numerical solution violates the bounds valid at the continuous level (see Equation~\eqref{EQ:HETER_BOUNDSCQ}),
whereas these bounds are enforced strongly in our LDG formulation.  It can be noted that these undershoots can be reduced with a higher space resolution, while they are not monotonically decreasing with the time step~$\tau$, even if the $L^2(\Omega)$ errors are smaller.
\subsection{Test case 3: simulations of different stable equilibrium behaviors}
As discussed in~\cite{Mattia}, for~$\Upsilon_{pq}>0$, system~\eqref{EQN::HETERODIMER} admits a stable equilibrium~$(p_E,q_E)\coloneqq (\lambda_q/\mu_{pq},\equilq)$. Depending on the problem coefficients, the approach of the solution to the equilibrium~$(p_E,q_E)$ could be either monotonic (stable \emph{node}) or oscillatory (stable \emph{focus}). The goal of this section is to show the ability of our method to predict the correct asymptotic behavior of the equilibrium in both cases.
\par
The analytical study of the bifurcation, which separates the two behaviors, is feasible only in the case of a one-dimensional propagating front and isotropic, constant diffusion $\D={\mathrm{d}}\,\mathbb{I}_2$. This analysis for system~\eqref{EQN::HETERODIMER} can be found in \cite{kabir_numerical_2022}. Following the steps used in~\cite[\S6]{kabir_numerical_2022} for two populations with equal and constant isotropic diffusion $\mathrm{d}$ and imposing the absence of imaginary parts of the Fourier modes, we can observe that the equilibrium is a stable node if and only if
\begin{equation}
4\lambda_p\lambda_q^3-4\kappa_p\lambda_q^2\mu_{pq}+\kappa_p^2\mu_{pq}^2 \geq 0,  
\end{equation}
independently of the diffusion coefficients applied.
\par
For both %
simulated tests, we consider a rectangular space domain~$\Omega = (-10,10)\times(0,5)$, and the final time~$T=25$. We impose homogeneous Neumann boundary conditions on~$\Gamma \times (0, T)$. Concerning the discretization, we %
we adopt a structured triangular mesh with~$N_{\mathsf{el}} = 288$ and polynomial degree $\ell=2$. The step of the time discretization is fixed as $\tau = 5\times 10^{-3}$. 
As initial conditions, we consider the following analytical continuous functions:
\begin{equation*}
    p_0(x,y) = \dfrac{39}{40}\equilp, \qquad q_0(x,y)=\dfrac{\equilq}{2}\,e^{-x^2}.
\end{equation*}
\par
To test the two distinct behaviors of the equilibrium, we vary the reaction parameters in the simulations. First, we fix $\kappa_p = 4$, $\lambda_p = 0.2$, $\lambda_q = 4.5$, and $\mu_{pq} = 1$ to obtain a stable focus. These parameters are associated with an unstable equilibrium $(\equilp,0)=(20,0)$ and a stable one $(\lambda_q/\mu_{pq},\equilq)\simeq(4.5,0.689)$. 
Moreover, we consider $\kappa_p=0.1$, $\lambda_p=\lambda_q=0.1$, and $\mu_{pq}=1$ to simulate a stable node. With these parameters, the equilibrium $(\equilp,0)=(1,0)$ is unstable, whereas~$(\lambda_q/\mu_{pq},\equilq)=(0.1,0.9)$ is stable.
\begin{figure}[t!]
    \centering
    {\includegraphics[width=\textwidth]{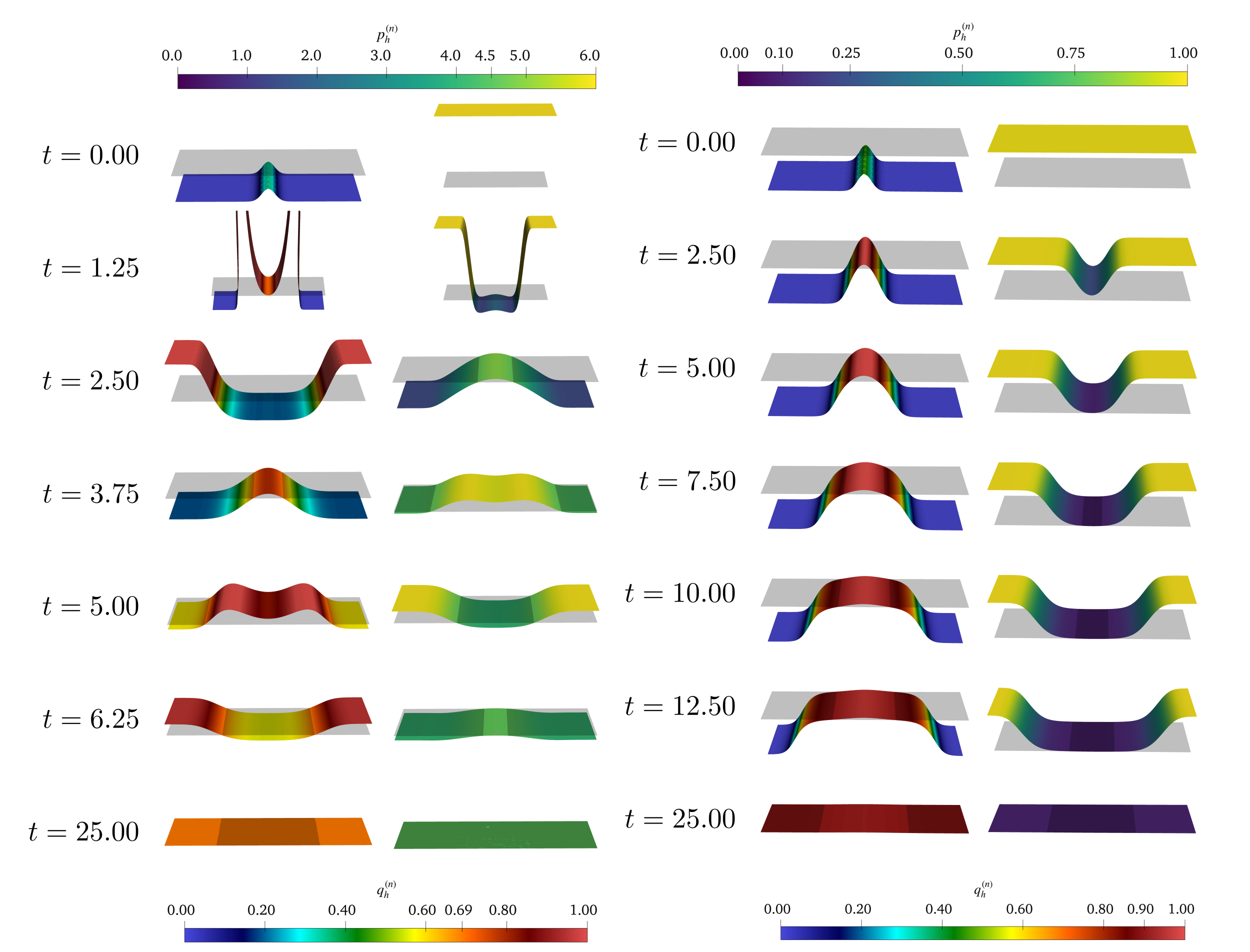}}
\caption{Test case 3: Numerical solutions $q_h^{(n)}$ (first column of each panel) and $p_h^{(n)}$ (second column of each panel) at different times in the case of stable focus (left panel) and stable node equilibrium (right panel).}    
\label{fig:TC3_solution}
\end{figure}
\par
In Figure~\ref{fig:TC3_solution}, we report the results of the numerical simulation, together with the plane associated with the stable equilibrium constants. In the left panel, we report the results for the stable focus behavior, and it can be observed that the solution converges to the equilibrium at long times ($t=25$), after exhibiting some damped oscillations around this value for both concentrations $q$ and $p$. On the other hand, in the results on the right panel for the stable node, we observe that the concentrations reach equilibrium monotonically, and without large peak values in the solutions at intermediate times.

Finally, we tested our method with initial condition given by the stable equilibrium~$(p_E,q_E)$. The solution remains constant and equal to~$(p_E,q_E)$, up to machine precision, during the whole simulation.
\subsection{Test case 4: Impact of diffusion on population spatial distributions}
 In this section, we discuss the impact of the diffusion coefficient~$\D$ on the spatial distribution of the solutions of system~\eqref{EQN::HETERODIMER} and test the capabilities of our method to reproduce anisotropic dynamics of the system. Moreover, we will show that, also in the case of a stable node equilibrium, it is fundamental to apply an unbounded transformation for the variable $q$, because the solution can locally overcome the equilibrium value $\equilq$ and asymptotically approach it from above in a monotonic way. Namely, we cannot provide a bound on the maximum value reached by $q$.
 \par
We consider a square space domain~$\Omega = (-2,2)^2$, and the final time~$T=15$. Moreover, we introduce four subdomains of $\Omega$ (see Figure~\ref{fig:TC4_subdomains}), namely, $\Omega_1 = (-2,0)^2$, $\Omega_2 = (0,2)\times(-2,0)$, $\Omega_3 = (-2,0)\times(0,2)$, and $\Omega_4 = (0,2)^2$. We impose homogeneous Neumann boundary conditions, and we fix the reaction parameters $\kappa_p=0.1$, $\lambda_p=\lambda_q=0.1$, and $\mu_{pq}=1$. These parameters are associated with an unstable equilibrium $(\equilp,0)=(1,0)$ and a stable one $(\lambda_q/\mu_{pq},\equilq)=(0.1,0.9)$. 
Analyzing the associated ODE with these parameter values, one can conclude that the stable equilibrium is a node, so the solution should monotonically approach the steady state. Concerning the diffusion coefficient, we consider four different cases: 
\begin{enumerate}[left = 0.2in, label = \textbf{(TC 4.\arabic*)}, ref = (TC 4.\arabic*)]
    \item \label{TC1} constant isotropic diffusion tensor with high diffusion $\boldsymbol{D}=5\times10^{-2} \mathbb{I}$;
    \item \label{TC2} constant isotropic diffusion tensor with low diffusion $\boldsymbol{D}=10^{-2} \mathbb{I}$;
    \item \label{TC3} discontinuous isotropic diffusion tensor: 
    \begin{equation*}
        \boldsymbol{D}=
        \begin{cases}
            10^{-3} \mathbb{I}, & \mathrm{in}\,\Omega_1, \\
            10^{-2} \mathbb{I}, & \mathrm{in}\,\Omega_2, \\
            5\times10^{-3} \mathbb{I}, & \mathrm{in}\,\Omega_3, \\
            5\times10^{-2} \mathbb{I}, & \mathrm{in}\,\Omega_4; \\
        \end{cases}
    \end{equation*}
    \item \label{TC4} discontinuous anisotropic diffusion tensor (see Figure~\ref{fig:TC4_directions}):
    \begin{equation*}
        \boldsymbol{D}=
        \begin{cases}
            10^{-3} \mathbb{I}, & \mathrm{in}\,\Omega_1, \\
            10^{-2} \mathbb{I}, & \mathrm{in}\,\Omega_2, \\
            10^{-3} \mathbb{I} + 5\times10^{-3} \boldsymbol{a}(x,y)\otimes\boldsymbol{a}(x,y), & \mathrm{in}\,\Omega_3, \\
            10^{-2} \mathbb{I} + 5\times10^{-3} \boldsymbol{a}(x,y)\otimes\boldsymbol{a}(x,y), & \mathrm{in}\,\Omega_4, 
        \end{cases}
        \quad \text{ with $\boldsymbol{a}(\boldsymbol{x}) = \left((1-y)^2+x^4\right)^{-1/2} \begin{pmatrix}
        1-y \\
        x^2
        \end{pmatrix}$.}
    \end{equation*}
\end{enumerate}
\begin{figure}[t!]
\centering
\begin{subfigure}[b]{0.3\textwidth}
	\centering
	{\includegraphics[width=\textwidth]{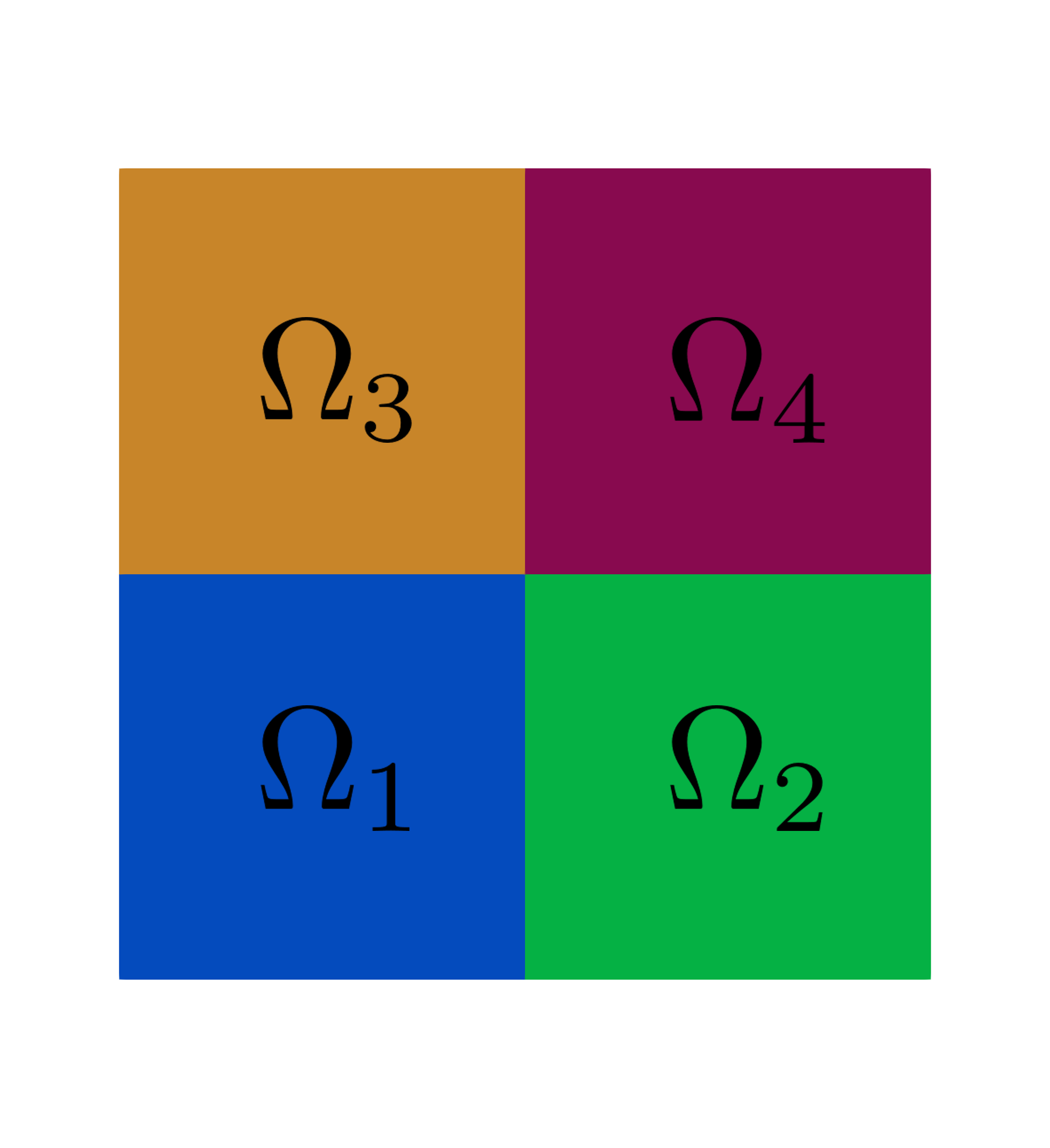}}
    \caption{$\Omega$ subdomains.}
    \label{fig:TC4_subdomains}
\end{subfigure}
\begin{subfigure}[b]{0.3\textwidth}
	\centering
	{\includegraphics[width=\textwidth]{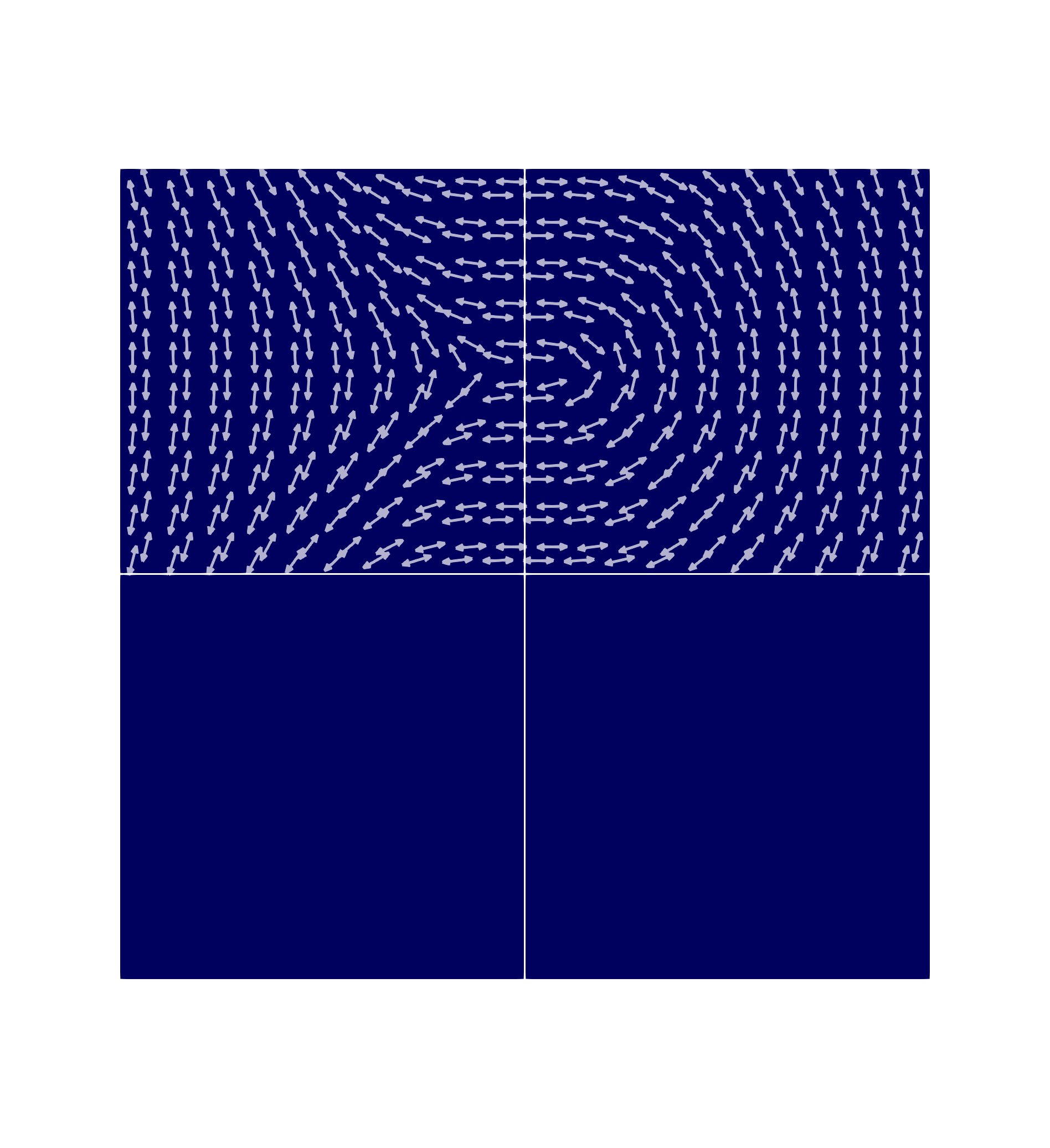}}
    \caption{Anisotropic directions $\boldsymbol{a}$.}
    \label{fig:TC4_directions}
\end{subfigure}
\begin{subfigure}[b]{0.3\textwidth}
	\centering
	{\includegraphics[width=\textwidth]{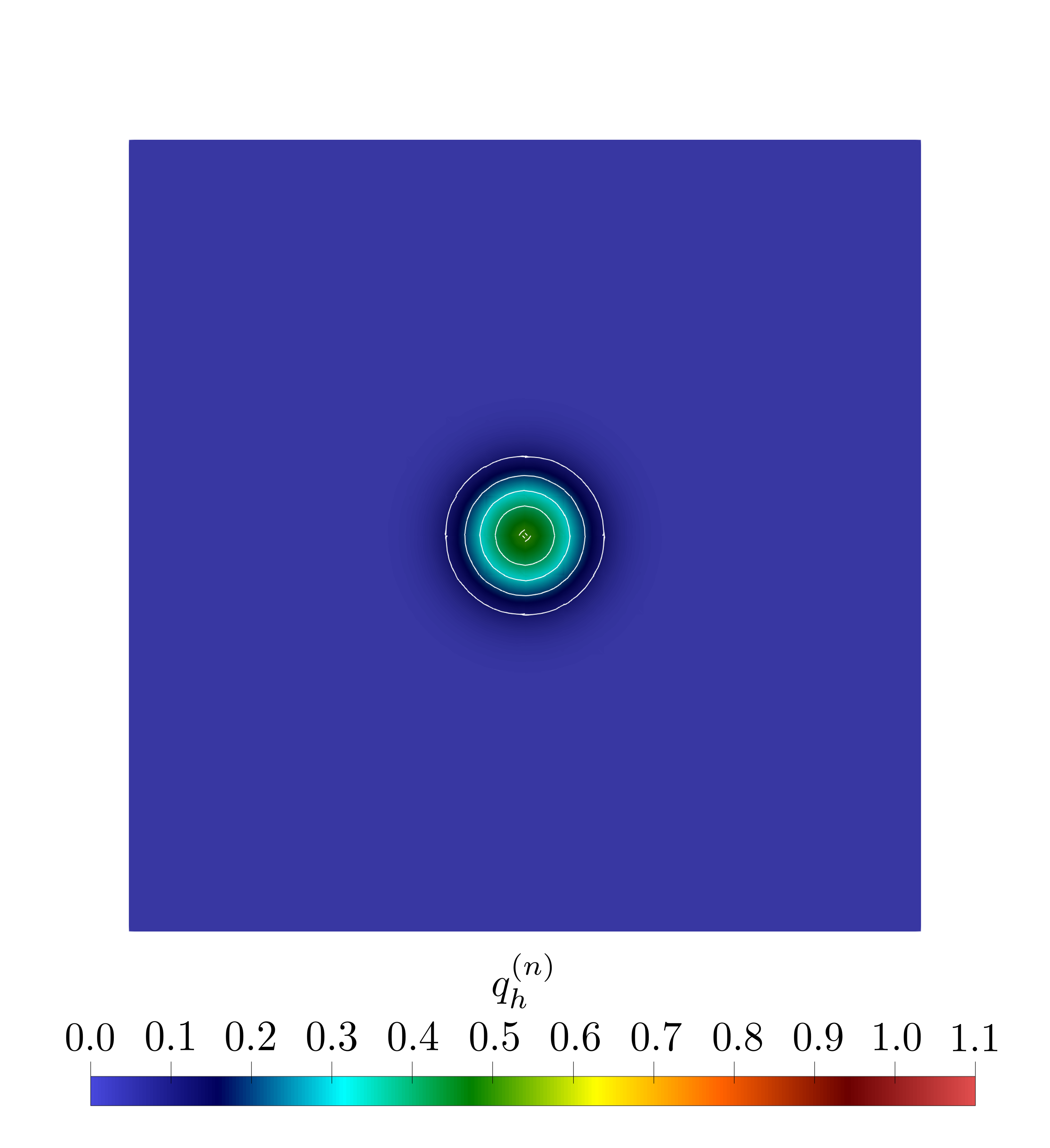}}
    \caption{Initial condition $q^{(0)}_h$.}
    \label{fig:TC4_initial}
\end{subfigure}
\caption{Test case 4: (a) subdomains of $\Omega$, (b) anisotropic directions $\boldsymbol{a}(x,y)$ in subdomains $\Omega_3$ and $\Omega_4$, and (c) discrete initial condition $q^{(0)}_h$.
}
\label{fig:TC4_setup}
\end{figure}
Concerning the discretization, %
we adopt a structured triangular mesh with~$N_{\mathsf{el}} = 800$ ($h \approx 0.2828$) and polynomial degree $\ell=2$. The step of the time discretization is fixed as $\tau = 5\times 10^{-3}$. As initial conditions, we consider 
the following analytical continuous functions:
\begin{equation*}
    p_0(x,y) = 1, \qquad q_0(x,y)=0.5\,e^{-10(x^2+y^2)}.
\end{equation*}
We report the plot of the initial condition for the variable $q$ in Figure~\ref{fig:TC4_initial}.
\begin{figure}[t!]
    \centering
    {\includegraphics[width=\textwidth]{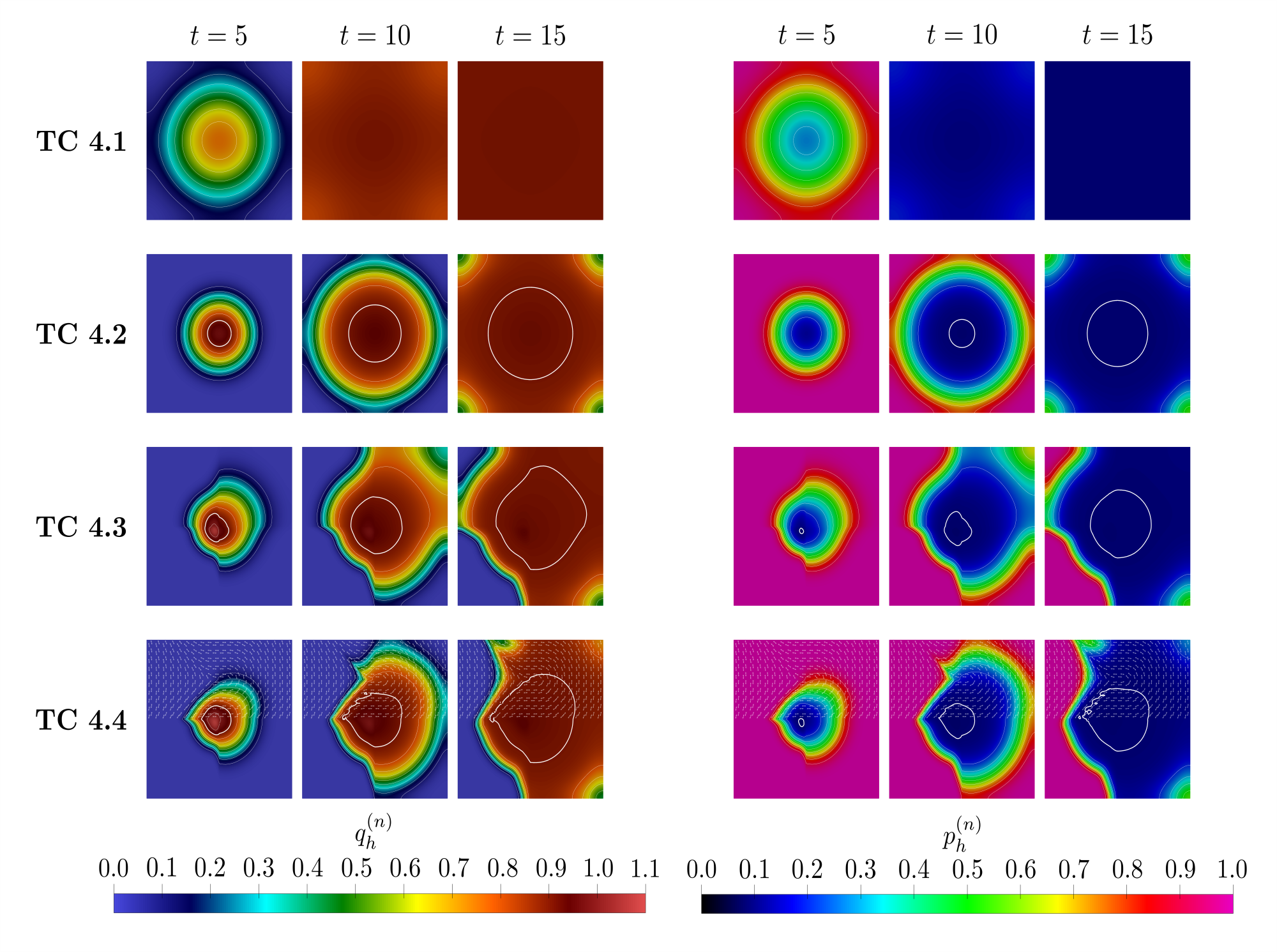}}
\caption{Test case 4: Numerical solutions $p_h^{(n)}$ (first column) and $q_h^{(n)}$ (second column) at different times $t=5,10,15$ considering four the different tested diffusion tensors.}    
\label{fig:TC4_solution}
\end{figure}
In Figure~\ref{fig:TC4_solution}, we report the numerical solution computed for \ref{TC1} to \ref{TC4}. 
The numerical solution is depicted at three different times $t=5,\, 10,\, 15$, and the isolines of the solutions at levels $\{0.1,0.2,...,1.0\}$ are also reported. In particular, the isolines associated with the stable equilibrium $(p,q)=(0.1,0.9)$ are reported as thicker lines in the visualization. As we can observe, in the first case~\ref{TC1}, the solution approaches the equilibrium points monotonically from below and above for~$q$ and~$p$, respectively. In contrast, a reduction of the diffusion value~\ref{TC2} causes a change in the dynamics, and the solution exceeds the equilibrium values and then approaches them monotonically from above for~$q$ and from below for~$p$.
\par
In the latter cases~\ref{TC3} and~\ref{TC4}, we have discontinuous diffusion tensors. These cases clearly demonstrate the impact of different diffusion regions on the solution. In areas with lower diffusion values, the wave fronts are sharper, and the solution spreads more slowly throughout the domain. We also notice that, in proximity to the diffusion tensor discontinuity, $\Omega_1$ presents the highest peak of $q$ (see $t=5,10$ in Figure~\ref{fig:TC4_solution}).
Finally, focusing on~\ref{TC4}, as a result of introducing a preferential direction of diffusion in the subdomain~$\Omega_3\cup\Omega_4$, we can observe that the propagating front moves fast along the direction~$\boldsymbol{a}$. At the same time, it is significantly slower along the orthogonal direction. %
Consequently, the fronts are sharp in the orthogonal direction and soft in the~$\boldsymbol{a}$ direction (see last row of Figure~\ref{fig:TC4_solution}). This expected behavior confirms the method's ability to approximate the solution, even in the presence of anisotropic diffusion tensors.
\section{Conclusions}
\label{sec: Conclusions}
In this work, we have analyzed a two-state conformational conversion system and proposed a novel structure-preserving numerical scheme that combines a local discontinuous Galerkin space discretization with a backward Euler time-integration method. The proposed approach guarantees essential physical and mathematical properties at the discrete level — namely, positivity, boundedness, and a discrete stability bound. We prove the convergence of the numerical solution (up to subsequences) under suitable regularity assumptions. As an additional outcome of the analysis presented, we show the existence of global weak solutions satisfying the problem's physical bounds. 
Numerical experiments validated the theoretical results and
highlighted the practical performance of the proposed schemes.
Possible further developments include the analysis of multi-state conformational systems, the introduction of high-order time integration schemes, and the development of adaptive strategies to enhance computational efficiency, while guaranteeing structural properties. 

\section*{Acknowledgments}
The authors are grateful to the anonymous Referees for their valuable comments and suggestions that significantly contributed to the improvement of the manuscript. 

\section*{Funding}
PFA, MC, and SG were supported by the European Union (ERC Synergy, NEMESIS, project number
101115663). Views and opinions expressed are, however, those of the authors only and do not necessarily
reflect those of the EU or the ERC Executive Agency.
The first three authors are members of the INdAM-GNCS group.
This research was funded in part by the Austrian Science Fund (FWF) project 10.55776/F65. The present research is part of the activities of Dipartimento di Eccellenza 2023-2027 (Dipartimento di Matematica, Politecnico di Milano).
\bibliographystyle{abbrv}
\bibliography{references}

\appendix
\section{Newton's iteration}\label{SEC::NEWTON}

In this section, we derive the linear systems resulting from Newton's iteration, thereby providing an implementable algorithm. The computational bottleneck in each Newton's iteration is the evaluation of the nonlinear terms within the multivariate function and its Jacobian matrix. As the nonlinearities in our approach do not affect the interface integrals, these terms can be computed separately for each mesh element. As a result, the Jacobians are block-diagonal, endowing the method with a naturally parallelizable structure. 

We set, for convenience,~$C\coloneqq M_D M_I^{-1}B$ and, for~$\star=p,q$,
\[
\begin{split}
\GG_1^\star\big(\cvSigmastar^{\npo},\cvWstar^{\npo}\big)&\coloneqq 
\mathcal{N}_{\star}\big(\cvWstar^{\npo}\big) \cvSigmastar^{\npo} +C \cvWstar^{\npo},\\[0.2cm]
\GG_2^\star\big(\cvSigmastar^{\npo},\cvWp^{\npo},\cvWq^{\npo}\big)&\coloneqq 
\varepsilon A_{\LDG} \cvWstar^{\npo} + \frac{1}{\tau_{n + 1}}\mathcal{U}_{\star}(\cvWstar^{\npo})  - C^T \cvSigmastar^{\npo} +  J \cvWstar^{(n + 1)}\\
& \qquad - \mathcal{F}_{\star} \big(\cvWp^{\npo}, \cvWq^{\npo}) -\frac{1}{\tau_{n + 1}}\mathcal{U}_{\star, h}^{(n)}.
\end{split}
\]
Denote by $\diffsigma$ and $\diffw$ the Jacobian matrices with respect to~$\mathbf{\Sigma}_\star$ and~$\mathbf{W}_\star$, respectively. Omitting the temporal index~$n+1$,
the step~$k\to k+1$ of Newton's iteration applied to system~\eqref{EQ::MATRICIAL_COMPACT} reads as follows: for~$\star=p,q$,
\[
\begin{split}
&\diffsigma\left(\GG_1^\star\left(\cvSigmastar^{k},\cvWstar^{k}\right)\right)\big(\cvSigmastar^{k+1}-\cvSigmastar^{k}\big)
+\diffw\left(\GG_1^\star\left(\cvSigmastar^{k},\cvWstar^{k}\right)\right)\big(\cvWstar^{k+1}-\cvWstar^{k}\big)
=-\GG_1^\star\left(\cvSigmastar^{k},\cvWstar^{k}\right),\\[0.2cm]
&\diffsigma\left(\GG_2^\star\left(\cvSigmastar^{k},\cvWp^{k},\cvWq^{k}\right)\right)\big(\cvSigmastar^{k+1}-\cvSigmastar^{k}\big)
+\diffwp\left(\GG_2^\star\left(\cvSigmastar^{k},\cvWp^{k},\cvWq^{k}\right)\right)\big(\cvWp^{k+1}-\cvWp^{k}\big)\\
&\hspace{4cm} +\diffwq\left(\GG_2^\star\left(\cvSigmastar^{k},\cvWp^{k},\cvWq^{k}\right)\right)\big(\cvWq^{k+1}-\cvWq^{k}\big)
=-\GG_2^\star\left(\cvSigmastar^{k},\cvWp^{k},\cvWq^{k}\right).
\end{split}
\]
We compute
\[
\begin{split}
\diffsigma\left(\GG_1^\star\left(\cvSigmastar^{k},\cvWstar^{k}\right)\right)
&=\mathcal{N}_{\star}\big(\cvWstar^{k}\big),\\
\diffw\left(\GG_1^\star\left(\cvSigmastar^{k},\cvWstar^{k}\right)\right)
&=\diffw\left(\mathcal{N}_{\star}\big(\cvWstar^{k}\big)\right)\cvSigmastar^{k}+C,\\
\diffsigma\left(\GG_2^\star\left(\cvSigmastar^{k},\cvWp^{k},\cvWq^{k}\right)\right)
&=-C^T,\\
\diffw\left(\GG_2^\star\left(\cvSigmastar^{k},\cvWp^{k},\cvWq^{k}\right)\right)&=\varepsilon A_{\LDG}+\frac{1}{\tau_{n + 1}}\diffw\left(\mathcal{U}_{\star}(\cvWstar^{k})\right)+J
-\diffw\left(\mathcal{F}_{\star} \big(\cvWp^{k}, \cvWq^{k})\right),\\
\diffwq\left(\GG_2^p\left(\cvSigmap^{k},\cvWp^{k},\cvWq^{k}\right)\right)&=-\diffwq\left(\mathcal{F}_{p} \big(\cvWp^{k}, \cvWq^{k})\right),\\
\diffwp\left(\GG_2^q\left(\cvSigmaq^{k},\cvWp^{k},\cvWq^{k}\right)\right)&=-\diffwp\left(\mathcal{F}_{q} \big(\cvWp^{k}, \cvWq^{k})\right).
\end{split}
\]
Therefore, the Newton iteration applied to system~\eqref{EQ::MATRICIAL_COMPACT} is as follows: for~$\star=p,q$,
\begin{equation}\label{EQ::NEWTON}
\begin{split}
&\mathcal{N}_{\star}\big(\cvWstar^{k}\big) \cvSigmastar^{k+1}=-\Big[\diffw\left(\mathcal{N}_{\star}\big(\cvWstar^{k}\big)\right)\cvSigmastar^{k}+C\Big]\cvWstar^{k+1}+\left(\diffw\left(\mathcal{N}_{\star}\big(\cvWstar^{k}\big)\right)\cvSigmastar^{k}\right)\cvWstar^{k},\\[0.3cm]
&-C^T \cvSigmastar^{k+1}+\Big[\varepsilon A_{\LDG}+\frac{1}{\tau_{n + 1}}\diffw\left(\mathcal{U}_{\star}(\cvWstar^{k})\right)+J\Big]\cvWstar^{k+1}\\
&\qquad\qquad\qquad -\diffwp\left(\mathcal{F}_{\star} \big(\cvWp^{k}, \cvWq^{k})\right)\cvWp^{k+1}-\diffwq\left(\mathcal{F}_{\star} \big(\cvWp^{k}, \cvWq^{k})\right)\cvWq^{k+1}\\
&\qquad
=
\frac{1}{\tau_{n + 1}}\diffw\left(\mathcal{U}_{\star}(\cvWstar^{k})\right)\cvWstar^{k}-\diffwp\left(\mathcal{F}_{\star} \big(\cvWp^{k}, \cvWq^{k})\right)\cvWp^{k}-\diffwq\left(\mathcal{F}_{\star} \big(\cvWp^{k}, \cvWq^{k})\right)\cvWq^{k}\\
&\qquad\qquad\qquad -\frac{1}{\tau_{n + 1}}\Big[\mathcal{U}_{\star}(\cvWstar^{k})-\mathcal{U}_{\star, h}^{(n)}\Big]
+ \mathcal{F}_{\star} \big(\cvWp^{k}, \cvWq^{k}) .
\end{split}
\end{equation}

\begin{remark}[Newton's iteration for~\eqref{EQ::MATRICIAL_W}]
Using~\eqref{EQ::MATRICIAL_COMPACT_SIGMA}, one can eliminate~$\cvSigmastar^{k}$ from~\eqref{EQ::NEWTON} and obtain, for~$\star=p,q$,
\begin{equation}\label{EQ::NEWTON_ONEFIELD}
\begin{split}
&\Big[\varepsilon A_{\LDG}+\frac{1}{\tau_{n + 1}}\diffw\left(\mathcal{U}_{\star}(\cvWstar^{k})\right)
+C^T\left(\mathcal{N}_{\star}\big(\cvWstar^{k}\big)\right)^{-1}C-\MMstar\cvWstar^{k}
+J\Big]\cvWstar^{k+1}\\
&\qquad\qquad\qquad -\diffwp\left(\mathcal{F}_{\star} \big(\cvWp^{k}, \cvWq^{k})\right)\cvWp^{k+1}-\diffwq\left(\mathcal{F}_{\star} \big(\cvWp^{k}, \cvWq^{k})\right)\cvWq^{k+1}\\
&\qquad
=
\frac{1}{\tau_{n + 1}}\diffw\left(\mathcal{U}_{\star}(\cvWstar^{k})\right)\cvWstar^{k}-\diffwp\left(\mathcal{F}_{\star} \big(\cvWp^{k}, \cvWq^{k})\right)\cvWp^{k}-\diffwq\left(\mathcal{F}_{\star} \big(\cvWp^{k}, \cvWq^{k})\right)\cvWq^{k}\\
&\qquad\qquad\qquad -\left(\MMstar\cvWstar^{k}\right)\cvWstar^{k}-\frac{1}{\tau_{n + 1}}\Big[\mathcal{U}_{\star}(\cvWstar^{k})-\mathcal{U}_{\star, h}^{(n)}\Big]
+ \mathcal{F}_{\star} \big(\cvWp^{k}, \cvWq^{k}),
\end{split}
\end{equation}
where~$\MMstar$ is the third-order tensor defined as
\[
\MMstar\coloneqq C^T\left(\mathcal{N}_{\star}\big(\cvWstar^{k}\big)\right)^{-1}\diffw\left(\mathcal{N}_{\star}\big(\cvWstar^{k}\big)\right)\left(\mathcal{N}_{\star}\big(\cvWstar^{k}\big)\right)^{-1}C.
\]
The expression in~\eqref{EQ::NEWTON_ONEFIELD} could also be obtained by applying Newton's iteration directly to the reformulation of system~\eqref{EQ::MATRICIAL_COMPACT} given in~\eqref{EQ::MATRICIAL_W}.
\eremk
\end{remark}

\end{document}